\begin{document}
\newcommand{\fr}[2]{\frac{\;#1\;}{\;#2\;}}
\newtheorem{theorem}{Theorem}[section]
\newtheorem{lemma}{Lemma}[section]
\newtheorem{proposition}{Proposition}[section]
\newtheorem{corollary}{Corollary}[section]
\newtheorem{remark}{Remark}[section]
\newtheorem{definition}{Definition}[section]
\newtheorem{notation}{Notation}[section]
\newtheorem{example}{Example}[section]

\newcommand{\Hom}{\mathrm{Hom}\,}

\title[The Exponent of Non-Cosemisimple Hopf Algebras]{On the Exponent of Finite-Dimensional Non-Cosemisimple Hopf Algebras}

\author[K. Li and S. Zhu]{Kangqiao Li and Shenglin Zhu \\
School of Mathematical Sciences, Fudan University, Shanghai, China}  %address

\thanks{\textbf{CONTACT} Shenglin Zhu
mazhusl@fudan.edu.cn   %email address
School of Mathematical Sciences, Fudan University, No. 220, Handan Road, Shanghai, China.}   %postal address

\date{}

\begin{abstract}
In 1999, Y. Kashina introduced the  exponent of a Hopf algebra. In this paper, we prove that the exponent of a finite dimensional non-cosemisimple Hopf algebra with Chevalley property in characteristic 0 is infinite, and the exponent of a finite dimensional non-cosemisimple pointed Hopf algebra in positive characteristic is finite.
\end{abstract}

\maketitle

\textbf{KEYWORDS}: simple coalgebras, Hopf algebras, exponent of Hopf algebras

\textbf{2000 MATHEMATICS SUBJECT CLASSIFICATION}: 16W30

\section{Introduction}

Let $(H,m,u,\Delta,\varepsilon,S)$ be a Hopf algebra over a field $\Bbbk$. We use Sweedler's  notation $\Delta(h)=\sum\limits_h h_{(1)}\otimes h_{(2)}$ for $h\in H$.

Y. Kashina \cite{Kashina 2000} defined the $n$th Hopf power as $[n]=m_n\circ\Delta_n:H\rightarrow H$, where $\Delta_n:H\rightarrow H^{\otimes n},h\mapsto \sum\limits_h h_{(1)}\otimes h_{(2)}\otimes \cdots \otimes h_{(n)}$ and $m_n:H^{\otimes n}\rightarrow H,h_1\otimes h_2\otimes \cdots \otimes h_n\mapsto h_1h_2\cdots h_n$, for all $h,\ h_1,h_2,\cdots,h_n\in H$. The exponent of $H$ is the least positive integer $n$ such that $[n]=u\circ \varepsilon$. We denote the exponent of $H$ by $\exp(H)$ if such an $n$ exists, otherwise we write $\exp(H)=\infty$. For a semisimple and cosemisimple Hopf algebra $H$, Y. Kashina \cite{Kashina 1999} conjectured that ${\rm exp}(H)$ is finite and divides the dimension of $H$.

In 1999, P. Etingof and S. Gelaki \cite{E-G 1999} used the notion of exponent of $H$ to stand for the least positive integer $n$ such that $ \sum\limits_h h_{(1)}\, S^{-2}\left( h_{(2)}\right)\, \cdots  \,S^{-2n+2}\left( h_{(n)}\right) =\varepsilon(h)1_H$ for all $h\in H$. For a semisimple and cosemisimple Hopf algebra $H$, the two notions of exponent are the same, and the exponent of $H$ divides  $\left(\dim_{\Bbbk} H\right)^3$. In 2006, Y. Kashina, Y. Sommerh\"{a}user, Y. Zhu \cite{K-S-Z 2006} proved that any prime number dividing $\dim_{\Bbbk} H$ also divides $\exp(H)$.

In this paper, we study the exponent of non-cosemisimple Hopf algebras. We prove in Section 4 that over a field of characteristic $0$, the exponent of a non-cosemisimple Hopf algebra with Chevalley property is $\infty$. When the characteristic of the base field is positive, we prove in Section 5 that the exponent of a finite-dimensional non-cosemisimple pointed Hopf algebra $H$ is  finite.

\section{Coradical Orthonormal Idempotents in $H^\ast$}
Throughout this paper, we let ${\Bbbk}$ be a field, $(H,\Delta,\varepsilon)$ be a coalgebra over ${\Bbbk}$, and $H^\ast$ be the dual (algebra) of $H$. Denote the the coradical filtration of $H$ by $\{H_n\}_{n=0}^\infty$, and the set of all simple subcoalgebras of $H$ by $\mathcal{S}$, then we have $H_0=\bigoplus\limits_{C\in \mathcal{S}} C$.

\begin{definition}[\protect{\cite[Definition 3.5.12]{Radford 2012}}]
Let $H$ be a coalgebra. An orthonormal family of idempotents of $H^\ast$ is a family of non-zero idempotents $\{e_i\}_{i\in \mathcal{I}}$ of $H^\ast$ such that $e_i e_j=0$ whenever $i,j\in\mathcal{I}$ are different and $\sum\limits_{i\in \mathcal{I}} e_i=\varepsilon$.
\end{definition}

\begin{remark}
Assume that a family of idempotents $\{e_i\}_{i\in \mathcal{I}}$ of $H^\ast$ satisfying the condition that $e_i e_j=0$ whenever $i,j\in\mathcal{I}$ are different. Then $\sum\limits_{i\in \mathcal{I}} e_i$ is a well-defined idempotent since $H$ is locally finite by the fundamental theorem of coalgebras.
\end{remark}

D.E. Radford affirmed the existence of an orthonormal family of idempotents $\{e_C\}_{C\in \mathcal{S}}\subseteq H^\ast$ on every coalgebra $H$ such that the natural actions (both left and right) of $H^*$ on $H_0$  by each $e_C$ is exactly the ${\Bbbk}$-projection from $H_0$ onto $C$.

\begin{proposition}[\protect{\cite[Definition 3.5.15]{Radford 2012}}]\label{1}
Let $H$ be a coalgebra. Let $H_0=\bigoplus\limits_{C\in \mathcal{S}} C$ be the coradical of $H$. Then there exists an orthonormal family of idempotents $\{e_C\}_{C\in\mathcal{S}}$ of $H^\ast$ such that $e_C|_D=\delta_{C,D}\varepsilon|_D$, where $\delta$ is the Kronecker delta.
\end{proposition}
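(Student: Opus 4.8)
The plan is to obtain $\{e_C\}_{C\in\mathcal S}$ as a lift, through the restriction map $\rho\colon H^\ast\to H_0^\ast$, $f\mapsto f|_{H_0}$, of the canonical orthogonal idempotents of $H_0^\ast$. As each simple subcoalgebra is finite-dimensional and $H_0=\bigoplus_{C\in\mathcal S}C$, there is an algebra isomorphism $H_0^\ast\cong\prod_{C\in\mathcal S}C^\ast$, with each $C^\ast$ a finite-dimensional simple algebra whose unit is $\varepsilon|_C$. Let $\bar e_C\in H_0^\ast$ be the idempotent equal to $\varepsilon|_C$ on the $C$-th factor and $0$ on all other factors; the $\bar e_C$ are pairwise orthogonal, and a family $\{e_C\}\subseteq H^\ast$ satisfies $e_C|_D=\delta_{C,D}\varepsilon|_D$ for all $C,D\in\mathcal S$ exactly when $\rho(e_C)=\bar e_C$ for all $C$. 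So it suffices to produce pairwise orthogonal idempotents $e_C\in H^\ast$ with $\rho(e_C)=\bar e_C$; the remaining requirement $\sum_C e_C=\varepsilon$ will then follow automatically.

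I record the structural facts to be used. The map $\rho$ is a surjective algebra homomorphism (multiplicative because $H_0$ is a subcoalgebra, surjective because $\Bbbk$ is a field) with kernel $H_0^\perp$. From the convolution formula and the estimate $\Delta(H_m)\subseteq\sum_{i=0}^m H_i\otimes H_{m-i}$, a pigeonhole argument gives $H_j^\perp\,H_k^\perp\subseteq H_{j+k+1}^\perp$ for all $j,k\ge 0$; in particular $(H_0^\perp)^{n+1}\subseteq H_n^\perp$ and $(H_n^\perp)^2\subseteq H_{n+1}^\perp$. Since $H=\bigcup_n H_n$ we get $\bigcap_n H_n^\perp=\big(\bigcup_n H_n\big)^\perp=0$, and the canonical map $H^\ast\to\varprojlim_n H^\ast/H_n^\perp\cong\varprojlim_n H_n^\ast$ is an isomorphism. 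Writing $A_n:=H_n^\ast=H^\ast/H_n^\perp$, we thus have surjections $A_{n+1}\twoheadrightarrow A_n$ with square-zero kernel $N_n:=H_n^\perp/H_{n+1}^\perp$, the composite surjection $A_n\twoheadrightarrow A_0=H_0^\ast$ has nilpotent kernel, and $H^\ast=\varprojlim_n A_n$.

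The lift is built along the tower $\{A_n\}$. I construct inductively a pairwise orthogonal family $\{e_C^{(n)}\}_{C\in\mathcal S}$ of idempotents of $A_n$ lying over $\bar e_C$, compatibly along $A_{n+1}\to A_n$, and then put $e_C:=(e_C^{(n)})_n\in\varprojlim_n A_n=H^\ast$. Take $e_C^{(0)}:=\bar e_C$. For the step from $n$ to $n+1$, write $g_C:=e_C^{(n)}$ and let $\pi\colon A_{n+1}\to A_n$; first choose, for each $C$, an idempotent $f_C\in A_{n+1}$ with $\pi(f_C)=g_C$ (a single idempotent always lifts along a square-zero surjection, e.g.\ $3x^2-2x^3$ for any linear lift $x$ of $g_C$). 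Two observations allow one to correct this family to an orthogonal one. First, for every $h\in H_{n+1}$ one has $f_C(h)=0$ for all but finitely many $C$: a finite-dimensional subcoalgebra $L\subseteq H_{n+1}$ containing $h$ has $L_0=\bigoplus_{C\in\mathcal S_L}C$ with $\mathcal S_L\subseteq\mathcal S$ finite, and for $C\notin\mathcal S_L$ the image of $f_C$ in $L^\ast$ is an idempotent lying in the Jacobson radical $J(L^\ast)=L_0^\perp$, hence is $0$. Second, for $C\neq D$ the element $\rho_{CD}:=f_Cf_D$ lies in $g_C\,N_n\,g_D\subseteq N_n$, so by the first observation $p_C:=-\sum_{D\neq C}\rho_{CD}$ is a well-defined element of $g_C N_n(1-g_C)\subseteq N_n$. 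A short computation with $N_n^2=0$ then shows that $e_C^{(n+1)}:=f_C+p_C$ is idempotent, lies over $g_C$, and satisfies $e_C^{(n+1)}e_D^{(n+1)}=0$ for $C\neq D$, completing the induction.

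It remains to check $\sum_C e_C=\varepsilon$, so that $\{e_C\}$ is an orthonormal family of idempotents; this is automatic. Given $h\in H$, pick a finite-dimensional subcoalgebra $K\ni h$ with $K_0=\bigoplus_{C\in\mathcal S_K}C$; exactly as above, for $C\notin\mathcal S_K$ the image of $e_C$ in $K^\ast$ is an idempotent in $J(K^\ast)$, hence $0$, so $\sum_C e_C(h)=\sum_{C\in\mathcal S_K}e_C(h)$, while $\sum_{C\in\mathcal S_K}e_C$ maps in $K^\ast$ to an idempotent congruent to $1_{K^\ast}=\varepsilon|_K$ modulo the nilpotent ideal $J(K^\ast)$, hence equals $\varepsilon|_K$; evaluating at $h$ gives $\sum_C e_C(h)=\varepsilon(h)$. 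The step I expect to be the main obstacle is the correction in the induction: checking that $p_C$ is well-defined in spite of $\mathcal S$ being possibly infinite (this is precisely the first observation, which rests on a nilpotent idempotent in a finite-dimensional algebra being zero) and that the corrected idempotents are genuinely pairwise orthogonal; the rest is routine manipulation of the coradical filtration and of inverse limits. Alternatively, one can replace the explicit correction by the idempotent-lifting theorem for pseudocompact (pro-Artinian) algebras applied to $H^\ast$ with its Jacobson radical $H_0^\perp$.
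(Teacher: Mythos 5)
The paper offers no proof of this proposition: it is imported verbatim from Radford's book (\cite[Definition 3.5.15]{Radford 2012}), so there is no in-paper argument to compare yours against. Your proof is correct and is essentially the standard construction underlying Radford's statement: identify $H^\ast$ with $\varprojlim_n H_n^\ast$ via $H_j^\perp H_k^\perp\subseteq H_{j+k+1}^\perp$, lift the canonical orthogonal idempotents of $H_0^\ast\cong\prod_{C}C^\ast$ through the square-zero surjections $H_{n+1}^\ast\twoheadrightarrow H_n^\ast$, and use local finiteness (an idempotent of $L^\ast$ lying in $L_0^\perp=J(L^\ast)$ for $L$ finite-dimensional is zero) to make the orthogonalizing correction $e_C^{(n+1)}=f_C-\sum_{D\neq C}f_Cf_D$ and the identity $\sum_C e_C=\varepsilon$ well defined; I checked the key computations (idempotency and pairwise orthogonality of the corrected lifts modulo $N_n^2=0$, and that an idempotent congruent to $1$ modulo a nilpotent ideal equals $1$) and the argument is complete.
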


In this paper, an orthonormal family of idempotents $\{e_C\}_{C\in\mathcal{S}}\subseteq H^\ast$ satisfying the property in the previous proposition will be called \emph{(a family of) coradical orthonormal idempotents} of $H^\ast$. It's not unique in general. For any family of coradical orthonormal idempotents $\{e_C\}_{C\in\mathcal{S}}$, we use for convenience the notations below
$${^C}h=h\leftharpoonup e_C,h^D=e_D\rightharpoonup h,{^C}h^D=e_D\rightharpoonup h\leftharpoonup e_C,\ \forall~h\in H,\forall~C,D \in \mathcal{S}.$$
Specially if $C={\Bbbk} g$ is pointed, we write ${}^gh$ as ${}^{C}h$, $h^g$ as $h^{C}$, and  $V^g=V^{{\Bbbk} g}=e_C\rightharpoonup V$, et..

\begin{proposition}\label{2}
Let $H$ be a coalgebra and $\{e_C\}_{C\in\mathcal{S}}\subseteq H^\ast$ be a family of coradical orthonormal idempotents. Then for all $C,D\in\mathcal{S}$, we have
\begin{enumerate}
\item ${}^C H_0{}^D=\delta_{C,D}C$.
\item ${}^C H_1{}^D \subseteq \Delta^{-1}(C\otimes {}^C H_1{}^D + {}^C H_1{}^D \otimes D)$.
\item ${}^C H_1{}^D \subseteq {\rm Ker}~\varepsilon$ if $C\neq D$.
\item For any ${\Bbbk}$-subspace $V\subseteq H$, we have the direct-sum decomposition
    $V=\bigoplus\limits_{C\in\mathcal{S}}{}^C V=\bigoplus\limits_{C\in\mathcal{S}} V^D=\bigoplus\limits_{C,D\in\mathcal{S}} {}^C V^D$.
\end{enumerate}
\end{proposition}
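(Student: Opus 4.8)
The plan is to treat the four parts separately; (1), (3), (4) are routine, and only (2) needs real work, which in turn uses (1). For (1): each $E\in\mathcal S$ is a simple subcoalgebra, so $\Delta(E)\subseteq E\otimes E$ and $e_C|_E=\delta_{C,E}\varepsilon|_E$; a one-line computation then gives $E\leftharpoonup e_C=\delta_{C,E}E$ and $e_D\rightharpoonup E=\delta_{D,E}E$, so ${}^CE^D=\delta_{C,E}\delta_{D,E}E$, and summing over $E$ in $H_0=\bigoplus_E E$ yields ${}^CH_0{}^D=\delta_{C,D}C$; in particular ${}^CH_0=C$ and $H_0{}^D=D$. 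For (3): if $x\in{}^CH^D$ then $x=e_D\rightharpoonup x\leftharpoonup e_C$, and since $\varepsilon(h\leftharpoonup f)=\varepsilon(f\rightharpoonup h)=f(h)$ for all $h,f$, we get $\varepsilon(x)=e_D(x\leftharpoonup e_C)=(e_Ce_D)(x)=\delta_{C,D}e_C(x)$, which is $0$ when $C\neq D$. For (4): $h\mapsto{}^Ch$ and $h\mapsto h^D$ are the right action of $e_C$ and the left action of $e_D$ on $H$; since $\{e_C\}_{C\in\mathcal S}$ is orthonormal and the left and right $H^\ast$-actions commute, $\{h\mapsto{}^Ch^D\}_{(C,D)}$ is a family of pairwise orthogonal idempotent endomorphisms of $H$ with sum $\mathrm{id}_H$ (a well-defined sum, as in the Remark, by local finiteness), so $H=\bigoplus_{C,D}{}^CH^D$; for a subspace $V$, every $v\in V$ expands finitely as $\sum_{C,D}{}^Cv^D$ with ${}^Cv^D\in{}^CV^D$, and directness is inherited from $H$, so $V=\bigoplus_{C,D}{}^CV^D$, and likewise $V=\bigoplus_C{}^CV=\bigoplus_D V^D$.

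Now the heart of the matter, part (2). Fix $x\in{}^CH_1{}^D$; then $x\leftharpoonup e_C=x=e_D\rightharpoonup x$, and since $H_1=H_0\wedge H_0=\Delta^{-1}(H_0\otimes H+H\otimes H_0)$ we have $\Delta(x)\in H_0\otimes H+H\otimes H_0$. First I would invoke the compatibilities $\Delta(h\leftharpoonup e_C)=\sum(h_{(1)}\leftharpoonup e_C)\otimes h_{(2)}$ and $\Delta(e_D\rightharpoonup h)=\sum h_{(1)}\otimes(e_D\rightharpoonup h_{(2)})$ to obtain $\Delta(x)\in{}^CH\otimes H^D$. Then I would ``insert $\varepsilon=\sum_{E\in\mathcal S}e_E$ in the middle'': with $\theta_E(x):=(\mathrm{id}\otimes e_E\otimes\mathrm{id})\Delta_3(x)$ one has $\Delta(x)=\sum_E\theta_E(x)$, and since $\theta_E(x)$ equals both $(e_E\rightharpoonup(\cdot)\otimes\mathrm{id})\Delta(x)$ and $(\mathrm{id}\otimes(\cdot)\leftharpoonup e_E)\Delta(x)$ --- by the identity $\sum(e_E\rightharpoonup h_{(1)})\otimes h_{(2)}=\sum h_{(1)}\otimes(h_{(2)}\leftharpoonup e_E)$ --- one reads off at once that $\theta_E(x)\in{}^CH^E\otimes{}^EH^D$ and that $\theta_E(x)\in H_0\otimes H+H\otimes H_0$ (the latter because $H_0$, being a subcoalgebra, is stable under the hit actions). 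Intersecting these two memberships, via $({}^CH^E\otimes{}^EH^D)\cap(H_0\otimes H+H\otimes H_0)=({}^CH^E\cap H_0)\otimes{}^EH^D+{}^CH^E\otimes({}^EH^D\cap H_0)$ (the kernel-of-tensor-product identity for the quotient map $H\to H/H_0$) together with part (1) in the form ${}^CH^E\cap H_0={}^CH_0{}^E=\delta_{C,E}C$ (symmetrically on the right leg), kills $\theta_E(x)$ whenever $E\notin\{C,D\}$ and leaves $\Delta(x)=\theta_C(x)+\theta_D(x)\in C\otimes{}^CH^D+{}^CH^D\otimes D$.

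It then remains to sharpen ${}^CH^D$ to ${}^CH_1{}^D$ in both slots. Writing $\theta_C(x)=\sum_i c_i\otimes y_i$ with $\{c_i\}\subseteq C$ linearly independent and $y_i\in{}^CH^D$, I would note that for every $\phi\in H^\ast$ the element $(\phi\otimes\mathrm{id})\Delta(x)=x\leftharpoonup\phi$ lies in $H_1$ (because $\Delta(H_1)\subseteq H_0\otimes H_1+H_1\otimes H_0$ makes $H_1$ a right $H^\ast$-submodule), while $(\phi\otimes\mathrm{id})\theta_D(x)\in D\subseteq H_0\subseteq H_1$; hence $\sum_i\phi(c_i)y_i\in H_1$ for all $\phi$, and choosing $\phi$ dual to the $c_i$ forces each $y_i\in{}^CH^D\cap H_1={}^CH_1{}^D$ (the equality holding since ${}^CH^D$ is the fixed-point set of $h\mapsto e_D\rightharpoonup h\leftharpoonup e_C$ and $H_1$ is stable under $e_D\rightharpoonup$ and $\leftharpoonup e_C$). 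Thus $\theta_C(x)\in C\otimes{}^CH_1{}^D$, and the mirror-image argument gives $\theta_D(x)\in{}^CH_1{}^D\otimes D$, which is the claim.

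The step I expect to be the real obstacle is the middle of (2): producing the matched-index decomposition $\Delta(x)\in\bigoplus_E{}^CH^E\otimes{}^EH^D$ and then extracting from $x\in H_1$ that only the $E=C$ and $E=D$ terms survive, in the stated shape. That requires careful bookkeeping with the $H^\ast$-bimodule structure on $H$, the coradical filtration, and the matter of which of $C$ and $D$ ends up on which tensor leg --- so I would fix the conventions for $\leftharpoonup,\rightharpoonup$ at the outset (here $h\leftharpoonup h^\ast=\sum\langle h^\ast,h_{(1)}\rangle h_{(2)}$ and $h^\ast\rightharpoonup h=\sum h_{(1)}\langle h^\ast,h_{(2)}\rangle$, as in Radford's book). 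The rest of the argument is formal.
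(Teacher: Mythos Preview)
The paper states this proposition without proof, so there is nothing to compare against directly. Your arguments for (1) and (3) are correct and routine. Your argument for (2) is essentially correct, with one oversight: in the case $C=D$ you write $\Delta(x)=\theta_C(x)+\theta_D(x)$ (which double-counts) and then assume $\theta_C(x)\in C\otimes{}^CH^D$ for the sharpening step, whereas your intersection argument only yields $\theta_C(x)\in C\otimes{}^CH^C+{}^CH^C\otimes C$ when $C=D$. The repair is immediate --- either handle $C=D$ separately, or, more cleanly, begin from the coalgebra-filtration identity $\Delta(H_1)\subseteq H_0\otimes H_1+H_1\otimes H_0$ rather than $H_0\otimes H+H\otimes H_0$; your projection by $\theta_E$ then lands directly in $\delta_{C,E}\,C\otimes{}^EH_1{}^D+{}^CH_1{}^E\otimes\delta_{E,D}\,D$, and no separate sharpening step is needed.

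For (4) there is a genuine gap, and in fact the statement as printed is false for an arbitrary subspace: in $H=\Bbbk[\mathbb{Z}/2\mathbb{Z}]$ with $V=\Bbbk(1+g)$ one computes ${}^1V=\Bbbk 1$ and ${}^gV=\Bbbk g$, so $\sum_C{}^CV=H\supsetneq V$. Your argument establishes $V\subseteq\sum_{C,D}{}^CV^D$ and directness (inherited from $H$), but not the reverse inclusion ${}^CV^D\subseteq V$, which can fail. The correct hypothesis is that $V$ be stable under both hit actions (for instance $V$ a subcoalgebra, or $V=H_n$, or $V=H_n{}^D$); under that assumption your argument goes through, and this is the only situation in which the paper actually invokes (4).
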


\section{Multiplicative Matrices and Primitive Matrices}

In 1988, Y. Manin \cite{Manin 1988} introduced the concept ``multiplicative matrix'' for  quantum group constructions. We first recall Manin's notations on matrices over ${\Bbbk}$-vector spaces. For positive integers $r$ and $s$, we use $\mathcal{M}_{r\times s}(V)$ to denote the set of all $r\times s$ matrices over a vector space $V$. If $r=s$, we write $\mathcal{M}_r(V)=\mathcal{M}_{r\times r}(V)$.

\begin{notation}[\protect{\cite[Section 2.6]{Manin 1988}}]
Let $V,W$ be ${\Bbbk}$-vector spaces. Define a bilinear map
$$\widetilde{\otimes}:\mathcal{M}_{r\times s}(V)\otimes\mathcal{M}_{s\times t}(W)\rightarrow\mathcal{M}_{r\times t}(V\otimes W),
(v_{ij})\otimes(w_{kl})\mapsto \left( \sum\limits_{k=1}^s v_{ik} \otimes w_{kj} \right).$$
\end{notation}

\begin{remark}
$\widetilde{\otimes}$ is not the usual tensor product of $\mathcal{M}_{r\times s}(V)$ and $\mathcal{M}_{s\times t}(W)$. However, the associative law
$\widetilde{\otimes}\circ(\widetilde{\otimes}\otimes id)=\widetilde{\otimes}\circ(id\otimes\widetilde{\otimes})$ on
$\mathcal{M}_{r\times s}(U)\otimes\mathcal{M}_{s\times t}(V)\otimes\mathcal{M}_{t\times u}(W)$ holds.
\end{remark}

In order to calculate counits and coproducts of all the entries in a matrix over a coalgebra, we use the following notations.

\begin{notation}[\protect{\cite[Section 2.6]{Manin 1988}}] Let $V,\ W$ be any ${\Bbbk}$-vector spaces,and let $f\in  {\Hom}_{{\Bbbk}}(V,W)$, we use the same symbol $f$ to denote the linear map
$f:\mathcal{M}_{r\times s}(V)\rightarrow \mathcal{M}_{r\times s}(W),(v_{ij})\mapsto \left(  f(v_{ij}) \right)$. Specifically,
\begin{enumerate}
\item If $(H,\Delta,\varepsilon)$ is a coalgebra, then
    \begin{eqnarray*}
    \Delta&:&\mathcal{M}_{r\times s}(H)\rightarrow\mathcal{M}_{r\times s}(H\otimes H),(h_{ij})\mapsto\left(\Delta(h_{ij})\right) \\
    \varepsilon&:&\mathcal{M}_{r\times s}(H)\rightarrow\mathcal{M}_{r\times s}({\Bbbk}),(h_{ij})\mapsto\left(\varepsilon(h_{ij})\right).
    \end{eqnarray*}
\item If $(H,m,u)$ is an algebra, then
    $$m:\mathcal{M}_{r\times s}(H\otimes H)\rightarrow\mathcal{M}_{r\times s}(H),(h_{ij}\otimes h_{ij}{}^\prime)\mapsto(h_{ij} h_{ij}{}^\prime).$$
\item If $H$ is Hopf algebra with antipode $S$, then
    $$S:\mathcal{M}_{r\times s}(H)\rightarrow\mathcal{M}_{r\times s}(H),(h_{ij})\mapsto\left(S(h_{ij})\right).$$
\end{enumerate}
\end{notation}

The following proposition is immediate.

\begin{proposition}\label{6}
\begin{enumerate}
\item Let $(H,\Delta,\varepsilon)$ be a coalgebra, then the equalities $(\Delta\otimes id)\circ \Delta=(id\otimes\Delta)\circ \Delta$ and $(\varepsilon\otimes id)\circ \Delta=(id\otimes\varepsilon)\circ \Delta$ hold on $\mathcal{M}_{r\times s}(H)$.
\item Let $(H,m,u)$ be an algebra, then $m\circ\widetilde{\otimes}$ on $\mathcal{M}_{r\times s}(H)\otimes \mathcal{M}_{s\times t}(H)$ is exactly the matrix multiplication $\mathcal{M}_{r\times s}(H)\otimes \mathcal{M}_{s\times t}(H)\to \mathcal{M}_{r\times t}(H)$.
\end{enumerate}
\end{proposition}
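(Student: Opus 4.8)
The plan is simply to unwind the definitions: every map occurring here acts entry-by-entry on matrices (by the notation convention for $f\colon\mathcal{M}_{r\times s}(V)\to\mathcal{M}_{r\times s}(W)$), and $\widetilde{\otimes}$ is defined by an explicit formula on entries, so each asserted identity reduces to the corresponding axiom applied to individual entries of the matrix.

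For part (1), I would start from an arbitrary matrix $A=(h_{ij})\in\mathcal{M}_{r\times s}(H)$. By definition $\Delta(A)$ is the matrix with $(i,j)$-entry $\Delta(h_{ij})\in H\otimes H$, and applying $\Delta\otimes id$ (again entry-wise) the $(i,j)$-entry of $(\Delta\otimes id)(\Delta(A))$ is $(\Delta\otimes id)(\Delta(h_{ij}))$, while that of $(id\otimes\Delta)(\Delta(A))$ is $(id\otimes\Delta)(\Delta(h_{ij}))$. These two elements of $H^{\otimes 3}$ agree by coassociativity of $\Delta$ on $H$, so the two matrices coincide. The counit identity is handled the same way: the $(i,j)$-entries of $(\varepsilon\otimes id)(\Delta(A))$ and of $(id\otimes\varepsilon)(\Delta(A))$ are both, after the canonical identifications ${\Bbbk}\otimes H\cong H\cong H\otimes{\Bbbk}$, equal to $h_{ij}$ by the counit axiom in $H$.

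For part (2), I would take $A=(v_{ij})\in\mathcal{M}_{r\times s}(H)$ and $B=(w_{kl})\in\mathcal{M}_{s\times t}(H)$. By the definition of $\widetilde{\otimes}$ the $(i,j)$-entry of $A\,\widetilde{\otimes}\,B\in\mathcal{M}_{r\times t}(H\otimes H)$ is $\sum_{k=1}^{s}v_{ik}\otimes w_{kj}$; applying $m$ entry-wise, the $(i,j)$-entry of $m(A\,\widetilde{\otimes}\,B)$ is $\sum_{k=1}^{s}v_{ik}w_{kj}$, which is precisely the $(i,j)$-entry of the ordinary matrix product $AB$. Hence $m\circ\widetilde{\otimes}$ is matrix multiplication.

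There is no real obstacle here, as the statement ``is immediate'' indicates; the only points requiring a little care are keeping track of the identifications ${\Bbbk}\otimes H\cong H\cong H\otimes{\Bbbk}$ when phrasing the counit identity for matrices, and observing at the outset that each entry-wise map ($\Delta$, $\varepsilon$, $m$) commutes with the operation of extracting the $(i,j)$-entry, which is true by its very definition.
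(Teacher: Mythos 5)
Your proof is correct and is exactly the entry-wise verification the paper has in mind: the paper states this proposition without proof, calling it ``immediate,'' and your unwinding of the definitions (each map acting entry-by-entry, plus coassociativity, the counit axiom, and the formula for $\widetilde{\otimes}$) is the intended argument. Nothing is missing.
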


A ``multiplicative matrix'' over a coalgebra has similar properties to a group-like element.

\begin{definition}[\protect{\cite[Section 2.6]{Manin 1988}}]
Let $(H,\Delta,\varepsilon)$ be a coalgebra and $r$ be a positive integer. Let $I_r$ denote the identity matrix of order $r$ over ${\Bbbk}$. A matrix $\mathcal{G}\in \mathcal{M}_r(H)$ is called a multiplicative matrix over $H$ if $\Delta(\mathcal{G})=\mathcal{G}~\widetilde{\otimes}~\mathcal{G}$ and $\varepsilon(\mathcal{G})=I_r$.
\end{definition}

We also call a square matrix $\mathcal{G}$ over a ${\Bbbk}$-vector space $H$ with linear maps $\Delta:H\rightarrow H\otimes H$ and $\varepsilon:H\rightarrow{\Bbbk}$ \emph{multiplicative} whenever $\Delta(\mathcal{G})=\mathcal{G}~\widetilde{\otimes}~\mathcal{G}$ and $\varepsilon(\mathcal{G})=I$ hold, even if $(H,\Delta,\varepsilon)$ is not a coalgebra.

\begin{example}
Let $C$ be a simple coalgebra over an algebraically closed field ${\Bbbk}$. Then $\dim_{\Bbbk} C=r^2$ for some natural integer $r$, and there exists a basis $\{c_{ij} \mid 1\leq i,j\leq r\}$ such that  $\Delta(c_{ij})=\sum\limits_{k=1}^r c_{ik}\otimes c_{kj}$ and $\varepsilon(c_{ij})=\delta_{ij}$. Then the $r\times r$ matrix $\mathcal{C}=(c_{ij})$ is a multiplicative matrix over $C$.
\end{example}

A multiplicative matrix $\mathcal{C}$ over a simple coalgebra $C$ is called a \emph{basic multiplicative matrix} of $C$ if all the entries of $\mathcal{C}$ form a basis of $C$. It is not unique in general.

Now we give a description of the left integral of a Hopf algebra with basic multiplicative matrices.

\begin{proposition}[cf. \protect{\cite[Proposition 9]{L-S 1969}}]\label{8}
Let $H$ be a finite-dimensional involutory Hopf algebra. Define $\Lambda_0\in H$ by $\langle h^\ast,\Lambda_0\rangle={\rm Tr}(L(h^\ast))$ for all $h^\ast\in H^\ast$, where ${\rm Tr}(L(h^\ast))$ is the trace of the left multiplication
$$
L(h^\ast): H^\ast\rightarrow H^\ast,k^\ast \mapsto h^\ast k^\ast.
$$
Then
\begin{enumerate}
\item $\Lambda_0$ is a left integral of $H$.
\item Let $\{h_i\mid 1\leq i\leq n\}$ be a basis of $H$ ,and let $\{h_i^\ast \mid 1\leq i\leq n\}$ be the dual basis of $H^\ast$. Then $\Lambda_0=\sum\limits_{i=1}^n h_i^\ast\rightharpoonup h_i$.
\item Assume that ${\Bbbk}$ is algebraically closed and $H$ is cosemisimple. For each  simple subcoalgebra $D\in \mathcal{S}$ with dimension $r_D{}^2$, choose a basic multiplicative matrix $(d_{ij}^D)_{i,j=1}^{r_D}$, and denote $t_D=\sum\limits_{i=1}^{r_D} d_{ii}^D$. Then we have $\Lambda_0=1+\sum\limits_{D\in\mathcal{S}\setminus\{{\Bbbk} 1\}} r_D t_D$.
\end{enumerate}
\end{proposition}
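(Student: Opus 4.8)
The plan is to read off (2) from a direct computation of $\mathrm{Tr}(L(h^\ast))$, and then to deduce (1) and (3) from it. For (2) I would fix a basis $\{h_i\}$ of $H$ with dual basis $\{h_i^\ast\}$: the matrix of $L(h^\ast)$ in this basis has $(i,j)$-entry $\langle h^\ast h_j^\ast, h_i\rangle$, so $\mathrm{Tr}(L(h^\ast)) = \sum_i\langle h^\ast h_i^\ast, h_i\rangle$. Expanding $\langle h^\ast h_i^\ast, h_i\rangle$ through the coproduct of $h_i$, recognizing the result as $\langle h^\ast, h_i^\ast\rightharpoonup h_i\rangle$, and summing over $i$, one gets $\langle h^\ast, \Lambda_0\rangle = \langle h^\ast, \sum_i h_i^\ast\rightharpoonup h_i\rangle$ for all $h^\ast\in H^\ast$, which is (2). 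In particular this exhibits $\Lambda_0$ as the image of $\mathrm{id}_H\in\mathrm{End}_{\Bbbk}H\cong H^\ast\otimes H$ under $f\otimes h\mapsto f\rightharpoonup h$, hence independent of the chosen basis; note also that finite-dimensionality is what makes $L(h^\ast)$ an endomorphism of the finite-dimensional space $H^\ast$.

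For (1) I would show $h\Lambda_0 = \varepsilon(h)\Lambda_0$, which (as $H$ is finite-dimensional) is equivalent to $\langle h^\ast, h\Lambda_0\rangle = \varepsilon(h)\langle h^\ast, \Lambda_0\rangle$ for all $h^\ast, h$. Introduce, for each $h\in H$, the operator $T_h\in\mathrm{End}_{\Bbbk}(H^\ast)$ with $\langle T_h(k^\ast), x\rangle = \langle k^\ast, hx\rangle$; it satisfies $T_{h'}\circ T_h = T_{hh'}$, $T_{1_H}=\mathrm{id}$, and the measuring law $T_h(fg)=\sum_h T_{h_{(1)}}(f)T_{h_{(2)}}(g)$. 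By the definition of $\Lambda_0$, $\langle h^\ast, h\Lambda_0\rangle = \langle T_h(h^\ast), \Lambda_0\rangle = \mathrm{Tr}\big(L(T_h(h^\ast))\big)$, so it remains to prove $\mathrm{Tr}(L(T_h(h^\ast))) = \varepsilon(h)\,\mathrm{Tr}(L(h^\ast))$. The crux — and the step I expect to be the main obstacle — is the operator identity
\[
L\big(T_h(h^\ast)\big) \;=\; \sum_h T_{h_{(1)}}\circ L(h^\ast)\circ T_{S(h_{(2)})} \qquad\text{in }\mathrm{End}_{\Bbbk}(H^\ast):
\]
evaluated on a functional $k^\ast$, the measuring law turns the right-hand side into $\sum_h T_{h_{(1)}}(h^\ast)\,T_{S(h_{(3)})h_{(2)}}(k^\ast)$, and the identity $\sum S(y_{(2)})y_{(1)}=\varepsilon(y)1_H$ — valid here, and only here, because $S^{-1}=S$ — collapses the $S(h_{(3)})h_{(2)}$-tail, leaving $T_h(h^\ast)k^\ast=L(T_h(h^\ast))(k^\ast)$. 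Taking traces, cyclicity together with $T_{S(h_{(2)})}\circ T_{h_{(1)}}=T_{h_{(1)}S(h_{(2)})}$ and the antipode axiom $\sum_h h_{(1)}S(h_{(2)})=\varepsilon(h)1_H$ give exactly $\mathrm{Tr}(L(T_h(h^\ast)))=\varepsilon(h)\mathrm{Tr}(L(h^\ast))$, proving (1). The involutory hypothesis is genuinely needed: $\varepsilon(\Lambda_0)=\mathrm{Tr}(L(\varepsilon))=\dim_{\Bbbk}H$, which for a non-semisimple Hopf algebra cannot be the counit of a left integral — so (1) already fails for the Taft algebras.

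For (3), cosemisimplicity of $H$ over the algebraically closed field $\Bbbk$ gives $H=H_0=\bigoplus_{D\in\mathcal S}D$ with each $D$ a matrix coalgebra as in the Example, so collecting the entries $\{d_{ij}^D\}$ of the chosen basic multiplicative matrices over all $D$ produces a basis of $H$. Feeding this basis into (2) and using $\Delta(d_{ij}^D)=\sum_k d_{ik}^D\otimes d_{kj}^D$, one finds $(d_{ij}^D)^\ast\rightharpoonup d_{ij}^D=d_{ii}^D$, independent of $j$; hence $\sum_{i,j}(d_{ij}^D)^\ast\rightharpoonup d_{ij}^D=r_D\sum_i d_{ii}^D=r_D t_D$, and summing over $D\in\mathcal S$ gives $\Lambda_0=\sum_{D\in\mathcal S}r_D t_D$. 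Separating off $\Bbbk 1$, whose basic multiplicative matrix is $(1_H)$ and for which $r_{\Bbbk 1}=1$, so that $r_{\Bbbk 1}t_{\Bbbk 1}=1_H$, yields the asserted formula $\Lambda_0=1+\sum_{D\in\mathcal S\setminus\{\Bbbk 1\}}r_D t_D$. As a consistency check, $\varepsilon(\Lambda_0)=1+\sum_{D\neq\Bbbk 1}r_D^2=\dim_{\Bbbk}H\neq 0$, which agrees with (1) and recovers the semisimplicity of $H$ whenever $\mathrm{char}\,\Bbbk\nmid\dim_{\Bbbk}H$.
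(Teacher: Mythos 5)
Your proof is correct and, for parts (2) and (3), coincides with the paper's: the same trace computation $\mathrm{Tr}(L(h^\ast))=\sum_i\langle h^\ast h_i^\ast,h_i\rangle=\langle h^\ast,\sum_i h_i^\ast\rightharpoonup h_i\rangle$, and the same evaluation $d_{ij}^{D\ast}\rightharpoonup d_{ij}^D=d_{ii}^D$ on the basis formed by the entries of the basic multiplicative matrices. The only divergence is part (1), which the paper does not prove but simply quotes from Larson--Sweedler; your operator identity $L(T_h(h^\ast))=\sum_h T_{h_{(1)}}\circ L(h^\ast)\circ T_{S(h_{(2)})}$ combined with cyclicity of the trace is a correct reconstruction of that classical argument, and you correctly pinpoint where involutivity is used (namely the identity $\sum_h S(h_{(3)})h_{(2)}=\varepsilon(h_{(2)})1$, valid since $S^{-1}=S$).
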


\begin{proof}
Since ${\rm Tr}(L(h^\ast))=\sum\limits_{i=1}^n \langle h^\ast h_i^\ast , h_i\rangle=\langle h^\ast,\sum\limits_{i=1}^n h_i^\ast \rightharpoonup h_i\rangle$ for all $h^\ast\in H^\ast$, $\Lambda_0=\sum\limits_i h_i^\ast \rightharpoonup h_i$ and (2) holds.

Assume that ${\Bbbk}$ is algebraically closed and $H$ is cosemisimple as a coalgebra. For each simple subcoalgebra $D\in\mathcal{S}$, choose a basic multiplicative matrix $(d_{ij}^D)_{i,j=1}^{r_D}$ of it. Then $\{d_{ij}^D \mid D\in\mathcal{S},1\leq i,j\leq r_D\}$ is a basis of $H$, and let $\{d_{ij}^{D\ast} \mid D\in\mathcal{S},1\leq i,j\leq r_D\}$ be the dual basis of $H^\ast$. An obvious calculation gives $d_{ij}^{D\ast} \rightharpoonup d_{ij}^D=\sum\limits_{k=1}^{r_D} d_{ik}^D\langle d_{ij}^{D\ast} , d_{kj}^D\rangle=d_{ii}^D$. Summing them over, we obtain
$$
\Lambda_0
=\sum\limits_{D\in\mathcal{S}} \sum\limits_{i,j=1}^{r_D} d_{ij}^{D\ast} \rightharpoonup d_{ij}^D
=1+\sum\limits_{D\in\mathcal{S}\setminus\{{\Bbbk} 1\}} r_D t_D
.
$$
\end{proof}

Primitive elements play an important role in the study of pointed Hopf algebras. With multiplicative matrices, we define a similar concept.

\begin{definition}
Let $H$ be a coalgebra. For $C,D\in\mathcal{S}$, let $\mathcal{C}_{r\times r},\mathcal{D}_{s\times s}$ be basic multiplicative matrices of  $C$ and $D$ respectively. A matrix $\mathcal{W}\in\mathcal{M}_{r\times s}(H)$ is called a $\left(\mathcal{C},\mathcal{D}\right)$-primitive matrix if $\Delta(\mathcal{W})=\mathcal{C}~\widetilde{\otimes}~\mathcal{W}+\mathcal{W}~\widetilde{\otimes}~\mathcal{D}$.
\end{definition}

\begin{remark}
In fact, if $\mathcal{W}\in\mathcal{M}_{r\times s}(H)$ is a $\left(\mathcal{C},\mathcal{D}\right)$-primitive matrix, then
$
\left(
\begin{array}{cc}
  \mathcal{C} & \mathcal{W} \\
  0 & \mathcal{D}
\end{array}
\right)
$ is a multiplicative matrix of order $r+s$.
\end{remark}

From now on, we assume that ${\Bbbk}$ is an algebraically closed field, and $H$ is a Hopf algebra over ${\Bbbk}$. We fix a family of coradical orthonormal idempotents $\{e_C\}_{C\in \mathcal{S}}$ of $H^\ast$. For any subspace $V\subseteq H$, we denote $V\cap {\rm Ker}~\varepsilon$ by $V^+$.

Recall in Proposition~\ref{2}~(2) that for any $C,D\in \mathcal{S}$,
$$
\Delta({}^C H_1{}^D) \subseteq C\otimes {}^C H_1{}^D + {}^C H_1{}^D \otimes D.
$$
Thus if $\mathcal{C}=(c_{i^\prime i})_{r\times r},\mathcal{D}=( d_{jj^\prime })_{s\times s}$ be basic multiplicative matrices of $C,D$ respectively. For any $w\in {}^C H_1{}^D$, we can write
$$
\Delta(w)=\sum\limits_{i^\prime ,i=1}^r c_{i^\prime i}\otimes x_i^{(i^\prime )}
         +\sum\limits_{j,j^\prime =1}^s y_j^{(j^\prime )}\otimes d_{jj^\prime } ,
$$
where $x_i^{(i^\prime )}, y_j^{(j^\prime )}\in {}^C H_1{}^D$ for each $1\leq i^\prime ,i\leq r$, and $1\leq j,j^\prime \leq s$. Furthermore, we have the following.

\begin{lemma}\label{3}
Let $H$ be a coalgebra. Then for any $w\in {}^C H_1{}^D$,
\item[\hspace*{2mm}$(1)$] If $C\neq D$, then $x_i^{(i^\prime )}, y_j^{(j^\prime )}\in ({}^C H_1{}^D)^+$ for all $1\leq i^\prime ,i\leq r,\ 1\leq j,j^\prime \leq s$. In other words,
$$
\Delta(w) \in C\otimes({}^C H_1{}^D)^+ +({}^C H_1{}^D)^+ \otimes D .
$$
\item[\hspace*{2mm}$(2)$] If $C=D$, if we choose that $\mathcal{C}=\mathcal{D}$, then
$$
\Delta\left( w-\sum\limits_{i^\prime ,i=1}^r \varepsilon( x_i^{(i^\prime)} + y_{i^\prime}^{(i)} ) c_{i^\prime i} \right) \in C\otimes({}^C H_1{}^C)^+ +({}^C H_1{}^C)^+ \otimes C .
$$
\end{lemma}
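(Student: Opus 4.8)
The plan is to prove both parts using only the counit axioms $(\varepsilon\otimes\mathrm{id})\circ\Delta=\mathrm{id}=(\mathrm{id}\otimes\varepsilon)\circ\Delta$ together with the identities $\Delta(c_{i'i})=\sum_{k}c_{i'k}\otimes c_{ki}$ and $\varepsilon(c_{i'i})=\delta_{i'i}$ (and their analogues for $\mathcal{D}$) expressing that $\mathcal{C},\mathcal{D}$ are multiplicative matrices; coassociativity will not be needed. The one structural fact I would isolate first is that ${}^C H_1{}^D\cap H_0=0$ whenever $C\neq D$: any $v\in{}^C H_1{}^D$ lying in the subcoalgebra $H_0$ satisfies $v={}^Cv{}^D\in{}^C H_0{}^D$, which vanishes by Proposition~\ref{2}(1). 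Since $C,D\subseteq H_0$, this gives in particular $C\cap{}^C H_1{}^D=0={}^C H_1{}^D\cap D$ for $C\neq D$.

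For $(1)$, I would apply $\mathrm{id}\otimes\varepsilon$ to the displayed expansion of $\Delta(w)$. Using $\varepsilon(d_{jj'})=\delta_{jj'}$ this produces $w=\sum_{i',i}\varepsilon(x_i^{(i')})\,c_{i'i}+\sum_j y_j^{(j)}$, hence $\sum_{i',i}\varepsilon(x_i^{(i')})\,c_{i'i}=w-\sum_j y_j^{(j)}$. The left-hand side lies in $C$ and the right-hand side in ${}^C H_1{}^D$ (as $w$ and each $y_j^{(j)}$ do), so both lie in $C\cap{}^C H_1{}^D=0$; as the $c_{i'i}$ are linearly independent this forces $\varepsilon(x_i^{(i')})=0$ for all $i',i$. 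Applying $\varepsilon\otimes\mathrm{id}$ instead and using $\varepsilon(c_{i'i})=\delta_{i'i}$ gives symmetrically $\sum_{j,j'}\varepsilon(y_j^{(j')})\,d_{jj'}=w-\sum_i x_i^{(i)}\in D\cap{}^C H_1{}^D=0$, whence $\varepsilon(y_j^{(j')})=0$. Then every $x_i^{(i')},y_j^{(j')}$ lies in $({}^C H_1{}^D)^+$, so the two summands of $\Delta(w)$ lie in $C\otimes({}^C H_1{}^D)^+$ and $({}^C H_1{}^D)^+\otimes D$ respectively, which is the assertion.

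For $(2)$, with $\mathcal{C}=\mathcal{D}$, I would set $\lambda_{i'i}=\varepsilon(x_i^{(i')}+y_{i'}^{(i)})$ and compute $\Delta(w')=\Delta(w)-\sum_{i',i}\lambda_{i'i}\Delta(c_{i'i})$ by substituting $\Delta(c_{i'i})=\sum_k c_{i'k}\otimes c_{ki}$. Splitting the correction along $\lambda_{i'i}=\varepsilon(x_i^{(i')})+\varepsilon(y_{i'}^{(i)})$ and reindexing $\sum_k c_{i'k}\otimes c_{ki}$ so that the $\varepsilon(x)$-part is collected by its left leg and the $\varepsilon(y)$-part by its right leg, one rewrites $\Delta(w')$ as $\sum_{p,q}c_{pq}\otimes\widetilde{x}_q^{(p)}+\sum_{p,q}\widetilde{y}_p^{(q)}\otimes c_{pq}$ with $\widetilde{x}_q^{(p)}=x_q^{(p)}-\sum_i\varepsilon(x_i^{(p)})c_{qi}$ and $\widetilde{y}_p^{(q)}=y_p^{(q)}-\sum_i\varepsilon(y_i^{(q)})c_{ip}$. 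Since $x_q^{(p)},y_p^{(q)}\in{}^C H_1{}^C$ and $C={}^C H_0{}^C\subseteq{}^C H_1{}^C$ by Proposition~\ref{2}(1), both $\widetilde{x}_q^{(p)},\widetilde{y}_p^{(q)}\in{}^C H_1{}^C$, and $\varepsilon(c_{i'i})=\delta_{i'i}$ gives $\varepsilon(\widetilde{x}_q^{(p)})=\varepsilon(\widetilde{y}_p^{(q)})=0$. Hence $\widetilde{x}_q^{(p)},\widetilde{y}_p^{(q)}\in({}^C H_1{}^C)^+$, so $\Delta(w')\in C\otimes({}^C H_1{}^C)^++({}^C H_1{}^C)^+\otimes C$.

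The main obstacle I anticipate is the index bookkeeping in $(2)$: one must check that the single correction $\sum_{i',i}\lambda_{i'i}\Delta(c_{i'i})$, once expanded, separates cleanly into a contribution sitting on the left tensor legs (cancelling the $\varepsilon(x_i^{(i')})$) and one sitting on the right tensor legs (cancelling the $\varepsilon(y_{i'}^{(i)})$). This is exactly where the hypothesis $\mathcal{C}=\mathcal{D}$ is indispensable, since only then do both legs of $\Delta(c_{i'i})$ lie in $C$; everything else is a routine verification.
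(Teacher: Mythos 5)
Your proof is correct and, for part (2), follows essentially the same computation as the paper: expand $\Delta(c_{i^\prime i})=\sum_k c_{i^\prime k}\otimes c_{ki}$, split the correction according to $\varepsilon(x_i^{(i^\prime)})+\varepsilon(y_{i^\prime}^{(i)})$, and regroup so the $\varepsilon(x)$-part lands on the right tensor legs and the $\varepsilon(y)$-part on the left legs, arriving at exactly the paper's corrected elements $x_i^{(i^\prime)}-\sum_k\varepsilon(x_k^{(i^\prime)})c_{ik}$ and $y_j^{(j^\prime)}-\sum_l\varepsilon(y_l^{(j^\prime)})c_{lj}$ --- and you additionally supply the verification (via $C={}^C H_0{}^C\subseteq{}^C H_1{}^C$ and $\varepsilon(c_{i^\prime i})=\delta_{i^\prime i}$) that these lie in $({}^C H_1{}^C)^+$, which the paper leaves as ``easy to verify.'' For part (1) you re-derive the vanishing of $\varepsilon$ on the $x_i^{(i^\prime)},y_j^{(j^\prime)}$ from the counit axioms together with ${}^C H_1{}^D\cap C=0$, whereas the paper gets the same conclusion in one line by citing Proposition~\ref{2}(3) (${}^C H_1{}^D\subseteq{\rm Ker}\,\varepsilon$ for $C\neq D$), since these elements already lie in ${}^C H_1{}^D$ by the setup; both routes are valid.
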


\begin{proof}

\begin{enumerate}
\item If $C\neq D$, $\varepsilon({}^C H_1{}^D)=0$  by Proposition~\ref{2} (3), thus the claim holds.

\item If $C=D$ and $\mathcal{C}=\mathcal{D}$,
\begin{eqnarray*}
\Delta\left( w-\sum\limits_{i^\prime ,i=1}^r\varepsilon(x_i^{(i^\prime )}+y_{i^\prime }^{(i)})c_{i^\prime i}\right)
&=& \Delta(w)
  - \sum\limits_{i^\prime ,i=1}^r\varepsilon(x_i^{(i^\prime )}) \Delta(c_{i^\prime i})
  - \sum\limits_{i^\prime ,i=1}^r\varepsilon(y_{i^\prime }^{(i)}) \Delta(c_{i^\prime i})  \\
&=& \Delta(w)
  - \sum\limits_{i^\prime ,i=1}^r\varepsilon(x_i^{(i^\prime )}) \Delta(c_{i^\prime i})
  - \sum\limits_{j,j^\prime =1}^r\varepsilon(y_j^{(j^\prime )}) \Delta(c_{jj^\prime })  \\
&=& \sum\limits_{i^\prime ,i=1}^r c_{i^\prime i}\otimes x_i^{(i^\prime )}
  + \sum\limits_{j,j^\prime =1}^s y_j^{(j^\prime )}\otimes c_{jj^\prime }\\
 && - \sum\limits_{i^\prime ,i,k=1}^r\varepsilon(x_i^{(i^\prime )}) c_{i^\prime k}\otimes c_{ki}
  - \sum\limits_{j,j^\prime ,l=1}^r\varepsilon(y_j^{(j^\prime )}) c_{jl}\otimes c_{lj^\prime }  \\
&=& \sum\limits_{i^\prime ,i=1}^r c_{i^\prime i}\otimes x_i^{(i^\prime )}
  + \sum\limits_{j,j^\prime =1}^s y_j^{(j^\prime )}\otimes c_{jj^\prime }\\
&&
  - \sum\limits_{i^\prime ,k,i=1}^r c_{i^\prime i}\otimes \varepsilon(x_k^{(i^\prime )})c_{ik}
  - \sum\limits_{l,j^\prime ,j=1}^r\varepsilon(y_l^{(j^\prime )}) c_{lj}\otimes c_{jj^\prime }  \\
&=& \sum\limits_{i^\prime ,i=1}^r c_{i^\prime i}\otimes
      \left[ x_i^{(i^\prime )} -\sum\limits_{k=1}^r \varepsilon(x_k^{(i^\prime )})c_{ik} \right]\\
 & &+ \sum\limits_{j,j^\prime =1}^s
      \left[ y_j^{(j^\prime )}-\sum\limits_{l=1}^r\varepsilon(y_l^{(j^\prime )}) c_{lj} \right]
            \otimes c_{jj^\prime }  .
\end{eqnarray*}
It's easy to verify that $$x_i^{(i^\prime )} -\sum\limits_{k=1}^r \varepsilon(x_k^{(i^\prime )})c_{ik},\ y_j^{(j^\prime )}-\sum\limits_{l=1}^r\varepsilon(y_l^{(j^\prime )}) c_{lj} \in ({}^C H_1{}^D)^+$$ for each $1\leq i^\prime ,i\leq r,\ 1\leq j,j^\prime \leq s$.
\end{enumerate}
\end{proof}

\begin{theorem}\label{7}
Let $H$ be a coalgebra, $C,D$ be simple subcoalgebras of $H$ and $\mathcal{C}=(c_{i^\prime i})_{r\times r},\mathcal{D}=( d_{jj^\prime })_{s\times s}$ be respectively basic multiplicative matrices for $C$ and $D$. Then
\begin{enumerate}
\item If $C\neq D$, then for any $w\in {}^C H_1{}^D$, there exist $rs$  $\left(\mathcal{C},\mathcal{D}\right)$-primitive matrices
        $$
        \mathcal{W}^{(i^\prime ,j^\prime )}= \left( w_{ij}^{(i^\prime ,j^\prime )} \right)_{r\times s}~~
        (1\leq i^\prime \leq r,1\leq j^\prime \leq s),
        $$
        such that $w=\sum\limits_{i=1}^r \sum\limits_{j=1}^s w_{ij}^{(i,j)}$;
\item If $C=D$, then for any $w\in {}^C H_1{}^D$, there exist $rs$ $\left(\mathcal{C},\mathcal{D}\right)$-primitive matrices
        $$
        \mathcal{W}^{(i^\prime ,j^\prime )}= \left( w_{ij}^{(i^\prime ,j^\prime )} \right)_{r\times s}~~
        (1\leq i^\prime \leq r,1\leq j^\prime \leq s),
        $$
        such that $w- \sum\limits_{i=1}^r \sum\limits_{j=1}^s w_{ij}^{(i,j)}\in C$.
\end{enumerate}
\end{theorem}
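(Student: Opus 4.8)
The plan is to build the primitive matrices by "diagonalizing" the coproduct data recorded in Lemma~\ref{3}, using the structure of $C$ and $D$ as simple coalgebras. Fix $w\in{}^C H_1{}^D$ and write $\Delta(w)=\sum_{i',i}c_{i'i}\otimes x_i^{(i')}+\sum_{j,j'}y_j^{(j')}\otimes d_{jj'}$ as in the paragraph preceding Lemma~\ref{3}. The first step is to apply coassociativity to this expression: computing $(\Delta\otimes\mathrm{id})\Delta(w)$ two ways and comparing coefficients against the basis $\{c_{i'i}\}$ of $C$ (respectively $\{d_{jj'}\}$ of $D$) in the appropriate tensor slot. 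Since $\Delta(c_{i'i})=\sum_k c_{i'k}\otimes c_{ki}$ and similarly for $\mathcal{D}$, this should yield that each $x_i^{(i')}$ again lies in ${}^C H_1{}^D$ and, more importantly, that $\Delta(x_i^{(i')})=\sum_k c_{ik}\otimes\text{(something)}+\cdots$, i.e.\ the family $\{x_i^{(i')}\}_{i}$ for fixed $i'$ transforms under $\Delta$ on the left like the $i'$-th "row block" of a $(\mathcal{C},\mathcal{D})$-primitive matrix, and symmetrically the $\{y_j^{(j')}\}$ transform on the right. This is the computational heart of the argument and the step I expect to be the main obstacle: one must extract, from the two mixed sums, the correct $r\times s$ arrays whose coproducts close up into the primitive-matrix shape $\Delta(\mathcal{W})=\mathcal{C}\,\widetilde\otimes\,\mathcal{W}+\mathcal{W}\,\widetilde\otimes\,\mathcal{D}$, and verify the index bookkeeping matches.

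Concretely, for each pair $(i',j')$ I would define the candidate matrix $\mathcal{W}^{(i',j')}=(w_{ij}^{(i',j')})_{r\times s}$ by reading off, from the coassociativity relations, the entry $w_{ij}^{(i',j')}$ as the component of $w$ (or of the $x$'s and $y$'s) that "sits between" the $(i',i)$ position of $\mathcal{C}$ and the $(j,j')$ position of $\mathcal{D}$; the remark following the definition of primitive matrix, identifying a $(\mathcal{C},\mathcal{D})$-primitive matrix with a multiplicative matrix $\left(\begin{smallmatrix}\mathcal{C}&\mathcal{W}\\0&\mathcal{D}\end{smallmatrix}\right)$, gives a clean target: I need the block matrix with $\mathcal{C}$, the candidate $\mathcal{W}$, and $\mathcal{D}$ to satisfy $\Delta=(\text{matrix})\,\widetilde\otimes\,(\text{matrix})$ and have counit the identity, and the counit condition on the off-diagonal block is exactly that the entries of $\mathcal{W}^{(i',j')}$ lie in $\ker\varepsilon$ — which for $C\neq D$ is automatic by Proposition~\ref{2}~(3), and for $C=D$ is arranged by the subtraction in Lemma~\ref{3}~(2).

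Next I would verify the reconstruction formula $w=\sum_{i,j}w_{ij}^{(i,j)}$ (case $C\neq D$) or $w-\sum_{i,j}w_{ij}^{(i,j)}\in C$ (case $C=D$). This should come from applying $\varepsilon$ to an appropriate slot of $\Delta(w)$: since $(\varepsilon\otimes\mathrm{id})\Delta(w)=w$, expanding the left-hand side using the definitions of $x_i^{(i')}$, $y_j^{(j')}$ and the relation $\varepsilon(c_{i'i})=\delta_{i'i}$ collapses the double sums to exactly the diagonal terms $w_{ij}^{(i,j)}$; in the case $C=D$ the leftover group-like-type contribution is precisely $\sum_{i',i}\varepsilon(x_i^{(i')}+y_{i'}^{(i)})c_{i'i}\in C$, matching the correction term already isolated in Lemma~\ref{3}~(2). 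For the case $C=D$ one also uses the freedom, granted there, to take $\mathcal{C}=\mathcal{D}$, so the two "sides" are indexed compatibly and the block-matrix picture makes sense with a single $r\times r$ multiplicative matrix in both corners.

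Finally, I would check that each $\mathcal{W}^{(i',j')}$ genuinely satisfies the $(\mathcal{C},\mathcal{D})$-primitive identity, not merely that the $x$'s and $y$'s have the right one-sided coproduct behaviour: this requires combining the left-transformation law (from the $\mathcal{C}$-side comparison) and the right-transformation law (from the $\mathcal{D}$-side comparison) into the single two-term identity, which is where the consistency forced by coassociativity — that the "left" data and "right" data of $\Delta(w)$ are not independent but glue along a common middle term — is used decisively. I expect no further obstacle here once the indexing in the first step is set up correctly; it is a matter of substituting the established relations into $\Delta(\mathcal{W}^{(i',j')})$ and recognizing the result as $\mathcal{C}\,\widetilde\otimes\,\mathcal{W}^{(i',j')}+\mathcal{W}^{(i',j')}\,\widetilde\otimes\,\mathcal{D}$.
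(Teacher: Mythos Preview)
Your proposal is correct and follows essentially the same route as the paper: start from the decomposition of $\Delta(w)$ in Lemma~\ref{3}, apply coassociativity to extract the left $\mathcal{C}$-transformation law for the $x_i^{(i')}$, expand $\Delta(x_i^{(i')})$ against the basis of $D$ to \emph{define} the entries $w_{ij}^{(i',j')}$, apply coassociativity once more (now to $x_i^{(i')}$) to close up the full $(\mathcal{C},\mathcal{D})$-primitive identity, and recover $w$ (resp.\ $w$ modulo $C$) via the counit. The paper carries this out sequentially rather than symmetrically in $x$'s and $y$'s, but the substance is identical, including the use of Lemma~\ref{3}~(2) to absorb the correction term when $C=D$.
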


\begin{proof}
\begin{enumerate}
\item
If $C\neq D$,  then by Lemma~\ref{3} (1),
$$
\Delta(w)=\sum\limits_{i^\prime ,i=1}^r c_{i^\prime i}\otimes x_i^{(i^\prime )}
         +\sum\limits_{j,j^\prime =1}^s y_j^{(j^\prime )}\otimes d_{jj^\prime } ,
$$
where $x_i^{(i^\prime )}, y_j^{(j^\prime )}\in ({}^C H_1{}^D)^+$ for all $1\leq i^\prime ,i\leq r,1\leq j,j^\prime \leq s$. Then
\begin{equation}\label{E1}
    w=w\leftharpoonup \varepsilon=\sum\limits_{i=1}^r x_i^{(i)} .
\end{equation}

We write for convenience $\Delta(w)\in \sum\limits_{i^\prime ,i=1}^r c_{i^\prime i}\otimes x_i^{(i^\prime )} + H_1\otimes D$. By coassociativity,
\begin{eqnarray*}
(id \otimes\Delta)\circ\Delta(w)
&  \in &\sum\limits_{i^\prime ,i=1}^r c_{i^\prime i}\otimes \Delta(x_i^{(i^\prime )}) + H_1\otimes D\otimes D  \\
&  \subseteq & \sum\limits_{i^\prime ,i=1}^r c_{i^\prime i}\otimes \Delta(x_i^{(i^\prime )})
          + H_1\otimes H_1\otimes D  ,
\end{eqnarray*}
on the other hand, \begin{eqnarray*}
(\Delta\otimes id)\circ\Delta(w)
&  \in &\sum\limits_{i^\prime ,i,k=1}^r c_{i^\prime k}\otimes c_{ki}\otimes x_i^{(i^\prime )}
    + H_1\otimes H_1\otimes D  \\
&  \subseteq &\sum\limits_{i^\prime ,k,i=1}^r c_{i^\prime i}\otimes c_{ik}\otimes x_k^{(i^\prime )}
          + H_1\otimes H_1\otimes D .
\end{eqnarray*}
Therefore
$
\sum\limits_{i^\prime ,i=1}^r c_{i^\prime i}\otimes
\left[ \Delta(x_i^{(i^\prime )}) -\sum\limits_{k=1}^r c_{ik}\otimes x_k^{(i^\prime )} \right]
\in H_1\otimes H_1\otimes D
$.
 As $\{c_{i^\prime i} \mid 1\leq i^\prime ,i\leq r\}$ are linearly independent, thus for  each $1\leq i^\prime ,i\leq r$,
\begin{eqnarray*}
\Delta(x_i^{(i^\prime )})& -&\sum\limits_{k=1}^r c_{ik}\otimes x_k^{(i^\prime )} \in H_1\otimes D ,  \\
  \Delta(x_i^{(i^\prime )}) &\in& \sum\limits_{k=1}^r c_{ik}\otimes x_k^{(i^\prime )} + H_1\otimes D .
\end{eqnarray*}
Hence we can write
$$
\Delta(x_i^{(i^\prime )})
= \sum\limits_{k=1}^r c_{ik}\otimes x_k^{(i^\prime )}
  + \sum\limits_{j,j^\prime =1}^s w_{ij}^{(i^\prime ,j^\prime )}\otimes d_{jj^\prime } ,
$$ for $w_{ij}^{(i^\prime ,j^\prime )}\in H_1$. It follows that
\begin{equation}\label{E2}
x_i^{(i^\prime )}=\varepsilon\rightharpoonup x_i^{(i^\prime )}=\sum\limits_{j=1}^s w_{ij}^{(i^\prime ,j)} .
\end{equation}
Furthermore by a similarly argument,
\begin{eqnarray*}
(\Delta\otimes id)\circ\Delta(x_i^{(i^\prime )}) &
=& \sum\limits_{k,m=1}^r c_{im}\otimes c_{mk}\otimes x_k^{(i^\prime )}
  + \sum\limits_{j,j^\prime =1}^s \Delta(w_{ij}^{(i^\prime ,j^\prime )})\otimes d_{jj^\prime } , \\
(id \otimes\Delta)\circ\Delta(x_i^{(i^\prime )}) &
=& \sum\limits_{k=1}^r c_{ik}\otimes
    \left( \sum\limits_{m=1}^r c_{km}\otimes x_m^{(i^\prime )}
           + \sum\limits_{j,j^\prime =1}^s w_{kj}^{(i^\prime ,j^\prime )} \otimes d_{jj^\prime } \right)  \\
  & & + \sum\limits_{j,j^\prime ,l=1}^s w_{ij}^{(i^\prime ,j^\prime )} \otimes d_{jl}\otimes d_{lj^\prime }  \\
 &
=& \sum\limits_{k,m=1}^r c_{ik}\otimes c_{km}\otimes x_m^{(i^\prime )}  \\
  & & + \sum\limits_{k=1}^r \sum\limits_{j,j^\prime =1}^s c_{ik}\otimes w_{kj}^{(i^\prime ,j^\prime )} \otimes d_{jj^\prime }
  + \sum\limits_{l,j^\prime ,j=1}^s w_{il}^{(i^\prime ,j^\prime )} \otimes d_{lj}\otimes d_{jj^\prime }  \\
 &
=& \sum\limits_{m,k=1}^r c_{im}\otimes c_{mk}\otimes x_k^{(i^\prime )}  \\
  & & + \sum\limits_{j^\prime ,j=1}^s  \left[
       \sum\limits_{k=1}^r c_{ik}\otimes w_{kj}^{(i^\prime ,j^\prime )}
       + \sum\limits_{l=1}^s w_{il}^{(i^\prime ,j^\prime )} \otimes d_{lj}  \right]  \otimes d_{jj^\prime } .
\end{eqnarray*}
We obtain
$$
\sum\limits_{j,j^\prime =1}^s \Delta(w_{ij}^{(i^\prime ,j^\prime )})\otimes d_{jj^\prime }
= \sum\limits_{j^\prime ,j=1}^s  \left[
       \sum\limits_{k=1}^r c_{ik}\otimes w_{kj}^{(i^\prime ,j^\prime )}
       + \sum\limits_{l=1}^s w_{il}^{(i^\prime ,j^\prime )} \otimes d_{lj}  \right]  \otimes d_{jj^\prime } .
$$
 So
$$
\Delta(w_{ij}^{(i^\prime ,j^\prime )})
= \sum\limits_{k=1}^r c_{ik}\otimes w_{kj}^{(i^\prime ,j^\prime )}
       + \sum\limits_{l=1}^s w_{il}^{(i^\prime ,j^\prime )} \otimes d_{lj} .
$$ for all $ 1\leq i^\prime ,i\leq r,1\leq j,j^\prime \leq s$.

Therefore, $\mathcal{W}^{(i^\prime ,j^\prime )}= \left( w_{ij}^{(i^\prime ,j^\prime )} \right)_{r\times s}$ is $\left(\mathcal{C},\mathcal{D}\right)$-primitive for each $1\leq i^\prime \leq r,1\leq j^\prime \leq s$. By equalities \eqref{E1} and \eqref{E2}, we have
$$
w=\sum\limits_{i=1}^r \sum\limits_{j=1}^s w_{ij}^{(i,j)}.
$$

\item
If $C=D$ and $\mathcal{C}=\mathcal{D}$, by Lemma~\ref{3}(2), there exists an element $c\in C$ such that
$$
\Delta\left( w-c \right) \in C\otimes({}^C H_1{}^C)^+ +({}^C H_1{}^C)^+ \otimes C .
$$
Then the same proof of (1) can be applied to the element $w-c$, and we are done.
\end{enumerate}
\end{proof}

\section{The Exponent of a Non-Cosemisimple Hopf Algebra with Chevalley Property in Characteristic 0}

We prove in this section that the exponent of a non-cosemisimple Hopf algebra in characteristic 0 is $\infty$ if its coradical is a finite-dimensional sub\-Hopf\-algebra.

Recall that the exponent for a Hopf algebra $H$ is the least positive integer $n$ such that the $n$th Hopf power $[n]$ is trivial. It can be defined on bialgebras, and it differs from the definition in P. Etingof and S. Gelaki \cite{E-G 1999}, which uses the antipode $S$.

\begin{definition}[\protect{\cite[Section 0]{Kashina 2000}} and \protect{\cite[Section 1]{L-M-S 2006}}]
Let $(H,m,u,\Delta,\varepsilon)$ be a ${\Bbbk}$-bialgebra,
\begin{enumerate}
\item For a positive integer $n$, denote following maps $\Delta_n:H\rightarrow H^{\otimes n},h\mapsto \sum\limits_h h_{(1)}\otimes h_{(2)}\otimes \cdots \otimes h_{(n)}$ and $m_n:H^{\otimes n}\rightarrow H,h_1\otimes h_2\otimes \cdots \otimes h_n\mapsto h_1h_2\cdots h_n$, where $h_1,h_2,\cdots,h_n\in H$. And the $n$th Hopf power map on $H$ is defined to be $[n]=m_n\circ \Delta_n$.
\item The $n$th Hopf power of $h\in H$ is said to be trivial if $h^{[n]}=\varepsilon(h)1$. The Hopf order of $h$ is the least positive integer $n$ such that the $n$th Hopf power of $h$ is trivial.
\item The exponent of $H$ is the least positive integer $n$ such that the $n$th Hopf power of every element in $H$ is trivial, and $\infty$ if such an $n$ does not exist. Denote the exponent of $H$ by ${\rm exp}(H)$.
\end{enumerate}
\end{definition}

Y. Kashina \cite{Kashina 1999} checked that $(\dim_{\Bbbk} H)$th Hopf power $[\dim_{\Bbbk} H]$ is trivial on $H$ if and only if it is trivial on a Hopf algebra $H\otimes_{\Bbbk} K$ over any extension field $K$ over ${\Bbbk}$. In fact the exponent is invariant under field extensions of bialgebras, and P. Etingof and S. Gelaki also listed this property (for exponent with $S^{-2}$) in \cite[Proposition 2.2(8)]{E-G 1999}.

\begin{proposition}[cf. \protect{\cite[Lemma 3]{Kashina 1999}}]\label{5}
Let $H$ be a ${\Bbbk}$-bialgebra. Let $K\supseteq{\Bbbk}$ be a field extension. Then ${\rm exp}(H\otimes_{\Bbbk} K)={\rm exp}(H)$.
\end{proposition}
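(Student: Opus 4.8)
The plan is to reduce the statement to the elementary fact that the Hopf power maps of $H\otimes_{\Bbbk}K$ are obtained from those of $H$ by extension of scalars, together with the observation that a $\Bbbk$-linear endomorphism of $H$ vanishes if and only if its base change to $K$ does. Write $H_K=H\otimes_{\Bbbk}K$, viewed as a $K$-bialgebra. Since $K$ is free as a $\Bbbk$-module, there is a canonical isomorphism $H_K\otimes_K H_K\cong (H\otimes_{\Bbbk}H)\otimes_{\Bbbk}K$, and under it the multiplication $m_{H_K}$ and comultiplication $\Delta_{H_K}$ correspond to $m_H\otimes\mathrm{id}_K$ and $\Delta_H\otimes\mathrm{id}_K$; likewise $u_{H_K}$ and $\varepsilon_{H_K}$ are the base changes of $u_H$ and $\varepsilon_H$.

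First I would record, for each positive integer $n$, the analogous identifications $H_K^{\otimes_K n}\cong H^{\otimes_{\Bbbk}n}\otimes_{\Bbbk}K$ and verify by induction on $n$ that, under them, $\Delta_n^{H_K}=\Delta_n^{H}\otimes\mathrm{id}_K$ and $m_n^{H_K}=m_n^{H}\otimes\mathrm{id}_K$. Composing these, the $n$th Hopf power map of $H_K$ satisfies $[n]_{H_K}=[n]_H\otimes\mathrm{id}_K$, while $u_{H_K}\circ\varepsilon_{H_K}=(u_H\circ\varepsilon_H)\otimes\mathrm{id}_K$. Hence the $\Bbbk$-linear map $g_n:=[n]_H-u_H\circ\varepsilon_H\colon H\to H$ has base change $g_n\otimes\mathrm{id}_K=[n]_{H_K}-u_{H_K}\circ\varepsilon_{H_K}$.

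Next I would argue that $g_n=0$ if and only if $g_n\otimes\mathrm{id}_K=0$. One implication is trivial. For the other, choose a $\Bbbk$-basis of $K$ containing $1_K$; this exhibits $H_K$ as a direct sum of copies of $H$ in which the summand indexed by $1_K$ is the image of $h\mapsto h\otimes 1_K$, so that map is injective, and if $g_n(h)\neq0$ for some $h\in H$ then $(g_n\otimes\mathrm{id}_K)(h\otimes 1_K)=g_n(h)\otimes 1_K\neq0$. Consequently, for every $n$ the equality $[n]_H=u_H\circ\varepsilon_H$ holds exactly when $[n]_{H_K}=u_{H_K}\circ\varepsilon_{H_K}$ holds. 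Thus the set of positive integers $n$ for which the $n$th Hopf power is trivial is the same for $H$ and for $H_K$, so the two sets have the same least element, or are both empty; in either case $\exp(H_K)=\exp(H)$.

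The argument is essentially bookkeeping, and I do not expect a serious obstacle; the only point needing a little care is the compatibility of the iterated structure maps $\Delta_n$ and $m_n$ with base change, i.e. the inductive check in the first step that the canonical identifications $H_K^{\otimes_K n}\cong H^{\otimes_{\Bbbk}n}\otimes_{\Bbbk}K$ intertwine $\Delta_n^{H_K}$ with $\Delta_n^{H}\otimes\mathrm{id}_K$ and $m_n^{H_K}$ with $m_n^{H}\otimes\mathrm{id}_K$. One could alternatively invoke the fundamental theorem of coalgebras to reduce to the finite-dimensional case treated in \cite{Kashina 1999}, but the direct base-change argument above is cleaner and requires no finiteness hypothesis.
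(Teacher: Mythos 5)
Your proposal is correct and follows essentially the same route as the paper: the paper likewise verifies $(h\otimes\alpha)^{[n]}=h^{[n]}\otimes\alpha$ and $u\circ\varepsilon(h\otimes\alpha)=\varepsilon(h)1_H\otimes\alpha$ on simple tensors and concludes that the $n$th Hopf power is trivial on $H\otimes_{\Bbbk}K$ if and only if it is trivial on $H$. Your version just makes the base-change bookkeeping (the identification $[n]_{H_K}=[n]_H\otimes\mathrm{id}_K$ and the faithful-flatness-style injectivity of $h\mapsto h\otimes 1_K$) more explicit.
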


\begin{proof}
For all $h\in H,\alpha\in K$ and for any positive integer $n$, $(h\otimes\alpha)^{[n]}=h^{[n]}\otimes\alpha$ and $u\circ\varepsilon(h\otimes\alpha)=\varepsilon(h)1_H\otimes\alpha$ in $H\otimes_{\Bbbk} K$. Thus, the $n$th Hopf power on $H\otimes_{\Bbbk} K$ is trivial if and only if the $n$th Hopf power on $H$ is trivial.
\end{proof}

Consider the $n$th Hopf power of a multiplicative matrix $\mathcal{G}=(g_{ij})$ over $H$. We need to calculate $\mathcal{G}^{[n]}=(g_{ij}{}^{[n]})=m_n\circ\Delta_n(\mathcal{G})$. First, $\Delta_n(\mathcal{G})=\mathcal{G}\widetilde{\otimes}\mathcal{G}\widetilde{\otimes}\cdots\widetilde{\otimes}\mathcal{G}$ ($n$ $\mathcal{G}$s in total). Then $m_n\circ\Delta_n(\mathcal{G})=\mathcal{G}^n$ by Proposition \ref{6}(2). So the $n$th Hopf power of a multiplicative matrix equals to the $n$th (multiplication) power of it.

The following proposition is an analogues of a result of Y. Kashina, Y. Sommerh\"{a}user and Y. Zhu~\cite[Corollary 3]{K-S-Z 2006}.

\begin{proposition}[cf. \protect{\cite[Corollary 3]{K-S-Z 2006}}]\label{19}
Let $(H,m,u,\Delta,\varepsilon)$ be a ${\Bbbk}$-bialgebra. Let $\mathcal{G}$ be an $r\times r$ multiplicative matrix over $H$. Then
\begin{enumerate}
\item $\mathcal{G}^{[n]}=m_n\circ\Delta_n(\mathcal{G})=\mathcal{G}^n$ for any positive integer $n$.
\item If $H$ is  a Hopf algebra with antipode $S$, then $S(\mathcal{G})\mathcal{G}=\mathcal{G}S(\mathcal{G})=I_r$.
\end{enumerate}
\end{proposition}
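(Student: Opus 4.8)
The plan is to treat both parts as direct translations, via the matrix calculus of Section 3, of the two elementary identities satisfied by a group-like element $g$ of a bialgebra, namely $[n](g)=g^{n}$ and $S(g)g=gS(g)=\varepsilon(g)1$. The proof is essentially formal; the work is bookkeeping with the operation $\widetilde{\otimes}$.

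For (1) I would first show, by induction on $n$, that $\Delta_{n}(\mathcal{G})=\mathcal{G}\,\widetilde{\otimes}\,\mathcal{G}\,\widetilde{\otimes}\cdots\widetilde{\otimes}\,\mathcal{G}$ ($n$ copies of $\mathcal{G}$). The case $n=1$ is trivial and $n=2$ is the defining property $\Delta(\mathcal{G})=\mathcal{G}\,\widetilde{\otimes}\,\mathcal{G}$. For the inductive step one writes $\Delta_{n+1}=(\Delta\otimes \mathrm{id}^{\otimes(n-1)})\circ\Delta_{n}$, applies $\Delta$ to the first tensor leg of each entry of $\mathcal{G}^{\widetilde{\otimes}n}$, and uses $\Delta(\mathcal{G})=\mathcal{G}\,\widetilde{\otimes}\,\mathcal{G}$ entrywise to obtain $\mathcal{G}^{\widetilde{\otimes}(n+1)}$; the associativity of $\widetilde{\otimes}$ recorded in the Remark following its definition makes the bracketing irrelevant. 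Then $[n](\mathcal{G})=m_{n}\circ\Delta_{n}(\mathcal{G})=m_{n}(\mathcal{G}^{\widetilde{\otimes}n})$, and either a second induction or the repeated use of Proposition~\ref{6}(2) (which says $m\circ\widetilde{\otimes}$ is ordinary matrix multiplication) identifies this with the $n$-fold matrix product $\mathcal{G}^{n}$; concretely the $(i,j)$ entry of $m_{n}(\mathcal{G}^{\widetilde{\otimes}n})$ is $\sum_{k_{1},\dots,k_{n-1}}g_{ik_{1}}g_{k_{1}k_{2}}\cdots g_{k_{n-1}j}$, which is exactly the $(i,j)$ entry of $\mathcal{G}^{n}$. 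This is the claim.

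For (2) I would apply the two antipode axioms entrywise to $\mathcal{G}$. From $\Delta(\mathcal{G})=\mathcal{G}\,\widetilde{\otimes}\,\mathcal{G}$ we have $\Delta(g_{ij})=\sum_{k}g_{ik}\otimes g_{kj}$, and from $\varepsilon(\mathcal{G})=I_{r}$ we have $\varepsilon(g_{ij})=\delta_{ij}$. Hence the identity $\sum_{h}S(h_{(1)})h_{(2)}=\varepsilon(h)1$ applied to $h=g_{ij}$ gives $\sum_{k}S(g_{ik})g_{kj}=\delta_{ij}1_{H}$; since $S(\mathcal{G})=(S(g_{ij}))$ by definition and, by Proposition~\ref{6}(2), the left-hand side is the $(i,j)$ entry of the matrix product $S(\mathcal{G})\mathcal{G}$, we conclude $S(\mathcal{G})\mathcal{G}=I_{r}$. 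The symmetric axiom $\sum_{h}h_{(1)}S(h_{(2)})=\varepsilon(h)1$ applied to $g_{ij}$ gives $\mathcal{G}S(\mathcal{G})=I_{r}$. Equivalently one may phrase this as $m\circ(S\otimes\mathrm{id})(\mathcal{G}\,\widetilde{\otimes}\,\mathcal{G})=m\circ(S\otimes\mathrm{id})\circ\Delta(\mathcal{G})=u(\varepsilon(\mathcal{G}))=I_{r}$, and similarly with $\mathrm{id}\otimes S$.

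I do not expect a genuine obstacle. The only points requiring care are: checking that the recursive decompositions of $\Delta_{n}$ and $m_{n}$ are compatible with the way $\widetilde{\otimes}$ inserts the contraction index, so that the indices telescope into a single matrix product rather than something larger; and being explicit that applying a linear map to each matrix entry commutes appropriately with $\widetilde{\otimes}$ and with matrix multiplication, which is exactly the content of Proposition~\ref{6} and of the associativity Remark for $\widetilde{\otimes}$. Part (1) already appears informally in the paragraph preceding the statement, so its proof mainly consists in promoting that observation to an induction.
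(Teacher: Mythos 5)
Your proposal is correct and follows essentially the same route as the paper: part (1) is exactly the paper's observation in the paragraph preceding the proposition (that $\Delta_n(\mathcal{G})=\mathcal{G}\,\widetilde{\otimes}\cdots\widetilde{\otimes}\,\mathcal{G}$ and $m_n$ of this is $\mathcal{G}^n$ by Proposition~\ref{6}(2)), merely promoted to an explicit induction, and your closing formulation of part (2) as $m\circ(S\otimes\mathrm{id})\circ\Delta(\mathcal{G})=u(\varepsilon(\mathcal{G}))=I_r$ is verbatim the paper's one-line proof. No gaps.
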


\begin{proof}
We prove (2). Since $\Delta(\mathcal{G})=\mathcal{G}\widetilde{\otimes}\mathcal{G}$, then $S(\mathcal{G})\mathcal{G}=m\circ(S\otimes id)\circ\Delta(\mathcal{G})=u\circ\varepsilon(\mathcal{G})=u(I_r)=I_r$.
\end{proof}

Now we consider a non-cosemisimple  Hopf algebra $H$ over ${\Bbbk}$.

\begin{proposition}\label{4}
Let $H$ be a non-cosemisimple Hopf algebra. Then $H_1{}^1\supsetneq {\Bbbk} 1$, i.e. $\left({}^C H_1{}^1\right)^+\neq 0$ for some $C\in\mathcal{S}$.
\end{proposition}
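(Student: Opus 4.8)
Here is how I would prove Proposition~\ref{4}.

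The plan is to take a ``new'' element of $H_1$ and, by right multiplication with the antipode of a basic multiplicative matrix, turn it into an element of $H_1{}^1$ that lies outside $H_0$. Since $H$ is non-cosemisimple we have $H_1\supsetneq H_0$ (if $H_1=H_0$ then $H_n=H_0$ for all $n$ and $H=H_0$). By Proposition~\ref{2}~(1) and (4), $H_1=\bigoplus_{C,D\in\mathcal S}{}^C H_1{}^D$ while $H_0=\bigoplus_{C,D\in\mathcal S}{}^C H_0{}^D$, so ${}^C H_1{}^D\supsetneq{}^C H_0{}^D$ for some $C,D\in\mathcal S$. I fix basic multiplicative matrices $\mathcal C=(c_{i'i})_{r\times r}$ of $C$ and $\mathcal D=(d_{jj'})_{s\times s}$ of $D$ and pick $w\in{}^C H_1{}^D$ with $w\notin H_0$, then apply Theorem~\ref{7}: $w$ (respectively $w$ minus a suitable element of $C$, if $C=D$) is a sum $\sum_{i,j}w^{(i,j)}_{ij}$ of entries of $(\mathcal C,\mathcal D)$-primitive matrices $\mathcal W^{(i',j')}$. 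Since $w\notin H_0$ (and $C\subseteq H_0$), some $w^{(i,j)}_{ij}\notin H_0$, so some $(\mathcal C,\mathcal D)$-primitive matrix $\mathcal W=(w_{ij})_{r\times s}$ has an entry outside $H_0$. Note that every entry of a $(\mathcal C,\mathcal D)$-primitive matrix lies in $H_1$, because its coproduct lies in $C\otimes H+H\otimes D\subseteq H_0\otimes H+H\otimes H_0$.

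Now the main construction: put $\mathcal V:=\mathcal W\,S(\mathcal D)$, i.e.\ $v_{ij}=\sum_l w_{il}S(d_{lj})$. By Proposition~\ref{19}~(2), $S(\mathcal D)\mathcal D=\mathcal D\,S(\mathcal D)=I_s$; hence $\mathcal V\mathcal D=\mathcal W$, so $\mathcal V\neq 0$ and $w_{ij}=\sum_l v_{il}d_{lj}$. Expanding $\Delta(v_{ij})=\sum_l\Delta(w_{il})\Delta(S(d_{lj}))$ using that $\Delta$ is an algebra map, that $S$ is an anti-coalgebra map, that $\mathcal W$ is $(\mathcal C,\mathcal D)$-primitive, and that $\mathcal D\,S(\mathcal D)=I_s$, one gets
\[
\Delta(v_{ij})=\sum_{a=1}^{r}\sum_{c=1}^{s}c_{ia}\,S(d_{cj})\otimes v_{ac}+v_{ij}\otimes 1_H ,
\]
so $\mathcal V$ behaves like a primitive matrix whose right-hand simple coalgebra has been replaced by $\Bbbk 1$. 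Here I use that the coradical $H_0$ is a subalgebra of $H$ — the standing hypothesis in this part of the paper: then each $c_{ia}S(d_{cj})\in C\cdot S(D)\subseteq H_0$, and since the coradical filtration is then an algebra filtration, $v_{ij}\in H_1H_0\subseteq H_1$ for all $i,j$. Moreover some $v_{ij}\notin H_0$: otherwise $w_{ij}=\sum_l v_{il}d_{lj}\in H_0H_0\subseteq H_0$ for all $i,j$, contradicting the previous paragraph. Fix $i_0,j_0$ with $v_{i_0j_0}\notin H_0$.

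Finally, applying $e_{\Bbbk 1}$ to the right-hand tensorand of the displayed coproduct and using $\langle e_{\Bbbk 1},1_H\rangle=\varepsilon(1_H)=1$,
\[
e_{\Bbbk 1}\rightharpoonup v_{i_0j_0}=v_{i_0j_0}+\sum_{a,c}\langle e_{\Bbbk 1},v_{ac}\rangle\,c_{i_0a}S(d_{cj_0})\equiv v_{i_0j_0}\pmod{H_0},
\]
so $e_{\Bbbk 1}\rightharpoonup v_{i_0j_0}\notin H_0$, in particular it is not in $\Bbbk 1$. Since $v_{i_0j_0}\in H_1$, this element lies in $H_1{}^1=e_{\Bbbk 1}\rightharpoonup H_1$, and since $\Bbbk 1\subseteq H_1{}^1$ always, $H_1{}^1\supsetneq\Bbbk 1$. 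For the reformulation, $H_1{}^1=\bigoplus_{C\in\mathcal S}{}^C H_1{}^1$ with ${}^C H_1{}^1=({}^C H_1{}^1)^+$ for $C\neq\Bbbk 1$ (Proposition~\ref{2}~(3)) and ${}^1 H_1{}^1=\Bbbk 1\oplus({}^1 H_1{}^1)^+$; if every $({}^C H_1{}^1)^+$ vanished, $H_1{}^1$ would equal $\Bbbk 1$. Hence $({}^C H_1{}^1)^+\neq 0$ for some $C\in\mathcal S$.

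I expect the main obstacle to be exactly the point where I invoke that $H_0$ is a subalgebra, guaranteeing $C\cdot S(D)\subseteq H_0$ and $\mathcal V\in\mathcal M_{r\times s}(H_1)$. Multiplication by $S(\mathcal D)$ is the only step that leaves purely coalgebra/antipode manipulations, and it is the only mechanism that moves the right-hand index of the component ${}^C H_1{}^D$ down to $\Bbbk 1$; for a non-pointed simple coalgebra $D$ this genuinely requires that products respect the coradical filtration. The coproduct identity for $\mathcal V$ is routine bookkeeping with $\widetilde\otimes$, and the remaining steps are formal.
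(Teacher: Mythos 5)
Your argument is internally coherent and the matrix computation $\Delta(v_{ij})=\sum_{a,c}c_{ia}S(d_{cj})\otimes v_{ac}+v_{ij}\otimes 1_H$ is correct, but the proof as written establishes the proposition only under an extra hypothesis that is not part of the statement. You invoke ``the coradical $H_0$ is a subalgebra of $H$ --- the standing hypothesis in this part of the paper,'' but the Chevalley property is \emph{not} a standing assumption: it enters only as an explicit hypothesis of Theorem~\ref{9}, while Proposition~\ref{4} is stated (and proved in the paper) for an arbitrary non-cosemisimple Hopf algebra over the algebraically closed base field. Your construction uses the Chevalley property at three load-bearing points: to get $c_{ia}S(d_{cj})\in C\cdot S(D)\subseteq H_0$, to get $v_{ij}\in H_1H_0\subseteq H_1$ (so that $e_{\Bbbk 1}\rightharpoonup v_{i_0j_0}$ lands in $H_1{}^1$ at all), and to rule out $v_{ij}\in H_0$ for all $i,j$ via $H_0H_0\subseteq H_0$. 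Without $H_0$ being a subalgebra none of these inclusions is available, and multiplying by $S(\mathcal{D})$ --- which you correctly identify as the one step that is not pure coalgebra bookkeeping --- can push you out of the coradical filtration entirely. So the argument cannot be repaired within its own framework; it genuinely proves a weaker statement (adequate for the application in Theorem~\ref{9}, where the Chevalley property is assumed, but not for the proposition as stated).

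The paper's proof avoids multiplication by elements of $H_0$ altogether. It argues by contradiction: if $H_1{}^1=\Bbbk 1$, then an induction along the coradical filtration using $\Delta(H_n{}^1)\subseteq H_0\otimes H_n{}^1+H_n\otimes\Bbbk 1$ forces $H_n{}^1=\Bbbk 1$ for all $n$, hence $H^1=\Bbbk 1$. Then $H=\Bbbk 1\oplus\bigl(\bigoplus_{D\neq\Bbbk 1}H^D\bigr)$ with $e_{\Bbbk 1}$ vanishing on the second summand, which exhibits $e_{\Bbbk 1}$ as a left integral of $H^\ast$ with $\langle e_{\Bbbk 1},1\rangle=1$; the dual Maschke theorem then makes $H$ cosemisimple, a contradiction. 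If you want to keep your ``primitive matrix'' viewpoint, you would still need to replace the step $\mathcal{V}=\mathcal{W}S(\mathcal{D})$ by something that does not assume products of coradical elements stay in the coradical --- which is essentially why the paper resorts to the integral argument instead.
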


\begin{proof}
Suppose that  $H_1{}^1={\Bbbk} 1$. We will conclude a contradiction.

We first show that $H^1={\Bbbk} 1$. Obviously we have $H_0{}^1={\Bbbk} 1$ and $H_1{}^1={\Bbbk} 1$. We will show by induction that $H_n{}^1={\Bbbk} 1$ for any $n\in {\mathbb{ Z}}^+$.

Assume that we have proved  $H_0{}^1=H_1{}^1=\cdots=H_{n-1}{}^1={\Bbbk} 1$. Then
\begin{eqnarray*}
   \Delta(H_n{}^1)   &  \subseteq& H_0 \otimes H_n{}^1 + \sum\limits_{i=1}^n H_i \otimes H_{n-i}{}^1  \\
                     &  =&         H_0 \otimes H_n{}^1 + \sum\limits_{i=1}^n H_i \otimes {\Bbbk} 1  \\
                     &  \subseteq& H_0 \otimes H_n{}^1 + H_n \otimes {\Bbbk} 1  \\
                     &  \subseteq& H_0 \otimes H + H \otimes H_0.
\end{eqnarray*}
Therefore by coradical definition, $H_n{}^1 \subseteq H_1$, and $H_n{}^1 = (H_n{}^1)^1 \subseteq H_1{}^1$ for all $n\in {\mathbb{ Z}}^+$. As a consequence, $H_n{}^1 = H_1{}^1 = {\Bbbk} 1$.

Now $H^1={\Bbbk} 1$ holds. By Proposition~\ref{2} (4),
$$
H=H^1\oplus\left(\bigoplus\limits_{D\in\mathcal{S}\setminus\{{\Bbbk}1\}}H^D\right)= {\Bbbk}1\oplus \left(\bigoplus\limits_{D\in\mathcal{S}\setminus \{{\Bbbk} 1\} } {H^D}\right)
$$
Consider $e_{{\Bbbk} 1}\in H^\ast$. It is clear that
$$
\left\langle e_{{\Bbbk} 1}, \bigoplus\limits_{D\in\mathcal{S}\setminus \{{\Bbbk} 1\} } H^D \right\rangle
=\sum\limits_{D\in\mathcal{S}\setminus \{{\Bbbk} 1\} } \left\langle e_{{\Bbbk} 1}, H^D \right\rangle
=\sum\limits_{D\in\mathcal{S}\setminus \{{\Bbbk} 1\} } \left\langle e_{{\Bbbk} 1} e_D , H \right\rangle
=0 .
$$
Note that $\bigoplus\limits_{D\in\mathcal{S}\setminus \{{\Bbbk} 1\} }{H^D}$ is a left coideal of $H$. It follows that $e_{{\Bbbk} 1}$ is a left integral in $H^\ast$. However, $\langle e_{{\Bbbk} 1},1 \rangle =1$ implies the cosemisimplity of $H$ by dual Maschke Theorem \cite[Theorem 2.4.6]{Montgomery 1993}, which contracts with the assumption that $H$ is non-cosemisimple. Consequently $H_1^{~1}\supsetneq {\Bbbk} 1$.

By a result of Radford \cite[Formula (1.5c)]{Radford 1978}, we have ${}^1 H_1{}^1={\Bbbk} 1\oplus \left({}^1 H_1{}^1\right)^+$. On the other hand, $\left( {}^C H_1{}^1 \right)^+ ={}^C H_1{}^1$ for each $C\in \mathcal{S}\setminus\{{\Bbbk} 1\}$ holds by Proposition \ref{2} (3). Therefore
\begin{eqnarray*}
H_1{}^1 & =&\bigoplus\limits_{C\in\mathcal{S}} {}^C H_1{}^1
={}^1 H_1{}^1 \oplus\left(\bigoplus\limits_{C\in\mathcal{S}\setminus\{{\Bbbk} 1\}} {}^C H_1{}^1 \right)  \\
 &
=&{\Bbbk} 1\oplus\left({}^1 H_1{}^1 \right)^+ \oplus\left[\bigoplus\limits_{C\in\mathcal{S}\setminus\{{\Bbbk} 1\}} \left({}^C H_1{}^1 \right)^+\right]  \\
 &
=&{\Bbbk} 1\oplus\left[\bigoplus\limits_{C\in\mathcal{S}} \left({}^C H_1{}^1 \right)^+\right]
\supsetneq {\Bbbk} 1
\end{eqnarray*}
by Proposition \ref{2} (4). Thus we have $\left({}^C H_1{}^1 \right)^+\neq 0$ for some $C\in\mathcal{S}$.
\end{proof}

Note that if $H$ is a ${\Bbbk}$-Hopf algebra, then $H$ is cosemisimple if and only if $H\otimes_{\Bbbk} K$ is cosemisimple for any field extension $K/{\Bbbk}$ (R.G. Larson \cite[Lemma 1.3]{Larson 1971}). One observe that the exponent of $H$ is also invariant under field extensions by Proposition \ref{5}.

\begin{theorem}\label{9}
Let $H$ be a non-cosemisimple Hopf algebra over a field ${\Bbbk}$ of characteristic $0$. If the coradical $H_0$ is a finite-dimensional sub\-Hopf\-algebra (the Chevalley property), then there exists an element in $H_1{}^1$ having infinite Hopf order. Thus ${\rm exp}(H)=\infty$.
\end{theorem}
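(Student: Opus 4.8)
\emph{Strategy.} Since the exponent and the hypotheses are preserved under base field extension (Proposition~\ref{5}, \cite[Lemma~1.3]{Larson 1971}, and the stability of the coradical and of the Chevalley property over a perfect field), I may assume $\Bbbk$ algebraically closed. The plan is to realise an element of $H_1{}^1$ with non-vanishing Hopf powers as an entry of a suitable multiplicative matrix built from Theorem~\ref{7}.

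\emph{Setting up the matrix.} By Proposition~\ref{4} fix $C\in\mathcal S$ with $({}^C H_1{}^1)^+\neq 0$, a nonzero $w\in({}^C H_1{}^1)^+$, and a basic multiplicative matrix $\mathcal C=(c_{i'i})_{r\times r}$ of $C$. Applying Theorem~\ref{7} with $D=\Bbbk 1$ (basic multiplicative matrix $(1)$) gives $w=\sum_{i=1}^{r}w^{(i,1)}_{i1}$ in both cases (when $C=\Bbbk 1$ the correction term of Theorem~\ref{7}(2) lies in $\Bbbk1\cap\ker\varepsilon=0$), so some $\mathcal W:=\mathcal W^{(i_0,1)}=(w_i)_{i=1}^r\neq 0$ is a $(\mathcal C,(1))$-primitive matrix with entries in $({}^CH_1{}^1)^+\subseteq H_1{}^1$, i.e. $\Delta(w_i)=\sum_k c_{ik}\otimes w_k+w_i\otimes 1$. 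By the Remark after the definition of primitive matrices, $\mathcal M=\left(\begin{smallmatrix}\mathcal C&\mathcal W\\0&1\end{smallmatrix}\right)$ is a multiplicative matrix of order $r+1$ over $H$.

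\emph{Hopf powers of $\mathcal M$ and the order of $\mathcal C$.} By Proposition~\ref{19}(1) and a block computation, $\mathcal M^{[n]}=\mathcal M^n=\left(\begin{smallmatrix}\mathcal C^n& N_n\mathcal W\\0&1\end{smallmatrix}\right)$ with $N_n=\sum_{\ell=0}^{n-1}\mathcal C^\ell$, so $(w_i)^{[n]}=(N_n\mathcal W)_i$ for all $i,n$. Since $H_0$ is a finite–dimensional cosemisimple Hopf algebra and $\operatorname{char}\Bbbk=0$, $H_0$ is semisimple and involutory (Larson–Radford), hence $\exp(H_0)=:e<\infty$; as $\mathcal C$ is multiplicative over $H_0$ and its entries lie in $H_0$, Proposition~\ref{19}(1) gives $\mathcal C^e=\mathcal C^{[e]}=(c_{ij}^{[e]})=(\varepsilon(c_{ij})1)=I_r$. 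Let $m:=\operatorname{ord}(\mathcal C)\le e$; then $\{I_r,\mathcal C,\dots,\mathcal C^{m-1}\}$ is cyclic, $\mathcal C N_m=N_m$, and $N_{n+km}=N_n+kN_m$, so
$$(w_i)^{[n+km]}=(N_n\mathcal W)_i+k\,(N_m\mathcal W)_i .$$

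\emph{The key step (main obstacle).} It remains to prove that $N_n\mathcal W\neq 0$ for every $n\ge 1$ — equivalently $\mathcal M^n\neq I_{r+1}$. Granting this, the displayed formula shows that for each $n$ some entry $w_i$ (more robustly, a suitably generic $\sum_i\lambda_iw_i\in H_1{}^1$, using that a countable family of proper subspaces cannot cover a vector space over a large enough field) avoids $\ker\big([n]-u\varepsilon\big)$, yielding an element of $H_1{}^1$ of infinite Hopf order and $\exp(H)=\infty$; for $C=\Bbbk1$ this is just the classical fact that a nonzero primitive $p$ satisfies $p^{[n]}=np\neq 0$. To prove $N_n\mathcal W\neq 0$ I would pass to the associated graded Hopf algebra $\operatorname{gr}H$ (with $(\operatorname{gr}H)_0=H_0$), where the images $\overline{w_i}$ are nonzero (the $w_i$ lie outside $H_0$ when $C\neq\Bbbk1$, by ${}^CH_0{}^1=0$) and $\overline{N_n\mathcal W}$ lies in $(\operatorname{gr}H)_1=H_1/H_0$. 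In the pointed case $C=\Bbbk g$ with $g^m=1$, the entries of $\mathcal C^{\ell}\mathcal W$ are skew primitives of the pairwise distinct types $(g^{\ell+1},g^{\ell})$ for $\ell=0,\dots,m-1$, hence lie in distinct Taft–Wilson summands of $H_1/H_0$, so $N_m\mathcal W$ (and likewise each $N_n\mathcal W$) has nonzero image. For general $C$ the same outline applies using the $H_0$-bicomodule decomposition of $H_1/H_0$ (or the Radford biproduct $\operatorname{gr}H=B\#H_0$, under which the $\overline{w_i}$ are primitives of the braided Hopf algebra $B$), tracking the left $H_0$-coaction weights $C,C^2,\dots$ of the blocks $\mathcal C^\ell\mathcal W$; controlling the overlaps among these weights is the delicate point, and is where the bulk of the work lies.
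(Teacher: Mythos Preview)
Your setup through the block identity $\mathcal{W}^{[n]}=N_n\mathcal{W}$ with $N_n=\sum_{\ell=0}^{n-1}\mathcal{C}^\ell$ coincides with the paper's. The divergence---and the gap---is in the ``key step'' $N_n\mathcal{W}\neq 0$. You propose to separate the summands $\mathcal{C}^\ell\mathcal{W}$ by their $H_0$-bicomodule weights in $\operatorname{gr}H$, and you yourself flag that for general $C$ ``controlling the overlaps among these weights is the delicate point, and is where the bulk of the work lies.'' This is a genuine obstacle: when $C$ is not one-dimensional the coalgebra products $C,C^2,\dots$ need not be simple and their simple constituents can repeat arbitrarily, so there is no Taft--Wilson-type direct-sum decomposition that cleanly separates the blocks $\mathcal{C}^\ell\mathcal{W}$. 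Your periodicity relation $N_{n+km}=N_n+kN_m$ (via $\mathcal{C}^e=I_r$) is correct but does not by itself force $N_n\mathcal{W}\neq 0$. So the proposal is incomplete for the theorem as stated.

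The paper closes this with a short device you are missing: left-multiply by a nonzero two-sided integral $\Lambda_0$ of the finite-dimensional semisimple Hopf algebra $H_0$. Since $\Lambda_0 h=\varepsilon(h)\Lambda_0$ for all $h\in H_0$, one has $\Lambda_0\mathcal{C}^\ell=\Lambda_0 I_r$ for every $\ell$, hence
\[
\Lambda_0\,\mathcal{W}^{[n]}=\Lambda_0\,N_n\mathcal{W}=n\,\Lambda_0\mathcal{W},
\]
and in characteristic $0$ it suffices to check $\Lambda_0\mathcal{W}\neq 0$. Writing $\Lambda_0=1+\sum_{D\neq\Bbbk 1}r_D t_D$ as in Proposition~\ref{8}(3) and using that the coradical filtration is an \emph{algebra} filtration (this is where the Chevalley property enters), one finds $(t_D x)^1\in H_0$ for each nonzero entry $x\in({}^C H_1{}^1)^+$ of $\mathcal{W}$ and each $D\neq\Bbbk 1$; hence $(\Lambda_0 x)^1\in x+H_0$, which is nonzero because ${}^C H_1{}^1\cap H_0={}^C H_0{}^1=0$ for $C\neq\Bbbk 1$. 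This integral trick replaces your weight-tracking entirely and, as a bonus, directly exhibits a single entry $x$ of $\mathcal{W}$ with $x^{[n]}\neq 0$ for all $n$, so the generic-linear-combination step in your outline becomes unnecessary.
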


\begin{proof}
Without loss of generality, we assume that ${\Bbbk}$ is algebraically closed, and then every simple subcoalgebra of $H$ has a basic multiplicative matrix. The non-cosemisimplity of $H$ implies that there exists a simple subcoalgebra $C\in\mathcal{S}$ such that $\left({}^C H_1{}^1\right)^+\neq 0$ by Proposition \ref{4}.

If $C={\Bbbk} 1$, then we choose a non-zero element $w\in\left({}^1 H_1{}^1\right)^+$. It's straightforward to verify that $\Delta(w)=1\otimes w+w\otimes 1$ (in fact, $\left({}^1 H_1{}^1\right)^+$ consists of all primitive elements). Thus $w^{[n]}=nw\neq 0$ for each positive integer $n$.

 If $C\neq{\Bbbk} 1$, we need to find an element with infinite Hopf order. Choose an $r\times r$ basic multiplicative matrix $\mathcal{C}=(c_{ij})$ of $C$, and choose a non-zero element $w\in \left({^C}H_1^{~1}\right)^+$. By Theorem \ref{7} (1), $w$ is the sum of $r$ elements, each of which is an entry of a $(\mathcal{C},1)$-primitive matrix. So one of those $(\mathcal{C},1)$-primitive matrices is not zero. We denoted this matrix by $\mathcal{W}\in\mathcal{M}_{r\times 1}({}^C H_1{}^1)$. Apparently,
$\left(
\begin{array}{cc}
  \mathcal{C} & \mathcal{W} \\
  0 & 1
\end{array}
\right)$
is multiplicative. So its $n$th Hopf power is
$$
\left(
\begin{array}{cc}
  \mathcal{C} & \mathcal{W} \\
  0 & 1
\end{array}
\right)^{[n]}
=
\left(
\begin{array}{cc}
  \mathcal{C} & \mathcal{W} \\
  0 & 1
\end{array}
\right)^n
=
\left(
\begin{array}{cc}
  \mathcal{C}^n & (I_r+\mathcal{C}+\mathcal{C}^2+\cdots+\mathcal{C}^{n-1})\mathcal{W} \\
  0 & 1
\end{array}
\right).
$$
by Proposition \ref{19} (1).

It remains to show that $\mathcal{W}^{[n]}=(I_r+\mathcal{C}+\mathcal{C}^2+\cdots+\mathcal{C}^{n-1})\mathcal{W}\neq 0=\varepsilon(\mathcal{W})$ for each positive integer $n$. Consider the finite-dimensional sub\-Hopf\-algebra $H_0$. It is semisimple since it is cosemisimple and ${\rm char}~{\Bbbk}=0$. Hence $H_0$ is involutory by \cite[Theorem 3]{L-R 1998} and unimodular. Using Proposition \ref{8} (3), we have a non-zero integral $\Lambda_0=1+\sum\limits_{D\in\mathcal{S}\setminus\{{\Bbbk} 1\}} r_D t_D$ of $H_0$, where $r_D$ is a positive integer and $t_D\in D$ for each $D\in\mathcal{S}\setminus\{{\Bbbk} 1\}$.

Since $\Lambda_0$ is a right integral of $H_0$, then
\begin{eqnarray*}
\Lambda_0 \mathcal{W}^{[n]}  &
=& \Lambda_0 (I_r+\mathcal{C}+\mathcal{C}^2+\cdots+\mathcal{C}^{n-1})\mathcal{W}  \\
&  =& \left( \sum\limits_{i=0}^{n-1} \Lambda_0 \mathcal{C}^i \right)\mathcal{W}  \\
&  = &\left( \sum\limits_{i=0}^{n-1} \Lambda_0 I_r\right)\mathcal{W}  \\
&  =& n\Lambda_0 \mathcal{W}.
\end{eqnarray*}
So it suffices to show that $\Lambda_0\mathcal{W}\neq 0$.

Let $x\in \left({}^C H_1{}^1 \right)^+$ be a non-zero entry of $\mathcal{W}$. We will show that $\Lambda_0 x\neq 0$. First by $\Delta(x)\in {}^C H_1{}^1 \otimes 1 + C \otimes {}^C H_1{}^1$,
\begin{eqnarray*}
\Delta \left( t_D x \right)   &
\in& D ({}^C H_1{}^1) \otimes D + DC \otimes D ({}^C H_1{}^1)   \\
 &
\subseteq& H_1 \otimes D + H_0 \otimes H_1 ,
\end{eqnarray*}
$\forall D\in\mathcal{S}\setminus\{{\Bbbk} 1\}$,
where the last inclusion holds since $\{H_n\}_{n=0}^\infty$ is also an algebra filtration, which is followed by the Chevalley property of $H$ (Montgomery \cite[Lemma 5.2.8]{Montgomery 1993}).
Then for any
$D\in\mathcal{S}\setminus\{{\Bbbk} 1\},~\left( t_D x \right)^1 \subseteq H_1 \langle e_{{\Bbbk}1},D\rangle + H_0\langle e_{{\Bbbk}1},H_1\rangle \subseteq H_0$.
Thus
\begin{eqnarray*}
(\Lambda_0 x)^1   &
=&[ ( 1_H + \sum\limits_{D\in \mathcal{S}\setminus\{{\Bbbk} 1\}} r_D t_D ) x ]^1 \\
 &
= &x^1 + \sum\limits_{D\in \mathcal{S}\setminus\{{\Bbbk} 1\}} r_D ( t_D x )^1   \\
 &
\subseteq& x + H_0.
\end{eqnarray*}
But $0 \neq x \in {}^C H_1{}^1$ implies $0 \notin x + H_0$ because $H_0\oplus {}^C H_1{}^1$ is direct by Proposition \ref{2}(3). So $\Lambda_0 x \neq 0$, and $\Lambda_0 \mathcal{W}^{[n]}=n\Lambda_0\mathcal{W}\neq 0$ holds for each positive integer $n$. The proof is completed.
\end{proof}

\section{The Exponent of a Finite-Dimensional (Non-Co\-semi\-simple) Pointed Hopf Algebra in Positive Characteristic}

We prove in this section that the exponent of a finite-dimensional pointed Hopf algebra $H$ in characteristic $p>0$ is finite. The proof bases on decomposing an arbitrary element as a sum of elements, each of which is an entry of a multiplicative matrix.

First, it should be pointed out that the definition of multiplicative matrices over a coalgebra $H$ can be extended to block case. A block matrix
$$
\mathcal{G}=
\left(
  \begin{array}{cccc}
    \mathcal{G}_{11} & \mathcal{G}_{12} & \cdots & \mathcal{G}_{1r}     \\
    \mathcal{G}_{21} & \mathcal{G}_{22} & \cdots & \mathcal{G}_{2r}     \\
    \vdots & \vdots & \ddots & \vdots     \\
    \mathcal{G}_{r1} & \mathcal{G}_{r2} & \cdots & \mathcal{G}_{rr}
  \end{array}
\right)
$$
over $H$ is called a \emph{block multiplicative matrix} if $\mathcal{G}_{ii}$ is a square block for each $1\leq i\leq r$ and
$\Delta(\mathcal{G}_{ij})=\sum\limits_{i=1}^r \mathcal{G}_{ik}\widetilde{\otimes}\mathcal{G}_{kj}$, $\varepsilon(\mathcal{G}_{ij})=\left\{\begin{array}{ll} \text{the identity matrix}\ I,& \text{if}\  i=j\\
\text{the zero matrix}\ 0, & \text{if}\  i\neq j
\end{array}\right.$ for each $1\leq i,j\leq r$. It is straightforward to verify that a block matrix is a block multiplicative matrix if and only if it is a usual multiplicative matrix. The following proposition shows that any upper triangular block multiplicative matrix over a bialgebra has finite order or Hopf order.

\begin{proposition}\label{10}
Let $H$ be a ${\Bbbk}$-bialgebra, where ${\rm char}~{\Bbbk}=p>0$. Assume
$$
\mathcal{Z}=
\left(
  \begin{array}{cccccc}
    \mathcal{G}_0 & \mathcal{W}_{0\,1} & \mathcal{W}_{0\,2} & \cdots & \mathcal{W}_{0\,n-1} & \mathcal{W}_{0\,n}       \\
    0   & \mathcal{G}_1    & \mathcal{W}_{1\,2} & \cdots & \mathcal{W}_{1\,n-1} & \mathcal{W}_{1\,n}    \\
    0   & 0      & \mathcal{G}_2    & \cdots & \mathcal{W}_{2\,n-1} & \mathcal{W}_{2\,n}    \\
    \vdots & \vdots & \vdots &\ddots & \vdots & \vdots     \\
    0   & 0      & 0      & \cdots & \mathcal{G}_{n-1}   & \mathcal{W}_{n-1\,n} \\
    0   & 0      & 0      & \cdots & 0         & \mathcal{G}_n
  \end{array}
\right)
$$
is a multiplicative block matrix over $H$. If there exists a positive integer $d$ such that $\mathcal{G}_0^{~d}=I,\mathcal{G}_1^{~d}=I,\ldots,$ and $\mathcal{G}_n^{~d}=I$, then $\mathcal{Z}^{\left[ d p^{ \lfloor \log_p n \rfloor +1 } \right]}=\mathcal{Z}^{d p^{ \lfloor \log_p n \rfloor +1 }}=I$, where $\lfloor \cdot \rfloor$ denotes the floor function.
\end{proposition}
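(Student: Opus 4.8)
The plan is to reduce everything to an elementary computation in the matrix ring $\mathcal{M}_N(H)$, where $N$ is the total size of $\mathcal{Z}$, combining the block-upper-triangular shape of $\mathcal{Z}$ with the characteristic-$p$ Frobenius. The first step is to note that, since $\mathcal{Z}$ is a multiplicative matrix, Proposition~\ref{19}(1) gives $\mathcal{Z}^{[m]}=m_m\circ\Delta_m(\mathcal{Z})=\mathcal{Z}^m$ for every positive integer $m$; this yields the first equality in the statement and reduces the problem to proving the purely multiplicative identity $\mathcal{Z}^{dp^{\lfloor\log_p n\rfloor+1}}=I$ in $\mathcal{M}_N(H)$.

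Next I would exploit the shape of $\mathcal{Z}$. Writing $\mathcal{Z}=\mathcal{D}+\mathcal{N}$ with $\mathcal{D}=\mathrm{diag}(\mathcal{G}_0,\ldots,\mathcal{G}_n)$ the block-diagonal part and $\mathcal{N}$ the strictly block-upper-triangular part, the product of block-upper-triangular matrices is again block-upper-triangular, with diagonal blocks the products of the corresponding diagonal blocks. Hence $\mathcal{Z}^d$ is block-upper-triangular with $i$-th diagonal block $\mathcal{G}_i^{~d}=I$, so $\mathcal{Z}^d=I+\mathcal{M}$ with $\mathcal{M}\in\mathcal{M}_N(H)$ strictly block-upper-triangular across the $n+1$ diagonal blocks. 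An entry-chase then shows $\mathcal{M}^{n+1}=0$: a nonzero $(i,j)$-block of $\mathcal{M}^m$ forces a chain $i<i_1<\cdots<i_{m-1}<j$ among the block indices $0,\ldots,n$, whence $m\le j-i\le n$.

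Finally I would invoke the Frobenius. Since $I$ is central in $\mathcal{M}_N(H)$, the binomial theorem gives $(I+\mathcal{M})^{p^k}=\sum_{j=0}^{p^k}\binom{p^k}{j}\mathcal{M}^j$, and in characteristic $p$ one has $\binom{p^k}{j}\equiv0\pmod p$ for $0<j<p^k$ (equivalently $(1+x)^{p^k}=1+x^{p^k}$ in $\mathbb{F}_p[x]$), so $\mathcal{Z}^{dp^k}=(\mathcal{Z}^d)^{p^k}=(I+\mathcal{M})^{p^k}=I+\mathcal{M}^{p^k}$ for every $k$. Taking $k=\lfloor\log_p n\rfloor+1$ gives $p^k>n$, hence $p^k\ge n+1$, so $\mathcal{M}^{p^k}=0$ by the previous paragraph and $\mathcal{Z}^{dp^k}=I$, as required.

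I do not anticipate a genuine obstacle: the argument is mostly bookkeeping. The one point that needs care is matching the nilpotency index $n+1$ with the power of $p$ in the statement, i.e. checking $p^{\lfloor\log_p n\rfloor+1}>n$ so that this power of $\mathcal{M}$ already vanishes; a secondary point is to be careful that all manipulations take place in the noncommutative ring $\mathcal{M}_N(H)$, so that only the centrality of $I$ --- not any commutativity of $H$ --- is used in the binomial expansion.
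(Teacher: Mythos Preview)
Your proposal is correct and follows essentially the same approach as the paper's proof: compute $\mathcal{Z}^d=I+\mathcal{M}$ with $\mathcal{M}$ strictly block-upper-triangular (hence $\mathcal{M}^{n+1}=0$), then use the characteristic-$p$ Frobenius $(I+\mathcal{M})^{p^k}=I+\mathcal{M}^{p^k}$ with $k=\lfloor\log_p n\rfloor+1$, and invoke Proposition~\ref{19}(1) for the equality $\mathcal{Z}^{[m]}=\mathcal{Z}^m$. The only differences are expository --- you spell out the nilpotency chain argument and the binomial reasoning in more detail, and you apply Proposition~\ref{19}(1) at the start rather than at the end.
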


\begin{proof}
It is clear that
$$
\mathcal{Z}^d=
\left(
  \begin{array}{cccccc}
    I   & \mathcal{W}^{\prime}_{0\,1} & \mathcal{W}^{\prime}_{0\,2} & \cdots & \mathcal{W}^{\prime}_{0\,n-1} & \mathcal{W}^{\prime}_{0\,n}    \\
    0   & I                           & \mathcal{W}^{\prime}_{1\,2} & \cdots & \mathcal{W}^{\prime}_{1\,n-1} & \mathcal{W}^{\prime}_{1\,n}    \\
    0   & 0                           & I                           & \cdots & \mathcal{W}^{\prime}_{2\,n-1} & \mathcal{W}^{\prime}_{2\,n}    \\
    \vdots & \vdots & \vdots &\ddots & \vdots & \vdots     \\
    0   & 0                           & 0                           & \cdots & I                             & \mathcal{W}^{\prime}_{n-1\,n}  \\
    0   & 0                           & 0                           & \cdots & 0                             & I
  \end{array}
\right),
$$
where $\mathcal{W}^{\prime}_{i\,j}$ have the same sizes as $\mathcal{W}_{i\,j}$, for all $0\leq i\leq j\leq n$. Set
$$
\mathcal{W}=
\left(
  \begin{array}{cccccc}
    0   & \mathcal{W}^{\prime}_{0\,1} & \mathcal{W}^{\prime}_{0\,2} & \cdots & \mathcal{W}^{\prime}_{0\,n-1} & \mathcal{W}^{\prime}_{0\,n}    \\
    0   & 0                           & \mathcal{W}^{\prime}_{1\,2} & \cdots & \mathcal{W}^{\prime}_{1\,n-1} & \mathcal{W}^{\prime}_{1\,n}    \\
    0   & 0                           & 0                           & \cdots & \mathcal{W}^{\prime}_{2\,n-1} & \mathcal{W}^{\prime}_{2\,n}    \\
    \vdots & \vdots & \vdots &\ddots & \vdots & \vdots     \\
    0   & 0                           & 0                           & \cdots & 0                             & \mathcal{W}^{\prime}_{n-1\,n}  \\
    0   & 0                           & 0                           & \cdots & 0                             & 0
  \end{array}
\right),
$$
and it is easy to show that $\mathcal{W}^{n+1}=0$. Thus
$$
\mathcal{Z}^{d p^{ \lfloor \log_p n \rfloor +1 }}=(I+\mathcal{W})^{p^{ \lfloor \log_p n \rfloor +1 }}=I+\mathcal{W}^{p^{ \lfloor \log_p n \rfloor +1 }}=I,
$$
since ${\rm char}~{\Bbbk}=p$ and $p^{ \lfloor \log_p n \rfloor +1 }>n$. Finally
$\mathcal{Z}^{\left[ d p^{ \lfloor \log_p n \rfloor +1 } \right]}=\mathcal{Z}^{d p^{ \lfloor \log_p n \rfloor +1 }}$
 follows from that $\mathcal{Z}$ is multiplicative by Proposition \ref{19} (1).
\end{proof}

However one cannot expect, like  $\mathcal{W}_{0\,n}$ in the Proposition \ref{10}, that any element in $H_n$ can be expressed as an entry of the top right corner of one  $n\times n$ upper triangular multiplicative (block) matrix over $H$ or sums of the top right corner of finite number of such matrices. As we will see, we have to embedded $H$ into a larger coalgebra. This technique relies on the notion of free bialgebras on coalgebras (for knowledge of free bialgebras, see Radford \cite[Definition 5.3.2]{Radford 2012}).

A \emph{free bialgebra on the coalgebra} $H$ over ${\Bbbk}$ is a pair $(\iota_H,T(H))$ which satisfies:
\begin{enumerate}
\item $T(H)$ is a ${\Bbbk}$-bialgebra and $\iota_H:H\hookrightarrow T(H)$ is a coalgebra map;
\item If $A$ is a ${\Bbbk}$-bialgebra and $f:H\rightarrow A$ is a coalgebra map, then there exists a unique bialgebra map $F:T(H)\rightarrow A$ such that $F\circ\iota_H=f$.
\end{enumerate}

In fact, a free bialgebra on $(H,\Delta,\varepsilon)$ can be chosen as $T(H)=\bigoplus\limits_{n=0}^\infty H^{\otimes n}$, which is the tensor algebra on the vector space $H$. Comultiplication $\Delta_{T(H)}$ and counit $\varepsilon_{T(H)}$ on $T(H)$ are algebra maps determined by $\varepsilon_{T(H)}\circ\iota=\varepsilon$ and $\Delta_{T(H)}\circ\iota_H=(\iota_H\otimes\iota_H)\circ\varepsilon$ respectively.

\begin{lemma}\label{17}
Let $H$ be a bialgebra over ${\Bbbk}$. Let $T(H)=\bigoplus\limits_{n=0}^\infty H^{\otimes n}$ be the free bialgebra on $H$ as a coalgebra. Then
\begin{enumerate}
\item
There exists a unique bialgebra map $\pi_H:T(H)\twoheadrightarrow H$ making the following diagram commute:
$$
\xymatrix{
  H \ar[dr]_-{id_H} \ar@{^{(}->}[r]^-{\iota_H}
                & T(H) \ar[d]^-{\pi_H}  \\
                & H            }.
$$

In fact, $\pi_H(h_1\otimes h_2\otimes\cdots\otimes h_n)= h_1h_2\cdots h_n$ and $\pi_H(1)=1_H$, where $h_1,h_2,\cdots,h_n\in H$. Therefore $H\cong T(H)/{\rm Ker}~\pi_H$, and $\varphi:h\mapsto h+{\rm Ker~\pi_H}$ is a bialgebra isomorphism.
\item
Let a vector-space direct-sum $\overline{H}= H\oplus V$ be a ${\Bbbk}$-coalgebra, such that $H\subseteq\overline{H}$ is a subcoalgebra. Let $T(\overline{H})=\bigoplus\limits_{n=0}^\infty \overline{H}^{\otimes n}$ be the free bialgebra on $\overline{H}$. Then
    \begin{enumerate}[(i)]
    \item $T(\overline{H})({\rm Ker}~\pi_H)T(\overline{H})$ is a biideal of $T(\overline{H})$;
    \item $[T(\overline{H})({\rm Ker}~\pi_H)T(\overline{H})]\cap T(H)={\rm Ker}~\pi_H$.

    So the bialgebra inclusion $T(H)\hookrightarrow T(\overline{H})$ leads to the bialgebra monomorphism $\phi: T(H)/{\rm Ker}~\pi_H \hookrightarrow T(\overline{H})/[T(\overline{H})({\rm Ker}~\pi_H)T(\overline{H})]$.
    \item The following diagram commutes:
    $$
    {
    \xymatrix{
    H  \ar@{^{(}->}[d]_-{\iota_{\overline{H}}|_H}  \ar[r]^-\varphi  &  T(H)/({\rm Ker}~\pi_H)     \ar@{^{(}->}[d]^-\phi  \\
     T(\overline{H})  \ar[r]_-\theta   &  T(\overline{H})/\left[ T(\overline{H})({\rm Ker}~\pi_H)T(\overline{H})\right]             }
    }
    $$
    where $\varphi$ is the bialgebra isomorphism, $\varphi$ is the bialgebra isomorphism, $\phi$ is the bialgebra monomorphism, $\theta$ is the quotient bialgebra map, and $\iota_{\overline{H}}|_H$ is just a coalgebra inclusion.
    \end{enumerate}
\end{enumerate}
\end{lemma}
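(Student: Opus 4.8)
The plan is to work part (1) first, then mount part (2) on top of it. For part (1), the assertion is that $\pi_H$, defined as the unique bialgebra map extending $\mathrm{id}_H$ along $\iota_H$ (whose existence is guaranteed by the universal property of the free bialgebra $T(H)$ applied to the coalgebra map $\mathrm{id}_H:H\to H$ with target the bialgebra $H$), is given on homogeneous tensors by multiplication. First I would note that $\pi_H$ is by construction an algebra map, and $\pi_H\circ\iota_H=\mathrm{id}_H$; since $\iota_H(h)=h$ sits in degree-one component $H^{\otimes 1}$, we get $\pi_H(h)=h$ for $h\in H$ and $\pi_H(1_{T(H)})=1_H$ because algebra maps preserve units. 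Then $\pi_H(h_1\otimes\cdots\otimes h_n)=\pi_H(h_1)\cdots\pi_H(h_n)=h_1\cdots h_n$ since in $T(H)$ the element $h_1\otimes\cdots\otimes h_n$ is literally the product $\iota_H(h_1)\cdots\iota_H(h_n)$. Surjectivity of $\pi_H$ is immediate from $\pi_H\circ\iota_H=\mathrm{id}_H$, so the first isomorphism theorem for bialgebras gives $H\cong T(H)/\mathrm{Ker}\,\pi_H$ via $\varphi$; one just checks $\mathrm{Ker}\,\pi_H$ is a biideal, which is automatic since $\pi_H$ is a bialgebra map.

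For part (2)(i), $\mathrm{Ker}\,\pi_H$ is a coideal and a two-sided ideal in $T(H)$; the two-sided ideal $\mathcal{K}:=T(\overline H)(\mathrm{Ker}\,\pi_H)T(\overline H)$ it generates in $T(\overline H)$ is by construction a two-sided ideal, so I only need that it is a coideal. For this I would take a typical generator $a\,k\,b$ with $a,b\in T(\overline H)$ and $k\in\mathrm{Ker}\,\pi_H$, and compute $\Delta_{T(\overline H)}(a k b)=\Delta(a)\,\Delta(k)\,\Delta(b)$ using that $\Delta_{T(\overline H)}$ is an algebra map; since $\Delta(k)\in\mathrm{Ker}\,\pi_H\otimes T(H)+T(H)\otimes\mathrm{Ker}\,\pi_H$ (as $\mathrm{Ker}\,\pi_H$ is a coideal of $T(H)$, and $T(H)\subseteq T(\overline H)$ as subcoalgebra since $H\subseteq\overline H$ is a subcoalgebra), multiplying on both sides by $\Delta(a)$ and $\Delta(b)$ lands in $\mathcal{K}\otimes T(\overline H)+T(\overline H)\otimes\mathcal{K}$. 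Similarly $\varepsilon(akb)=\varepsilon(a)\varepsilon(k)\varepsilon(b)=0$ because $\varepsilon(k)=0$ for $k\in\mathrm{Ker}\,\pi_H$ (as $\pi_H$ preserves counits and $\varepsilon_H\circ\pi_H=\varepsilon_{T(H)}$). Hence $\mathcal{K}$ is a biideal.

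The genuinely delicate point is part (2)(ii), the identity $\mathcal{K}\cap T(H)=\mathrm{Ker}\,\pi_H$; the inclusion $\supseteq$ is trivial, so the content is $\subseteq$. Here I expect the main obstacle: one must show that adjoining the extra generators from $V=\overline H/H$ (sitting in degree one of $T(\overline H)$) does not create new elements of the ideal that happen to land back inside the subalgebra $T(H)$. The clean way is to exhibit a vector-space complement. Choose a basis of $\overline H$ extending a basis of $H$, so that $T(\overline H)$ has the PBW-type monomial basis indexed by words in the basis letters of $H$ and of $V$. I would argue that $\mathrm{Ker}\,\pi_H$, being a coideal-ideal of $T(H)$, admits a graded vector-space complement $\mathcal{R}\subseteq T(H)$ (so $T(H)=\mathrm{Ker}\,\pi_H\oplus\mathcal{R}$, and in fact $\mathcal{R}\cong H$ via $\varphi$), and then show that $T(\overline H)=\mathcal{K}\oplus\mathcal{R}'$ where $\mathcal{R}'$ is spanned by those monomials that use at least one $V$-letter, together with $\mathcal{R}$. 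Concretely: any word $w$ in $T(\overline H)$ either contains a $V$-letter, in which case it is already a basis element not involving any reduction, or it lies in $T(H)$, in which case $w\equiv \varphi^{-1}\pi_H(w)\pmod{\mathcal{K}}$ with $\varphi^{-1}\pi_H(w)\in\mathcal{R}$. This shows $T(\overline H)=\mathcal{K}+(\mathcal{R}'\oplus\mathcal{R})$; intersecting with $T(H)$ and using that monomials with a $V$-letter are linearly independent from $T(H)$ kills the $\mathcal{R}'$ part, leaving $\mathcal{K}\cap T(H)\subseteq\mathrm{Ker}\,\pi_H$ as desired. The monomorphism $\phi$ then follows formally, and the commutativity of the diagram in (2)(iii) is a diagram chase: starting from $h\in H$, going right-then-down gives $\varphi(h)=h+\mathrm{Ker}\,\pi_H\mapsto\phi(h+\mathrm{Ker}\,\pi_H)=h+\mathcal{K}$ (the image under the inclusion $T(H)\hookrightarrow T(\overline H)$ modulo $\mathcal{K}$), while going down-then-right gives $\iota_{\overline H}(h)=h\mapsto\theta(h)=h+\mathcal{K}$, and these agree.
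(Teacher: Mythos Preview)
Your arguments for part (1), part (2)(i), and part (2)(iii) are correct and essentially match the paper's approach.

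There is, however, a genuine gap in your treatment of part (2)(ii). The direct-sum decomposition $T(\overline{H})=\mathcal{K}\oplus\mathcal{R}'$ (with $\mathcal{R}'$ spanned by $\mathcal{R}$ together with all monomials containing a $V$-letter) does not hold. Take any nonzero $v\in V$ and nonzero $k\in\mathrm{Ker}\,\pi_H$; the product $v\otimes k\in T(\overline{H})$ lies in $\mathcal{K}=T(\overline{H})(\mathrm{Ker}\,\pi_H)T(\overline{H})$ by definition, but it is also a linear combination of monomials each beginning with the $V$-letter $v$, hence lies in $\mathcal{R}'$. So $\mathcal{K}\cap\mathcal{R}'\neq 0$. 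Consequently your spanning argument $T(\overline{H})=\mathcal{K}+(\mathcal{R}'\oplus\mathcal{R})$ cannot be upgraded to the direct sum you need, and the final step ``intersecting with $T(H)$ kills the $\mathcal{R}'$ part'' does not go through: intersection with a subspace does not distribute over a non-direct sum.

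The paper's fix is short and in the spirit of your $V$-monomial intuition. Let $\eta:\overline{H}=H\oplus V\to H$ be the linear projection killing $V$, and extend it to the algebra map $\overline{\eta}:T(\overline{H})\to T(H)$ determined by $\overline{\eta}\circ\iota_{\overline{H}}=\iota_H\circ\eta$. Then $\overline{\eta}|_{T(H)}=\mathrm{id}_{T(H)}$, and for $a_i,b_i\in T(\overline{H})$, $h_i\in\mathrm{Ker}\,\pi_H$ one has $\overline{\eta}\bigl(\sum_i a_i\, h_i\, b_i\bigr)=\sum_i \overline{\eta}(a_i)\,h_i\,\overline{\eta}(b_i)\in T(H)(\mathrm{Ker}\,\pi_H)T(H)=\mathrm{Ker}\,\pi_H$. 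Hence if $x\in\mathcal{K}\cap T(H)$ then $x=\overline{\eta}(x)\in\mathrm{Ker}\,\pi_H$, which is exactly the inclusion you need. The point you were missing is that the projection ``kill the $V$-letters'' is \emph{multiplicative}, and it is this multiplicativity (rather than any direct-sum complement of $\mathcal{K}$) that transports the two-sided ideal generated by $\mathrm{Ker}\,\pi_H$ back into $\mathrm{Ker}\,\pi_H$.
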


\begin{proof}\hspace{1ex}\\
\begin{enumerate}
\item Regard $id_H$ as a coalgebra epimorphism from coalgebra $H$ onto bialgebra $H$. Then by the universal property of free bialgebras, there exists a unique surjective bialgebra map $\pi_H:T(H)\rightarrow H$ such that $\pi_H\circ\iota_H=id_H$.
\item     \begin{enumerate}[(i)]
    \item First we show that $T(\overline{H})({\rm Ker}~\pi_H)T(\overline{H})\subseteq T(\overline{H})$ is a coideal and thus a biideal. As ${\rm Ker}~\pi_H$ is a coideal of $T(H)$, we have $\varepsilon_{T(H)}({\rm Ker}~\pi_H)=0$ and then $\varepsilon_{T(\overline{H})}({\rm Ker}~\pi_H)=0$. Since $\varepsilon_{T(\overline{H})}$ is an algebra map, which follows that
        $$\varepsilon_{T(\overline{H})}(T(\overline{H})({\rm Ker}~\pi_H)T(\overline{H}))=0.$$

        On the other hand, it is clear that $T(\overline{H})({\rm Ker}~\pi_H)T(\overline{H})$ is an ideal of $T(\overline{H})$, so the quotient map $\theta:T(\overline{H})\twoheadrightarrow T(\overline{H})/[T(\overline{H})({\rm Ker}~\pi_H)T(\overline{H})]$ is an algebra epimorphism. Since $\theta$ and $\Delta_{T(\overline{H})}$ are both algebra maps, then
        \begin{eqnarray*}
        && (\theta\otimes\theta)\circ\Delta_{T(\overline{H})}(T(\overline{H})({\rm Ker}~\pi_H)T(\overline{H})) \\
        &\subseteq& [T(\overline{H})\otimes T(\overline{H})][(\theta\otimes\theta)\circ\Delta_{T(\overline{H})}({\rm Ker}~\pi_H)][T(\overline{H})\otimes T(\overline{H})] \\
        &=& [T(\overline{H})\otimes T(\overline{H})][(\theta\otimes\theta)\circ\Delta_{T(H)}({\rm Ker}~\pi_H)][T(\overline{H})\otimes T(\overline{H})] \\
        &=&0,
        \end{eqnarray*}
        where the last equation holds also because ${\rm Ker}~\pi_H$ is a coideal of $T(H)$ which implies
        $$\Delta_{T(\overline{H})}({\rm Ker}~\pi_H)=\Delta_{T(H)}({\rm Ker}~\pi_H)\subseteq {\rm Ker}~\pi_H\otimes T(H)+T(H)\otimes{\rm Ker}~\pi_H,$$ and $\theta({\rm Ker}~\pi_H)=0$. Hence $T(\overline{H})({\rm Ker}~\pi_H)T(\overline{H})$ is a coideal of $T(\overline{H})$.

    \item
        Let $\eta$ denote the ${\Bbbk}$-linear projection from $\overline{H}= H\oplus V$ onto $H$. Then by the universal property, there exists a unique algebra epimorphism $\overline{\eta}:T(\overline{H})\twoheadrightarrow T(H)$ making the following diagram commute ($\overline{\eta}$ will be a bialgebra map if $V$ is in addition a coideal of $\overline{H}$):
        $$
        {
        \xymatrix{
        \overline{H}  \ar[d]_-{\eta}  \ar@{^{(}->}[r]^-{\iota_{\overline{H}}}  &  T(\overline{H})  \ar[d]^-{\overline{\eta}}  \\
        H  \ar@{^{(}->}[r]_-{\iota_H}  &  T(H)             }
        },
        $$
        In fact $\overline{\eta}(a_1\otimes a_2\otimes\cdots\otimes a_n)=\eta(a_1)\otimes \eta(a_2)\otimes\cdots\otimes \eta(a_n)$, where $a_1,a_2,\cdots,a_n\in\overline{H}$. Moreover,  $\overline{\eta}|_{T(H)}=id_{T(H)}$ if we regard $T(H)\subseteq T(\overline{H})$ which is a bialgebra inclusion.

        In order to show that $[T(\overline{H})({\rm Ker}~\pi_H)T(\overline{H})]\cap T(H)={\rm Ker}~\pi_H$, we only need to prove ${\rm Ker}~\pi_H\subseteq [T(\overline{H})({\rm Ker}~\pi_H)T(\overline{H})]\cap T(H)$. Let $a\triangleq \sum\limits_{i=1}^n a_i\otimes h_i\otimes b_i$ be an arbitrary element in $T(\overline{H})({\rm Ker}~\pi_H)T(\overline{H})$, where $a_i,b_i\in T(\overline{H})$ and $h_i\in{\rm Ker}~\pi_H$ for  $1\leq i\leq n$. Then
        $$
        \overline{\eta}(a)
        = \overline{\eta}\left( \sum\limits_{i=1}^n a_i\otimes h_i\otimes b_i\right)
        =\sum\limits_{i=1}^n \overline{\eta}(a_i)\otimes \overline{\eta}(h_i)\otimes \overline{\eta}(b_i)
        =\sum\limits_{i=1}^n \overline{\eta}(a_i)\otimes h_i\otimes \overline{\eta}(b_i) .
        $$
        So if $a\in T(H)$, then
        $\pi_H(a)=\sum\limits_{i=1}^n [\pi_H\circ\overline{\eta}(a_i)]\otimes\pi_H(h_i)\otimes[\pi_H\circ\overline{\eta}(b_i)]=0$. Thus $[T(\overline{H})({\rm Ker}~\pi_H)T(\overline{H})]\cap T(H)\subseteq {\rm Ker}~\pi_H$, and the equation then follows.
    \item It is straight forward to verify that the diagram commutes.
    \end{enumerate}
\end{enumerate}
\end{proof}

\begin{lemma}\label{18}
Let $H$ be a coalgebra. Assume for each $i\in\Gamma$, $H_{\langle i\rangle}=H\oplus V_{\langle i\rangle}$ is a coalgebra such that the inclusion $H\subseteq H_{\langle i\rangle}$ is a coalgebra map.
Then we can make $\overline{H}\triangleq H\oplus\left(\bigoplus\limits_{i\in\Gamma} V_{\langle i\rangle}\right)$  a coalgebra such that the inclusion $H_{\langle i\rangle}\subseteq \overline{H}$ is a coalgebra map, for each $i\in\Gamma$.
\end{lemma}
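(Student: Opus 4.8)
The plan is to build a coalgebra structure on $\overline{H}=H\oplus\bigl(\bigoplus_{i\in\Gamma}V_{\langle i\rangle}\bigr)$ directly and piecewise along this direct-sum decomposition, and then verify the coalgebra axioms by reducing each of them to the corresponding axiom on $H$ or on one of the $H_{\langle i\rangle}$.

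First I would define $\Delta_{\overline{H}}$ and $\varepsilon_{\overline{H}}$ on the homogeneous pieces: for $h\in H$ set $\Delta_{\overline{H}}(h)=\Delta_H(h)$ and $\varepsilon_{\overline{H}}(h)=\varepsilon_H(h)$, and for $v\in V_{\langle i\rangle}$ set $\Delta_{\overline{H}}(v)=\Delta_{H_{\langle i\rangle}}(v)$ and $\varepsilon_{\overline{H}}(v)=\varepsilon_{H_{\langle i\rangle}}(v)$. Here $\Delta_{H_{\langle i\rangle}}(v)$ lives in $H_{\langle i\rangle}\otimes H_{\langle i\rangle}$, which we view inside $\overline{H}\otimes\overline{H}$ via the linear embedding induced by $H_{\langle i\rangle}=H\oplus V_{\langle i\rangle}\subseteq\overline{H}$; since the tensor product commutes with arbitrary direct sums, this embedding makes sense even when $\Gamma$ is infinite. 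Extending linearly gives well-defined maps $\Delta_{\overline{H}}\colon\overline{H}\to\overline{H}\otimes\overline{H}$ and $\varepsilon_{\overline{H}}\colon\overline{H}\to\Bbbk$.

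The key observation is that $\Delta_{\overline{H}}|_{H_{\langle i\rangle}}=\Delta_{H_{\langle i\rangle}}$ and $\varepsilon_{\overline{H}}|_{H_{\langle i\rangle}}=\varepsilon_{H_{\langle i\rangle}}$ for every $i\in\Gamma$: on $V_{\langle i\rangle}$ this holds by definition, and on $H$ it holds precisely because the inclusion $H\hookrightarrow H_{\langle i\rangle}$ is assumed to be a coalgebra map, so $\Delta_{H_{\langle i\rangle}}|_H=\Delta_H=\Delta_{\overline{H}}|_H$ and likewise for the counit. Granting this, coassociativity and the counit axiom for $\overline{H}$ are checked on the two kinds of homogeneous elements. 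On $h\in H$ they are exactly coassociativity and the counit property of $(H,\Delta_H,\varepsilon_H)$. On $v\in V_{\langle i\rangle}$ we have $\Delta_{\overline{H}}(v)\in H_{\langle i\rangle}\otimes H_{\langle i\rangle}$, so applying $\Delta_{\overline{H}}\otimes\mathrm{id}$, $\mathrm{id}\otimes\Delta_{\overline{H}}$, $\varepsilon_{\overline{H}}\otimes\mathrm{id}$ or $\mathrm{id}\otimes\varepsilon_{\overline{H}}$ to $\Delta_{\overline{H}}(v)$ only uses $\Delta_{\overline{H}}$ and $\varepsilon_{\overline{H}}$ restricted to $H_{\langle i\rangle}$, which by the key observation agree with $\Delta_{H_{\langle i\rangle}}$ and $\varepsilon_{H_{\langle i\rangle}}$; hence these identities reduce to coassociativity and the counit axiom in the coalgebra $H_{\langle i\rangle}$. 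Thus $(\overline{H},\Delta_{\overline{H}},\varepsilon_{\overline{H}})$ is a coalgebra, and the key observation says precisely that each inclusion $H_{\langle i\rangle}\hookrightarrow\overline{H}$ is a coalgebra map, which is the assertion.

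The only point requiring genuine care — and the step I would flag as the main, if minor, obstacle — is ensuring that the various extensions $H_{\langle i\rangle}$ do not impose conflicting comultiplications on the common subspace $H$; this is exactly what the hypothesis that each $H\hookrightarrow H_{\langle i\rangle}$ is a coalgebra map guarantees, forcing all of them to restrict to $\Delta_H$ and $\varepsilon_H$. Everything else is routine bookkeeping about how $\otimes$ interacts with the direct-sum decompositions of $\overline{H}$ and of each $H_{\langle i\rangle}$.
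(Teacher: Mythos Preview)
Your proof is correct and follows exactly the same construction as the paper: define $\overline{\Delta}$ and $\overline{\varepsilon}$ piecewise on $H$ and on each $V_{\langle i\rangle}$ using the structure maps of $H$ and of $H_{\langle i\rangle}$, then observe this makes each $H_{\langle i\rangle}$ a subcoalgebra. The only difference is that the paper dismisses the verification of the coalgebra axioms with ``obviously'', whereas you spell out why the restriction to each $H_{\langle i\rangle}$ recovers $\Delta_{H_{\langle i\rangle}}$ and how coassociativity and the counit axiom then reduce to those of $H_{\langle i\rangle}$.
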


\begin{proof}
Since $(H,\Delta,\varepsilon)$, $(H_{\langle i\rangle},\Delta_i,\varepsilon_i)$ ($i\in\Gamma$) are all coalgebras, we define linear maps $\overline{\Delta}$ and $\overline{\varepsilon}$ on $\overline{H}$ to be $\Delta_i$ and $\varepsilon_i$ respectively when they are restricted on $V_{\langle i\rangle}$, and $\overline{\Delta}|_H=\Delta,\overline{\varepsilon}|_H=\varepsilon$. Then $(\overline{H},\overline{\Delta},\overline{\varepsilon})$ is obviously a coalgebra. Clearly, $H_{\langle i\rangle}$ is a subcoalgebra of $\overline{H}$.
\end{proof}

From now on,  $H$ will be a finite-dimensional pointed ${\Bbbk}$-coalgebra, and we let $G(H)$ to denote all group-like elements of $H$. Take a family of coradical orthonormal idempotents  $\{e_g\}_{g\in G(H)}$ of $H^\ast$.

\begin{proposition}\label{11}
Let $H$ be a finite-dimensional pointed ${\Bbbk}$-coalgebra. Then for any positive integer $n$, if $g,h\in G(H)$ and $z\in \left({}^g H_n{}^h\right)^+$, then
$$
\Delta(z)
= g\otimes z + \sum\limits_{i=1}^{n-1} \sum\limits_{j\in J_i} \sum\limits_{k\in G(H)} x_{i,j}^{(k)}\otimes y_{i,j}^{(k)}+ z\otimes h ,
$$
where  $J_i$ is a finite set for each $i$,  $x_{i,j}^{(k)}\in\left({}^g H_i{}^k\right)^+ ,\ y_{i,j}^{(k)}\in \left( {}^k H_{n-i}{}^h \right)^+$ for each $1\leq i\leq n-1$, $j\in J_i$ and $k\in G(H)$.
\end{proposition}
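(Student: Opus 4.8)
The plan is to reconstruct $\Delta(z)$ from three ingredients: the standard coradical filtration estimate $\Delta(H_n)\subseteq\sum_{i=0}^{n}H_i\otimes H_{n-i}$, the decomposition of Proposition~\ref{2}, and an orthogonality identity forcing the intermediate group-like element to match. Since $H$ is finite-dimensional, $G(H)$ is a finite set, so $\sum_{k\in G(H)}e_k=\varepsilon$ is a finite sum; write $\iota_{a,b}\colon H\to H$, $\iota_{a,b}(w)={}^{a}w^{b}$, for the coordinate projection onto ${}^{a}H^{b}$ (these are commuting idempotents summing to $\mathrm{id}_H$ by Proposition~\ref{2}(4)), and recall that $z\in{}^{g}H_n{}^{h}$ means $\iota_{g,h}(z)=z$.

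The heart of the argument is the operator $\Pi:=\sum_{k\in G(H)}\iota_{g,k}\otimes\iota_{k,h}$ on $H\otimes H$. A short computation in Sweedler notation on $\Delta(w)=\sum_w w_{(1)}\otimes w_{(2)}$ — in which the two copies of $\langle e_k,-\rangle$ inserted between $w_{(1)}$ and $w_{(2)}$ merge to one copy by $e_k^{\,2}=e_k$ and then disappear into $\varepsilon$ on summing over $k$ — shows that $\Pi\circ\Delta=\Delta\circ\iota_{g,h}$, hence $\Pi(\Delta(z))=\Delta(z)$. Now use the filtration estimate: write $\Delta(z)=\sum_{i=0}^{n}\omega_i$ with $\omega_i\in H_i\otimes H_{n-i}$. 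Because each $H_i$ is a subcoalgebra, it is stable under $\rightharpoonup$ and $\leftharpoonup$, so $\iota_{g,k}(H_i)\subseteq{}^{g}H_i{}^{k}$ and $\iota_{k,h}(H_{n-i})\subseteq{}^{k}H_{n-i}{}^{h}$; therefore
$$\Delta(z)=\Pi(\Delta(z))=\sum_{i=0}^{n}\Pi(\omega_i)\in\sum_{i=0}^{n}\ \sum_{k\in G(H)}{}^{g}H_i{}^{k}\otimes{}^{k}H_{n-i}{}^{h},$$
so $\Delta(z)=\sum_{i=0}^{n}\sum_{k\in G(H)}\xi_{i,k}$ with $\xi_{i,k}\in{}^{g}H_i{}^{k}\otimes{}^{k}H_{n-i}{}^{h}$.

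It remains to isolate the boundary and normalize the middle. By Proposition~\ref{2}(1), ${}^{g}H_0{}^{k}=\delta_{g,k}\,\Bbbk g$ and ${}^{k}H_0{}^{h}=\delta_{k,h}\,\Bbbk h$, so the $i=0$ part of the sum is $g\otimes v$ for some $v\in{}^{g}H_n{}^{h}$ and the $i=n$ part is $u\otimes h$ for some $u\in{}^{g}H_n{}^{h}$. For $1\le i\le n-1$, expand each $\xi_{i,k}$ as a finite sum of simple tensors and split each factor into its $\varepsilon$-value times the relevant group-like plus a ``$+$'' remainder: by Proposition~\ref{2}(3) the group-like part of the left factor vanishes unless $k=g$, and that of the right factor vanishes unless $k=h$; when $k=g$ (resp.\ $k=h$) the group-like contribution is a simple tensor $g\otimes(\,\cdot\,)$ with second entry in ${}^{g}H_{n-i}{}^{h}\subseteq{}^{g}H_n{}^{h}$ (resp.\ $(\,\cdot\,)\otimes h$ with first entry in ${}^{g}H_i{}^{h}\subseteq{}^{g}H_n{}^{h}$), which I absorb into the $i=0$ (resp.\ $i=n$) boundary term. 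After this rearrangement,
$$\Delta(z)=g\otimes\widetilde v+\sum_{i=1}^{n-1}\ \sum_{j\in J_i}\ \sum_{k\in G(H)}x_{i,j}^{(k)}\otimes y_{i,j}^{(k)}+\widetilde u\otimes h,$$
with $x_{i,j}^{(k)}\in({}^{g}H_i{}^{k})^{+}$, $y_{i,j}^{(k)}\in({}^{k}H_{n-i}{}^{h})^{+}$, each $J_i$ finite, and $\widetilde u,\widetilde v\in{}^{g}H_n{}^{h}$.

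Finally, apply $\varepsilon\otimes\mathrm{id}$ and $\mathrm{id}\otimes\varepsilon$ to the last display; the middle terms die since their factors lie in kernels of $\varepsilon$, leaving $z=\widetilde v+\varepsilon(\widetilde u)h$ and $z=\varepsilon(\widetilde v)g+\widetilde u$. If $g\ne h$, then $\varepsilon$ vanishes on ${}^{g}H_n{}^{h}$ by Proposition~\ref{2}(3), so $\widetilde v=z=\widetilde u$ and the desired formula follows. If $g=h$, apply $\varepsilon$ once more to get $\varepsilon(\widetilde v)=-\varepsilon(\widetilde u)=:c$; then $\widetilde v=z+cg$ and $\widetilde u=z-cg$, and substituting these back the two terms $\pm\,cg\otimes g$ cancel, again yielding $\Delta(z)=g\otimes z+(\text{middle})+z\otimes h$. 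The step I expect to be the main obstacle is the identity $\Pi\circ\Delta=\Delta\circ\iota_{g,h}$, i.e.\ verifying that no ``off-diagonal'' intermediate group-like occurs — this is exactly where the idempotency and orthogonality of the $e_k$ enter — together with the careful $\varepsilon$-bookkeeping (especially in the $g=h$ case); the rest is routine given Proposition~\ref{2} and the basic properties of the coradical filtration.
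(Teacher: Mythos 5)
Your proposal is correct and follows essentially the same route as the paper: inserting $\sum_{k}e_k=\varepsilon$ between the two tensor legs of $\Delta(z)$ (your operator identity $\Pi\circ\Delta=\Delta\circ\iota_{g,h}$ is exactly the paper's computation $\Delta(z)=\sum_z\sum_k{}^g z_{(1)}{}^k\otimes{}^k z_{(2)}{}^h$), combining this with the coradical filtration, projecting the middle factors onto the augmentation-ideal parts while absorbing the group-like pieces into the boundary terms, and finally using $(\varepsilon\otimes\mathrm{id})$, $(\mathrm{id}\otimes\varepsilon)$ and $\varepsilon(z)=0$ to identify the boundary terms with $z$. The only cosmetic differences are the operator packaging of $\Pi$ and your case split $g\neq h$ versus $g=h$ where the paper simply notes $\varepsilon(\widetilde z')+\varepsilon(\widetilde z'')=(\varepsilon\otimes\varepsilon)\Delta(z)=0$.
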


\begin{proof}
For  $z\in {}^g H_n{}^h$, $z={}^g z^h=\sum\limits_z \langle e_g,z_{(1)} \rangle z_{(2)} \langle e_h,z_{(3)} \rangle$. Since $\sum\limits_{k\in G(H)}e_k=\varepsilon$,
\begin{eqnarray*}
\Delta(z)  &
=& \sum\limits_z \langle e_g,z_{(1)} \rangle z_{(2)}\otimes z_{(3)} \langle e_h,z_{(4)} \rangle  \\
  &
= &\sum\limits_z \sum\limits_{k\in G(H)} \langle e_g,z_{(1)} \rangle z_{(2)} \langle e_k,z_{(3)} \rangle
\otimes \langle e_k,z_{(4)} \rangle z_{(5)} \langle e_h,z_{(6)} \rangle  \\
  &
= &\sum\limits_z \sum\limits_{k\in G(H)} {}^g z_{(1)}{}^k \otimes {}^k z_{(2)}{}^h.  \\
\end{eqnarray*}
It turns out that
$$
\Delta({}^g H_n{}^h)
\subseteq \sum\limits_{i=0}^n \sum\limits_{k\in G(H)} {}^g H_i{}^k \otimes{}^k H_{n-i}{}^h
=g \otimes H_n{}^h + \sum\limits_{i=1}^{n-1} \sum\limits_{k\in G(H)} {}^g H_i{}^k \otimes{}^k H_{n-i}{}^h + {}^g H_n \otimes h.
$$
Therefore we can write
$$
\Delta(z) = g\otimes z^\prime + \sum\limits_{i=1}^{n-1} \sum\limits_{j\in J_i} \sum\limits_{k\in G(H)} x_{i,j}^{(k)}\otimes y_{i,j}^{(k)}+ z^{\prime\prime}\otimes h,
$$
where $J_i$ is a finite set, and  $x_{i,j}^{(k)}\in{}^g H_i{}^k ,\ y_{i,j}^{(k)}\in {}^k H_{n-i}{}^h$ for all $1\leq i\leq n-1$, $j\in J_i$ and $k\in G(H)$.

On the other hand, it is straightforward to show that ${}^g H_n{}^g={\Bbbk} g\oplus\left({}^g H_n{}^g\right)^+$ for each $g\in G(H)$, and ${}^g H_n{}^h=\left({}^g H_n{}^h\right)^+$ if $g\neq h$. Denote ${^+}=id-g\,\delta_{g,h}\,\varepsilon:{}{}^g H_i{}^h\twoheadrightarrow\left( {}^g H_i{}^h\right)^+$ to be the ${\Bbbk}$-linear projection.

Then
\begin{eqnarray}
\Delta(z)
& =& g\otimes z^\prime + \sum\limits_{i=1}^{n-1}\sum\limits_{j\in J_i}\sum\limits_{k\in G(H)}
      x_{i,j}^{(k)} \otimes y_{i,j}^{(k)} + z^{\prime\prime}\otimes h \label{E3} \\
 &=& g\otimes z^\prime + \sum\limits_{i=1}^{n-1}\sum\limits_{j\in J_i}\sum\limits_{k\in G(H)}
      \left[ \left( x_{i,j}^{(k)} \right)^+ + \varepsilon(x_{i,j}^{(k)})g \right]
      \otimes
      \left[ \left( y_{i,j}^{(k)} \right)^+ + \varepsilon(y_{i,j}^{(k)})h \right]
     + z^{\prime\prime}\otimes h.  \nonumber
\end{eqnarray}

Now if we set
\begin{eqnarray*}
\widetilde{z}^\prime
&=& z^\prime + \sum\limits_{i=1}^{n-1}\sum\limits_{j\in J_i}\sum\limits_{k\in G(H)}
  \varepsilon(x_{i,j}^{(k)})
  \left[ \left( y_{i,j}^{(k)} \right)^+ + \varepsilon(y_{i,j}^{(k)})h \right]
  \in H_n ,  \\
\widetilde{z}^{\prime\prime}
&=& z^{\prime\prime} + \sum\limits_{i=1}^{n-1}\sum\limits_{j\in J_i}\sum\limits_{k\in G(H)}
  \left( x_{i,j}^{(k)} \right)^+ \varepsilon(y_{i,j}^{(k)}) \in H_n ~,
\end{eqnarray*}
then Equation \eqref{E3} becomes
\begin{equation}\label{E4}
\Delta(z)
 = g\otimes \widetilde{z}'
   + \sum\limits_{i=1}^{n-1}\sum\limits_{j\in J_i}\sum\limits_{k\in G(H)}
       \left( x_{i,j}^{(k)} \right)^+
         \otimes
       \left( y_{i,j}^{(k)} \right)^+
   + \widetilde{z}''\otimes h .
\end{equation}
Now we let $z\in\left({^g}H_n^{~h}\right)^+$. By applying $\varepsilon\otimes id$ and $id\otimes \varepsilon$ respectively, we obtain
$\widetilde{z}^\prime=z-\varepsilon(\widetilde{z}^{\prime\prime})h$ and $\widetilde{z}^{\prime\prime}=z-\varepsilon(\widetilde{z}^\prime)g$. So
$$
\Delta(z)
 = g\otimes z
   + \sum\limits_{i=1}^{n-1}\sum\limits_{j\in J_i}\sum\limits_{k\in G(H)}
       \left( x_{i,j}^{(k)} \right)^+
         \otimes
       \left( y_{i,j}^{(k)} \right)^+
   + z\otimes h
   - [\varepsilon(\widetilde{z}^\prime)+\varepsilon(\widetilde{z}^{\prime\prime})] g\otimes h .
$$
Note that $\varepsilon(\widetilde{z}^\prime)+\varepsilon(\widetilde{z}^{\prime\prime})=(\varepsilon\otimes\varepsilon)\circ\Delta(z)=0$. This finishes the proof.
\end{proof}

As usual, for $g,h\in G(H)$, we let $P_{g,h}(H)=\{w\in H \mid \Delta(w)=g\otimes w+w\otimes h\}$ denote the set of all $g,h$-primitive elements in $H$.

\begin{definition}
Let $H$ be a coalgebra and $g,h\in G(H)$. Define a $(g,h)$-upper triangular multiplicative matrix (over $H$) to be an upper triangular multiplicative matrix in which the top left entry is $g$ and the bottom right entry is $h$.
\end{definition}

Now for a given $z\in\left({}^g H_n{}^h\right)^+$, we will embed $(H,\Delta_H,\varepsilon_H)$ into a larger coalgebra $H_{\langle n\rangle}$, in which $z$ can be written as a sum of finite number of elements,  with each of them being the $(1,n+1)$-entry of a $(g,h)$-upper triangular matrix over $H_{\langle n\rangle}$. Since $\left({}^g H_0{}^h\right)^+=0$ for all $g,h\in G(H)$, only consider the case when $n\geq 1$. We begin with the case $n=1,2$.

\begin{definition}\label{12}
Assume that $g,h\in G(H)$.
\begin{enumerate}
\item For $z\in\left({}^g H_1{}^h\right)^+$, we let
    \begin{enumerate}[(i)]
    \item $V_{\langle 1\rangle}\triangleq 0$. We call it the $1^{st}$ $(g,h)$-extended space with $z$;
    \item $H_{\langle 1\rangle}\triangleq H$. We call it the  $1^{st}$  $(g,h)$-extended coalgebra with $z$ (of $H$).
    \end{enumerate}
\item For $z\in\left({}^g H_2{}^h\right)^+$, as in Proposition \ref{11}, we write
$$
\Delta(z)= g\otimes z + \sum\limits_{j\in J_1} \sum\limits_{k\in G(H)} x_{1,j}^{(k)}\otimes y_{1,j}^{(k)}+ z\otimes h ,
$$
where $J_1$ is a finite set, and each $x_{1,j}^{(k)}\in\left({}^g H_1{}^k\right)^+ ,y_{1,j}^{(k)}\in \left({}^k H_1{}^h\right)^+$ for all $j\in J_1$ and $k\in G(H)$.

Then we let
    \begin{enumerate}[(i)]
    \item $V_{\langle 2\rangle}$ be a vector space with basis $\{z_{1,j}^{(k)} \mid j\in J_1,k\in G(H)\}$.
        We call it the $2^{nd}$ $(g,h)$-extended space with $z$;
    \item $H_{\langle 2\rangle}\triangleq H\oplus V_{\langle 2\rangle}$, on which the coalgebra structure is defined by
        $\Delta|_H=\Delta_H$, $\varepsilon|_H=\varepsilon_H$, and
        $$
        \left\{
          \begin{array}{l}
            \Delta(z_{1,j}^{(k)})= g\otimes z_{1,j}^{(k)} + x_{1,j}^{(k)}\otimes y_{1,j}^{(k)}+ z_{1,j}^{(k)}\otimes h   \\
            \varepsilon(z_{1,j}^{(k)})=0
        \end{array}
        \right.
        (\forall j\in J_1,k\in G(H)).
        $$
        We call $H_{\langle 2\rangle}$ the $2^{nd}$  $(g,h)$-extended coalgebra with $z$.
    \end{enumerate}
\end{enumerate}
\end{definition}

Some straightforward property of $H_{\langle 2\rangle}$ should be noted.

\begin{proposition}\label{16}
For any $g,h\in G(H)$ and $z\in \left({}^g H_2{}^h\right)^+$, we let $V_{\langle 2\rangle}$ and $H_{\langle 2\rangle}=H\oplus V_{\langle 2\rangle}$ be the $2^{nd}$ $(g,h)$-extended space and coalgebra with $z$ respectively. Then
\begin{enumerate}
\item $(H_{\langle 2\rangle},\Delta,\varepsilon)$ is  a coalgebra containing $H$, with its coradical being $H_0$;
\item The finite set $\{z_{1,j}^{(k)} \mid j\in J_1,k\in G(H)\}\subseteq V_{\langle 2\rangle}$ consists of elements, each of which is the $(1,3)$-entry of a $(g,h)$-upper triangular multiplicative matrix (over $H_{\langle 2\rangle}$) of order $3$, and they satisfy
    $$
    z-\sum\limits_{j\in J_1,k\in G(H)} z_{1,j}^{(k)}\in P_{g,h}(H_{{\langle 2\rangle}}).
    $$
\end{enumerate}
\end{proposition}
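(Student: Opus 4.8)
The plan is to verify both assertions by direct computation: part~(1) amounts to checking the coalgebra axioms on the new generators $z_{1,j}^{(k)}$ and then producing a coalgebra filtration of $H_{\langle 2\rangle}$ whose bottom term is $H_0$, while part~(2) is just a matter of writing down the relevant $3\times 3$ matrices and reading off their comultiplication entrywise, plus one final cancellation.

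For the coalgebra structure in~(1), coassociativity and the counit identity are inherited on the subcoalgebra $H$ (where $\Delta$ and $\varepsilon$ agree with $\Delta_H,\varepsilon_H$), so only the basis vectors $z_{1,j}^{(k)}$ of $V_{\langle 2\rangle}$ need attention. I would use that, by Proposition~\ref{11} applied with $n=1$, the elements $x_{1,j}^{(k)}\in\left({}^g H_1{}^k\right)^+$ and $y_{1,j}^{(k)}\in\left({}^k H_1{}^h\right)^+$ are skew-primitive: $\Delta(x_{1,j}^{(k)})=g\otimes x_{1,j}^{(k)}+x_{1,j}^{(k)}\otimes k$ and $\Delta(y_{1,j}^{(k)})=k\otimes y_{1,j}^{(k)}+y_{1,j}^{(k)}\otimes h$. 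Applying $\Delta\otimes id$ and $id\otimes\Delta$ to the defining formula for $\Delta(z_{1,j}^{(k)})$ then yields the same element of $H_{\langle 2\rangle}^{\otimes 3}$ on each side, the only nontrivial matching being the term $x_{1,j}^{(k)}\otimes k\otimes y_{1,j}^{(k)}$; the counit axiom is immediate from $\varepsilon(x_{1,j}^{(k)})=\varepsilon(y_{1,j}^{(k)})=\varepsilon(z_{1,j}^{(k)})=0$. For the coradical I would take the filtration $F_0=H_0$, $F_1=H_1$, $F_m=H_m+V_{\langle 2\rangle}$ for $m\geq 2$: the coradical filtration of $H$ gives $\Delta(H_m)\subseteq\sum_{i=0}^m F_i\otimes F_{m-i}$, and $\Delta(z_{1,j}^{(k)})\in F_0\otimes F_2+F_1\otimes F_1+F_2\otimes F_0$ since $g,h\in H_0$, $x_{1,j}^{(k)},y_{1,j}^{(k)}\in H_1$ and $z_{1,j}^{(k)}\in F_2$. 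Hence $\{F_m\}$ is a coalgebra filtration, so $(H_{\langle 2\rangle})_0\subseteq F_0=H_0$; the reverse inclusion holds because $H_0$ is a cosemisimple subcoalgebra of $H_{\langle 2\rangle}$, giving $(H_{\langle 2\rangle})_0=H_0$.

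For~(2), to each pair $(j,k)$ I attach the matrix
$$
\mathcal{G}_{j,k}=
\left(
\begin{array}{ccc}
g & x_{1,j}^{(k)} & z_{1,j}^{(k)} \\
0 & k & y_{1,j}^{(k)} \\
0 & 0 & h
\end{array}
\right).
$$
Its counit is $I_3$ because the diagonal entries are group-like and the strictly upper entries lie in ${\rm Ker}~\varepsilon$, and $\Delta(\mathcal{G}_{j,k})=\mathcal{G}_{j,k}~\widetilde{\otimes}~\mathcal{G}_{j,k}$ reduces entrywise exactly to $\Delta(g)=g\otimes g$ (and its analogues for $k$ and $h$), the two skew-primitivity relations above, and the defining formula for $\Delta(z_{1,j}^{(k)})$ — so $\mathcal{G}_{j,k}$ is a $(g,h)$-upper triangular multiplicative matrix of order $3$ with $z_{1,j}^{(k)}$ as its $(1,3)$-entry. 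Finally, subtracting $\Delta\bigl(\sum_{j,k}z_{1,j}^{(k)}\bigr)$ from the expansion of $\Delta(z)$ furnished by Proposition~\ref{11} cancels every middle term $x_{1,j}^{(k)}\otimes y_{1,j}^{(k)}$, leaving $\Delta\bigl(z-\sum_{j,k}z_{1,j}^{(k)}\bigr)=g\otimes\bigl(z-\sum_{j,k}z_{1,j}^{(k)}\bigr)+\bigl(z-\sum_{j,k}z_{1,j}^{(k)}\bigr)\otimes h$, i.e. $z-\sum_{j,k}z_{1,j}^{(k)}\in P_{g,h}(H_{\langle 2\rangle})$.

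The one step requiring real care is the coradical claim in~(1): everything else is bookkeeping, but one must be sure that enlarging $H$ by $V_{\langle 2\rangle}$ creates no new simple subcoalgebras, and this is exactly what the filtration is designed to guarantee — the key point being that the genuinely new piece $x_{1,j}^{(k)}\otimes y_{1,j}^{(k)}$ of $\Delta(z_{1,j}^{(k)})$ already lies in $H_1\otimes H_1\subseteq F_1\otimes F_1$, so it does not force $z_{1,j}^{(k)}$ into $F_1$.
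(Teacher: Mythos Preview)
Your proof is correct and follows essentially the same approach as the paper: both verify the coalgebra axioms via the multiplicativity of the $3\times 3$ matrices $\mathcal{Z}_{1,j}^{(k)}$ (you check coassociativity on $z_{1,j}^{(k)}$ directly, the paper packages the identical computation as $\Delta(\mathcal{Z})=\mathcal{Z}\,\widetilde{\otimes}\,\mathcal{Z}\Rightarrow(\Delta\otimes id)\Delta(\mathcal{Z})=(id\otimes\Delta)\Delta(\mathcal{Z})$), and both treat part~(2) as a straightforward calculation. The one place you add real content is the coradical claim: the paper simply asserts ``it is apparent that $(H_{\langle 2\rangle})_0=H_0$,'' whereas you supply the coalgebra-filtration argument that actually justifies it.
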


\begin{proof}\hspace{1ex}\\
\begin{enumerate}
\item
The definition of $(H_{\langle 2\rangle},\Delta,\varepsilon)$ induces multiplicative matrices
$$
\mathcal{Z}_{1,j}^{(k)}
=\left(
\begin{array}{ccc}
  g  &  x_{1,j}^{(k)}  &  z_{1,j}^{(k)}  \\
  0  &  k  &  y_{1,j}^{(k)}  \\
  0  &  0  & h
\end{array}
\right)
\in\mathcal{M}_3(H_{\langle 2\rangle})
$$
for all $j\in J_1$ and $k\in G(H)$, this implies that
$$
\left\{
  \begin{array}{ll}
    (\Delta\otimes id)\circ\Delta(\mathcal{Z}_{1,j}^{(k)})
    =\mathcal{Z}_{1,j}^{(k)}~\widetilde{\otimes}~\mathcal{Z}_{1,j}^{(k)}~\widetilde{\otimes}~\mathcal{Z}_{1,j}^{(k)}
    =(id\otimes\Delta)\circ\Delta(\mathcal{Z}_{1,j}^{(k)})   \\
    (\varepsilon\otimes id)\circ\Delta(\mathcal{Z}_{1,j}^{(k)})
    =\mathcal{Z}_{1,j}^{(k)}
    =(id\otimes\varepsilon)\circ\Delta(\mathcal{Z}_{1,j}^{(k)}).
  \end{array}
\right. .
$$
So
$$
\left\{
  \begin{array}{ll}
    (\Delta\otimes id)\circ\Delta(z_{1,j}^{(k)})
    =(id\otimes\Delta)\circ\Delta(z_{1,j}^{(k)})   \\
    (\varepsilon\otimes id)\circ\Delta(z_{1,j}^{(k)})
    =z_{1,j}^{(k)}
    =(id\otimes\varepsilon)\circ\Delta(z_{1,j}^{(k)}),
  \end{array}
\right.
$$ $\forall j\in J_1,k\in G(H)$,
which deduces that the coassociative axiom and counit axiom on $(H_{\langle 2\rangle},\Delta,\varepsilon)$ hold.

It is apparent that $(H_{\langle 2\rangle})_0=H_0$.
\item The equation holds by a straightforward calculation.
\end{enumerate}
\end{proof}

Next we define inductively the $n^{th}$ $(g,h)$-extended spaces and coalgebras with $z\in\left({}^g H_n{}^h\right)^+$ for $n\geq 3$.

\begin{definition}\label{20}
Let $n\geq 3$. Let $(H,\Delta_H,\varepsilon_H)$ be a finite-dimensional pointed coalgebra. For any $g,h\in G(H)$, and $z\in\left({}^g H_n{}^h\right)^+$, as in Proposition \ref{11}, we write
$$
\Delta(z)
= g\otimes z + \sum\limits_{i=1}^{n-1} \sum\limits_{j\in J_i} \sum\limits_{k\in G(H)} x_{i,j}^{(k)}\otimes y_{i,j}^{(k)}+ z\otimes h ,
$$
where each $J_i$ is finite and each $x_{i,j}^{(k)}\in\left({}^g H_i{}^k\right)^+ ,y_{i,j}^{(k)}\in \left({}^k H_{n-i}{}^h\right)^+$ for all $1\leq i\leq n-1$, $j\in J_i$ and $k\in G(H)$. Assume that following spaces have been defined already:
\begin{enumerate}
\item $U_{i,j}^{(k)}$ and $G_{i,j}^{(k)}=H\oplus U_{i,j}^{(k)}$ are the $i^{th}$ $(g,k)$-extended space and coalgebra with $x_{i,j}^{(k)}$ respectively. There exists a finite set $\textswab{X}_{i,j}^{(k)}\subseteq U_{i,j}^{(k)}$ consisting of elements, each of which is the $(1,i+1)$-entry of a $(g,k)$-upper triangular multiplicative matrix (over $G_{i,j}^{(k)}$) of order $i+1$, such that
    $$
    x_{i,j}^{(k)}-\sum\limits_{x\in\textswab{X}_{i,j}^{(k)}} x\in P_{g,k}(G_{i,j}^{(k)});
    $$
\item $V_{i,j}^{(k)}$ and $H_{i,j}^{(k)}=H\oplus V_{i,j}^{(k)}$ are the $(n-i)^{th}$ $(k,h)$-extended space and coalgebra with $y_{i,j}^{(k)}$ respectively. There exists a finite set $\textswab{Y}_{i,j}^{(k)}\subseteq V_{i,j}^{(k)}$ consisting of elements, each of which is the $(1,n-i+1)$-entry of a $(k,h)$-upper triangular multiplicative matrix (over $H_{i,j}^{(k)}$) of order $n-i+1$, such that
    $$
    y_{i,j}^{(k)}-\sum\limits_{y\in\textswab{Y}_{i,j}^{(k)}} y\in P_{k,h}(H_{i,j}^{(k)}).
    $$
\end{enumerate}

Then define:
    \begin{enumerate}[(i)]
    \item
        $$
        V_{\langle n\rangle}=\left[\bigoplus\limits_{1\leq i\leq n-1,j\in J_i,k\in G(H)} \left(U_{i,j}^{(k)}\oplus
        V_{i,j}^{(k)}\right)\right]\oplus\left[\bigoplus\limits_{1\leq i\leq n-1,j\in J_i,k\in
        G(H)}{\Bbbk}(\textswab{X}_{i,j}^{(k)}\times\textswab{Y}_{i,j}^{(k)})\right] .
        $$
        We call it the $n^{th}$ $(g,h)$-extended space with $z$;
    \item $H_{\langle n\rangle}\triangleq H\oplus V_{\langle n\rangle}$, on which the coalgebra structure is defined by
        $\Delta|_{\overline{H}}=\Delta_{\overline{H}}$, $\varepsilon|_{\overline{H}}=\varepsilon_{\overline{H}}$, and
        $$
        \left\{
        \begin{array}{l}
            \Delta(z_{x,y})= g\otimes z_{x,y} + x\otimes y+ z_{x,y}\otimes h   \\
            \varepsilon(z_{x,y})=0
        \end{array}
        \right.
        ~\left(
        \begin{array}{c}
            \forall 1\leq i\leq n-1,j\in J_i,k\in G(H),  \\
            \forall z_{x,y}\triangleq (x,y)\in\textswab{X}_{i,j}^{(k)}\times\textswab{Y}_{i,j}^{(k)}
        \end{array}
        \right),
        $$
        where
        $$
        \overline{H}\triangleq H\oplus\left[\bigoplus\limits_{1\leq i\leq n-1,j\in J_i,k\in G(H)} \left(U_{i,j}^{(k)}\oplus V_{i,j}^{(k)}\right)\right]
        $$
        is constructed because $H$ is the subcoalgebra of each $G_{i,j}^{(k)}$ and $H_{i,j}^{(k)}$ by Lemma \ref{18}.
        We call $H_{\langle n\rangle}$ the $n^{th}$ $(g,h)$-extended coalgebra with $z$.
    \end{enumerate}
\end{definition}

The $n^{th }$ $(g,h)$-extended coalgebra has similar properties to the $2^{nd}$ $(g,h)$-extended coalgebra.

\begin{lemma}\label{13}
For any positive integer $n\geq 2$, $g,h\in G(H)$ and any $z\in\left({}^g H_n{}^h\right)^+$, let $V_{\langle n\rangle}$ and $H_{\langle n\rangle}=H\oplus V_{\langle n\rangle}$ be the $n^{th}$ $(g,h)$-extended space and coalgebra with $z$ respectively. Then
\begin{enumerate}
\item $(H_{\langle n\rangle},\Delta,\varepsilon)$ is indeed a coalgebra containing $H$, and its coradical is $H_0$;
\item $z-\sum\limits_{1\leq i\leq n-1,j\in J_i,k\in G(H)}\sum\limits_{x\in\textswab{X}_{i,j}^{(k)},y\in\textswab{Y}_{i,j}^{(k)}} z_{x,y}\in P_{g,h}(H_{\langle n\rangle})$.
\item There exists a finite set $\textswab{Z}\subseteq H_{\langle n\rangle}$ such that every element in $\textswab{Z}$
    is the $(1,n+1)$-entry of a $(g,h)$-upper triangular multiplicative matrix (over $H_{\langle n\rangle}$) of order $n+1$, and the sum of them equals to
    $z$.
\end{enumerate}
\end{lemma}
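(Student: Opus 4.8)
The three assertions will be proved simultaneously by induction on $n\geq 2$, in tandem with Definition \ref{20}: for $n\geq 3$ that definition already invokes the $i^{th}$ and $(n-i)^{th}$ extended coalgebras (with $1\leq i\leq n-1$, so $i<n$ and $n-i<n$) together with their finite sets $\textswab{X}_{i,j}^{(k)}$, $\textswab{Y}_{i,j}^{(k)}$, their witnessing multiplicative matrices, and the ``primitive modulo corners'' relations, all of which are supplied by the induction hypothesis. The base case $n=2$ is exactly Proposition \ref{16}, and $n=1$ is immediate: $H_{\langle 1\rangle}=H$, $z\in P_{g,h}(H)$, so $z$ is the $(1,2)$-entry of the order-$2$ $(g,h)$-upper triangular multiplicative matrix with diagonal $(g,h)$, and $\textswab{Z}=\{z\}$ works.

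So fix $n\geq 3$ and assume the lemma below $n$. For part (1): by the induction hypothesis each $G_{i,j}^{(k)}$ and each $H_{i,j}^{(k)}$ is a coalgebra with coradical $H_0$ containing $H$ as a subcoalgebra, so Lemma \ref{18} makes $\overline H$ a coalgebra containing each of them, hence $H$, as a subcoalgebra. It then remains to check that $\Delta$ and $\varepsilon$ extend coassociatively and counitally over each new basis vector $z_{x,y}$. For this I would realise $z_{x,y}$ as the $(1,n+1)$-entry of an $(n+1)\times(n+1)$ $(g,h)$-upper triangular \emph{multiplicative} matrix $\mathcal{Z}_{x,y}$ over $H_{\langle n\rangle}$: using the induction hypothesis for part (3), pick a $(g,k)$-upper triangular multiplicative matrix $\mathcal{X}$ of order $i+1$ over $G_{i,j}^{(k)}$ with $(1,i+1)$-entry $x$ and a $(k,h)$-upper triangular multiplicative matrix $\mathcal{Y}$ of order $n-i+1$ over $H_{i,j}^{(k)}$ with $(1,n-i+1)$-entry $y$; amalgamate $\mathcal{X}$ and $\mathcal{Y}$ along their common group-like $k$ and fill the resulting ``linking'' block by putting $z_{x,y}$ in its extreme corner and $0$ elsewhere. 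Then, exactly as in the proof of Proposition \ref{16}, the multiplicativity of $\mathcal{Z}_{x,y}$ forces $(\Delta\otimes\mathrm{id})\circ\Delta$ and $(\mathrm{id}\otimes\Delta)\circ\Delta$ to agree on $z_{x,y}$ and the counit axiom to hold there; since these identities already hold on $\overline H$, they hold on all of $H_{\langle n\rangle}$, which is therefore a coalgebra containing $H$. For the coradical, I would assemble a coalgebra filtration of $H_{\langle n\rangle}$ from the coradical filtration of $H$, the (inductively available) coradical filtrations of the $G_{i,j}^{(k)}$ and $H_{i,j}^{(k)}$, and placing each $z_{x,y}$ at level $n$ --- legitimate since $\Delta(z_{x,y})-g\otimes z_{x,y}-z_{x,y}\otimes h=x\otimes y$ sits in $(\text{level }i)\otimes(\text{level }n-i)$ --- whence $(H_{\langle n\rangle})_0\subseteq H_0$, the reverse inclusion being automatic because $H_0$ is cosemisimple.

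For part (2) I would simply compute: Proposition \ref{11} gives $\Delta(z)=g\otimes z+\sum_{i,j,k}x_{i,j}^{(k)}\otimes y_{i,j}^{(k)}+z\otimes h$, while summing the defining relations $\Delta(z_{x,y})=g\otimes z_{x,y}+x\otimes y+z_{x,y}\otimes h$ over all indices gives $\Delta\bigl(\sum z_{x,y}\bigr)$; subtracting and invoking the induction hypothesis for part (2) (each $x_{i,j}^{(k)}$ and $y_{i,j}^{(k)}$ agrees with the sum of its corners modulo $P_{g,k}$, resp.\ $P_{k,h}$, so that the middle tensors of $\Delta(z)$ and of $\Delta\bigl(\sum z_{x,y}\bigr)$ match) yields $\Delta\bigl(z-\sum z_{x,y}\bigr)=g\otimes\bigl(z-\sum z_{x,y}\bigr)+\bigl(z-\sum z_{x,y}\bigr)\otimes h$, i.e.\ $z-\sum z_{x,y}\in P_{g,h}(H_{\langle n\rangle})$. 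For part (3), I would take $\textswab{Z}$ to be the $z_{x,y}$ together with the one extra element $w\triangleq z-\sum z_{x,y}$: by (2), $w$ is $(g,h)$-primitive, hence the $(1,n+1)$-entry of the order-$(n+1)$ $(g,h)$-upper triangular multiplicative matrix with diagonal $(g,\dots,g,h)$, $w$ in the corner and zeros elsewhere; each $z_{x,y}$ is the $(1,n+1)$-entry of $\mathcal{Z}_{x,y}$; and $\sum_{\zeta\in\textswab{Z}}\zeta=\sum z_{x,y}+w=z$. Finiteness of $\textswab{Z}$ is clear since all index sets are finite.

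The one genuinely delicate step is the block-index verification in part (1) that the amalgamated $\mathcal{Z}_{x,y}$ is multiplicative once the linking block is taken as sparsely as possible. This requires carrying along, through the recursion of Definition \ref{20}, a structural property of the inductively built witnessing matrices --- essentially, enough sparseness in their first rows and last columns that the ``linking tensors'' $(\mathcal{X})_{a,i+1}\otimes(\mathcal{Y})_{1,b}$ vanish away from the corner --- and keeping careful track of the $3\times 3$ block decomposition of an $(n+1)\times(n+1)$ matrix glued from an $(i+1)\times(i+1)$ and an $(n-i+1)\times(n-i+1)$ one; the rest, including (2), (3) and the coradical computation, is routine bookkeeping once this is established.
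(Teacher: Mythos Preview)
Your overall strategy is the paper's: build $\mathcal{Z}_{x,y}$ by gluing the inductively given $(g,k)$- and $(k,h)$-upper triangular multiplicative matrices $\mathcal{X},\mathcal{Y}$ along their shared $k$, put $z_{x,y}$ in the extreme corner and zeros elsewhere in the linking block, and read off the coalgebra axioms on $z_{x,y}$ from multiplicativity exactly as in Proposition~\ref{16}. The paper also treats the coradical as ``apparent'' and proves (2) by the same direct subtraction.

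The one genuine difference is in (3). You adjoin the primitive defect $w\triangleq z-\sum z_{x,y}$ as a \emph{new} element of $\textswab{Z}$, realising it as the corner of a matrix with diagonal $(g,\dots,g,h)$. The paper instead keeps $|\textswab{Z}|=|\textswab{Z}'|$: it picks any $w\in\textswab{Z}'$, notes that replacing the $(1,n+1)$-entry of the matrix witnessing $w$ by $z-(z'-w)$ still gives a multiplicative matrix (because $z-z'\in P_{g,h}$ contributes only $g\otimes(\cdot)+(\cdot)\otimes h$ to the comultiplication), and sets $\textswab{Z}=(\textswab{Z}'\setminus\{w\})\cup\{z-(z'-w)\}$. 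Both variants are fine.

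There is a slip in your argument for (2). You invoke ``the induction hypothesis for part (2)'' --- i.e.\ that $x_{i,j}^{(k)}$ agrees with $\sum_{x\in\textswab{X}_{i,j}^{(k)}}x$ only \emph{modulo} $P_{g,k}$, and similarly for $y$ --- and conclude that the middle tensors match. They need not: writing $\sum_x x=x_{i,j}^{(k)}-p$ and $\sum_y y=y_{i,j}^{(k)}-q$ with $p\in P_{g,k}$, $q\in P_{k,h}$, one gets
\[
\Bigl(\textstyle\sum_x x\Bigr)\otimes\Bigl(\textstyle\sum_y y\Bigr)
= x_{i,j}^{(k)}\otimes y_{i,j}^{(k)} - x_{i,j}^{(k)}\otimes q - p\otimes y_{i,j}^{(k)} + p\otimes q,
\]
and the cross terms do not cancel. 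What is actually needed --- and what the paper's one-line computation $\Delta(z')=g\otimes z'+\sum_{i,j,k}x_{i,j}^{(k)}\otimes y_{i,j}^{(k)}+z'\otimes h$ tacitly uses --- is the \emph{exact} equality $\sum_x x=x_{i,j}^{(k)}$, $\sum_y y=y_{i,j}^{(k)}$. That is precisely the content of part (3) at the lower levels, so feed the induction through (3), not (2), when choosing the sets $\textswab{X}_{i,j}^{(k)},\textswab{Y}_{i,j}^{(k)}$ that enter Definition~\ref{20}. As for the ``delicate step'' you flag, the paper simply displays $\mathcal{Z}_{x,y}$ and asserts multiplicativity without further comment; your instinct that this point deserves care is correct.
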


\begin{proof}
Our first goal is to show that every element in the finite set
$$
\textswab{Z}^\prime \triangleq\bigcup\limits_{1\leq i\leq n-1,j\in J_i,k\in G(H)}
  \textswab{X}_{i,j}^{(k)}\times\textswab{Y}_{i,j}^{(k)}\subseteq H_{\langle n\rangle}
$$
is the $(1,n+1)$-entry of a $(g,h)$-upper triangular multiplicative matrix of order $n+1$. By Definition \ref{20}, $\textswab{X}_{i,j}^{(k)}$ and $\textswab{Y}_{i,j}^{(k)}$ consist of elements from the top-right corner of multiplicative matrices. Thus for each $1\leq i\leq n-1$, $j\in J_i$ and $k\in G(H)$, and for each $x\in\textswab{X}_{i,j}^{(k)}$ and $y\in\textswab{Y}_{i,j}^{(k)}$, there are upper triangular matrices
$$
\left(
  \begin{array}{ccc}
    g      & \cdots & x       \\
    \vdots & \ddots & \vdots  \\
    0      & \cdots & k       \\
  \end{array}
\right) \in\mathcal{M}_{i+1}(H_{\langle n\rangle})
,~
\left(
  \begin{array}{ccc}
    k      & \cdots & y       \\
    \vdots & \ddots & \vdots  \\
    0      & \cdots & h       \\
  \end{array}
\right) \in\mathcal{M}_{n-i+1}(H_{\langle n\rangle}).
$$
Note that such two matrices can be put into a $(g,h)$-upper triangular multiplicative matrix
$$
\mathcal{Z}_{x,y}=
\left(
  \begin{array}{ccccc}
    g      & \cdots & x      & \cdots & z_{x,y} \\
    \vdots & \ddots & \vdots & 0      & \vdots  \\
    0      & \cdots & k      & \cdots & y       \\
    \vdots & 0      & \vdots & \ddots & \vdots  \\
    0      & \cdots & 0      & \cdots & h       \\
  \end{array}
\right)\in\mathcal{M}_{n+1}(H_{\langle n\rangle})
~~(x\in \textswab{X}_{i,j}^{(k)},y\in \textswab{Y}_{i,j}^{(k)}).
$$
Hence the statement about $\textswab{Z}^\prime$ is proved.

\begin{enumerate}
\item
Recall in Definition \ref{20} that
$$
H_{\langle n\rangle}\triangleq\overline{H}
\oplus\left[ \bigoplus\limits_{1\leq i\leq n-1,j\in J_i,k\in G(H)}
{\Bbbk}(\textswab{X}_{i,j}^{(k)}\times\textswab{Y}_{i,j}^{(k)})\right] ,
$$
where
$$
\overline{H}\triangleq H\oplus\left[\bigoplus\limits_{1\leq i\leq n-1,j\in J_i,k\in G(H)} \left(U_{i,j}^{(k)}\oplus V_{i,j}^{(k)}\right)\right]
$$
is already a coalgebra. Thus in order to prove $H_{\langle n\rangle}$ is a coalgebra, it is enough to show that coassociative axiom and counit axiom holds on every element in
$
\textswab{Z}'\triangleq\bigcup\limits_{1\leq i\leq n-1,j\in J_i,k\in G(H)}
 \textswab{X}_{i,j}^{(k)}\times\textswab{Y}_{i,j}^{(k)}
$.
The proof is similar to the one of Proposition \ref{16} (1), since every element in the finite set $\textswab{Z}^\prime$ is the $(1,n+1)$-entry of a $(g,h)$-upper triangular multiplicative matrix of order $n+1$.

It is apparent that $(H_{\langle n\rangle})_0=H_0$.

\item
Straightforward by calculations. Let
$$
z^\prime \triangleq\sum\limits_{1\leq i\leq n-1,j\in J_i,k\in G(H)}\sum\limits_{x\in\textswab{X}_{i,j}^{(k)},y\in\textswab{Y}_{i,j}^{(k)}}
z_{x,y} \in H_{\langle n\rangle}
$$
denote the sum of all elements in $\textswab{Z}^\prime$. By Definition \ref{20} that
$$
\Delta(z_{x,y})= g\otimes z_{x,y} + x\otimes y+ z_{x,y}\otimes h,
~\left(
\begin{array}{c}
    \forall 1\leq i\leq n-1,j\in J_i,k\in G(H),  \\
    \forall z_{x,y}\triangleq (x,y)\in\textswab{X}_{i,j}^{(k)}\times\textswab{Y}_{i,j}^{(k)}
\end{array}
\right),
$$
we have
$$
\Delta(z^\prime)=g\otimes z^\prime + \sum\limits_{i=1}^{n-1} \sum\limits_{j\in J_i} \sum\limits_{k\in G(H)} x_{i,j}^{(k)}\otimes y_{i,j}^{(k)}+ z^\prime\otimes h.
$$
So $z-z^\prime \in P_{g,h}(H_{\langle n\rangle})$.

\item
Choose one tuple $(i_0,j_0,k_0)$, satisfying $1\leq i\leq n-1,j\in J_i,k\in G(H)$ and an element $w\in\textswab{X}_{i_0,j_0}^{(k_0)}\times\textswab{Y}_{i_0,j_0}^{(k_0)}$. We claim that the element $z-(z^\prime -w)$ is also the $(1,n+1)$-entry of a $(g,h)$-upper triangular multiplicative matrix over $H_{\langle n\rangle}$ of order $n$.

Assume that $w$ is the $(1,n+1)$-entry of a $(g,h)$-upper triangular multiplicative matrix $\mathcal{W}\triangleq(w_{i\,j})\in\mathcal{M}_{n+1}(H_{\langle n\rangle})$. It follows that $\Delta(w)=g\otimes w+w\otimes h +\sum\limits_{l=1}^n w_{1\,l}\otimes w_{l\,n+1}$. Thus
\begin{eqnarray*}
&&\Delta(z-(z^\prime -w))=\Delta(z-z')+\Delta(w)  \\
&=&g\otimes(z-z^\prime)+(z-z^\prime)\otimes h+g\otimes w+w\otimes h +\sum\limits_{l=1}^n w_{1\,l}\otimes w_{l\,n+1}  \\
&=&g\otimes[z-(z^\prime -w)]+\sum\limits_{l=1}^n w_{1\,l}\otimes w_{l\,n+1}+[z-(z^\prime -w)]\otimes h,
\end{eqnarray*}
and $\varepsilon(z-(z^\prime -w))=0$. That is to say $\mathcal{W}$ will be still multiplicative of if $z-(z^\prime -w)$ is substituted for its $(1,n+1)$-entry $w_{n+1\,n+1}$. Hence $z-(z^\prime -w)$ is also the $(1,n+1)$-entry of a $(g,h)$-upper triangular multiplicative matrix.

Finally set $\textswab{Z}\triangleq(\textswab{Z}^\prime \cup\{z-(z^\prime -w)\})\setminus \{w\}$. Apparently every element of $\textswab{Z}$ is the $(1,n+1)$-entry of a $(g,h)$-upper triangular multiplicative matrix of order $n+1$, and $z=(z^\prime -w)+[z-(z^\prime -w)]$ is the sum of all the elements of $\textswab{Z}$. This completes the proof.
\end{enumerate}
\end{proof}

By Lemma \ref{13}, every $z\in\left({}^g H_n{}^h\right)^+$ is a finite sum of elements in $H_{\langle n\rangle}$, and each of them is the $(1,n+1)$-entry of a $(g,h)$-upper triangular multiplicative matrix. With the help of it and Lemma \ref{17} , we can finally prove the theorem in this section.

Since the coradical of a pointed Hopf algebra $H$ is a group algebra, it has finite exponent dividing the dimension of $H$ (Kashina \cite[Section 1]{Kashina 1999}).

\begin{theorem}
Let $H$ be a finite-dimensional pointed Hopf algebra over a field of characteristic $p>0$. Assume $H=H_n$ for some positive integer $n$. Denote $d=\exp(H_0)<\infty$. Then $\exp(H)\leq d p^{ \lfloor \log_p n \rfloor +1 }$.
\end{theorem}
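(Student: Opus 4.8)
The plan is to show that the $m$-th Hopf power map $[m]$ with $m=dp^{\lfloor\log_p n\rfloor+1}$ equals $u\circ\varepsilon$ on $H$, which is precisely the assertion $\exp(H)\le m$. Since $[m]=m_m\circ\Delta_m$ is a composite of linear maps it is itself linear, so it is enough to check $[m](a)=\varepsilon(a)1$ for $a$ ranging over a spanning set of $H$. By Proposition~\ref{2}(4), $H=H_n=\bigoplus_{g,h\in G(H)}{}^gH_n{}^h$; moreover ${}^gH_n{}^g=\Bbbk g\oplus({}^gH_n{}^g)^+$ and ${}^gH_n{}^h=({}^gH_n{}^h)^+$ when $g\ne h$, so the group-likes of $H$ together with all the spaces $({}^gH_n{}^h)^+$ span $H$. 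For a group-like $g$ one has $g^{[m]}=g^m$ by Proposition~\ref{19}(1), and since $g\in H_0$ and $d=\exp(H_0)$ divides $m$, $g^m=1=\varepsilon(g)1$. So the problem reduces to showing $z^{[m]}=0$ for an arbitrary $z\in({}^gH_n{}^h)^+$.

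Fix such a $z$; I would first treat the case $n\ge 2$. By Lemma~\ref{13}(3) there is a finite set $\textswab{Z}\subseteq H_{\langle n\rangle}$, where $H_{\langle n\rangle}$ is the $n$-th $(g,h)$-extended coalgebra of Definition~\ref{20}, with $z=\sum_{w\in\textswab{Z}}w$ and each $w\in\textswab{Z}$ the $(1,n+1)$-entry of a $(g,h)$-upper triangular multiplicative matrix $\mathcal{W}_w\in\mathcal{M}_{n+1}(H_{\langle n\rangle})$. The diagonal entries of such a matrix are group-like in $H_{\langle n\rangle}$, and because the coradical of $H_{\langle n\rangle}$ is $H_0$ by Lemma~\ref{13}(1), they in fact lie in $G(H)$ and so have $d$-th power $1$. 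Next I would pass to a bialgebra: since $H\subseteq H_{\langle n\rangle}$ is a subcoalgebra, Lemma~\ref{17}(2) applied with $\overline H=H_{\langle n\rangle}$ produces a bialgebra $\widetilde H=T(\overline H)\big/\big[T(\overline H)({\rm Ker}\,\pi_H)T(\overline H)\big]$, a coalgebra map $f=\theta\circ\iota_{\overline H}\colon H_{\langle n\rangle}\to\widetilde H$, and --- via the commuting square of Lemma~\ref{17}(2)(iii) --- an \emph{injective} bialgebra map $j=\phi\circ\varphi\colon H\hookrightarrow\widetilde H$ with $f|_H=j$.

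Applying $f$ entrywise, $f(\mathcal{W}_w)$ is an upper triangular multiplicative matrix of order $n+1$ over the bialgebra $\widetilde H$ whose diagonal entries are the group-likes $f(g_i)=j(g_i)$, satisfying $j(g_i)^d=j(g_i^d)=1$. Proposition~\ref{10}, applied with its ``$n$'' equal to our $n$ so that there are $n+1$ diagonal blocks, now yields $f(\mathcal{W}_w)^{[m]}=f(\mathcal{W}_w)^m=I$; comparing $(1,n+1)$-entries (which, by the entrywise description of Proposition~\ref{19}(1), is where $f(w)^{[m]}$ sits) gives $f(w)^{[m]}=0=\varepsilon(f(w))1$, since $\varepsilon(f(w))=\varepsilon(w)=0$ for the off-diagonal entry $w$. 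Linearity of $[m]$ and $f(z)=\sum_{w\in\textswab{Z}}f(w)$ then give $f(z)^{[m]}=\sum_w f(w)^{[m]}=0$, and since $f(z)=j(z)$ with $j$ an injective bialgebra map, $j(z^{[m]})=j(z)^{[m]}=0$ forces $z^{[m]}=0=\varepsilon(z)1$. The case $n=1$ needs no enlargement: then $z\in({}^gH_1{}^h)^+=P_{g,h}(H)$ is the $(1,2)$-entry of the multiplicative matrix $\left(\begin{smallmatrix}g&z\\0&h\end{smallmatrix}\right)$ over $H$ itself, and Proposition~\ref{10} applies directly to give $z^{[dp]}=0$ with $dp=dp^{\lfloor\log_p 1\rfloor+1}=m$. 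Collecting the cases, $[m]=u\circ\varepsilon$ on $H$, so $\exp(H)\le m$.

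The two substantive ingredients are already available: Lemma~\ref{13}, which writes an arbitrary $z\in({}^gH_n{}^h)^+$ as a finite sum of top-right corners of $(g,h)$-upper triangular multiplicative matrices (at the cost of enlarging $H$ to the coalgebra $H_{\langle n\rangle}$), and Proposition~\ref{10}, which combines the nilpotency of the strictly-upper-triangular part with the characteristic-$p$ Frobenius identity. The main obstacle in the assembly is therefore organizational: one must move from the coalgebra $H_{\langle n\rangle}$, where Hopf powers are not even defined, into a bialgebra in which (i) these matrices remain multiplicative, (ii) the diagonal group-likes retain order dividing $d$, and (iii) the structure map out of $H$ is injective, so that the vanishing of a Hopf power downstairs can be pulled back to $H$. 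This is exactly the purpose of the free-bialgebra apparatus of Lemma~\ref{17}, and the care required is to invoke it with $\overline H=H_{\langle n\rangle}$ and to use its commuting square to identify $f|_H$ with the injective map $j$; everything else is linearity of $[m]$ and bookkeeping with matrix entries.
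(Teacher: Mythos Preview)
Your proof is correct and follows essentially the same route as the paper: reduce to $z\in({}^gH_n{}^h)^+$, invoke Lemma~\ref{13}(3) to write $z$ as a sum of top-right entries of $(g,h)$-upper triangular multiplicative matrices over $H_{\langle n\rangle}$, push everything into the quotient bialgebra of Lemma~\ref{17}(2), and apply Proposition~\ref{10} using that the diagonal group-likes have order dividing $d$. Your explicit treatment of the case $n=1$ and your remark that $[m]$ is linear (so the sum over $\textswab{Z}$ can be handled termwise) make the argument slightly more transparent than the paper's version, but the substance is identical.
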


\begin{proof}
By Proposition \ref{2} (3), $H=H_n=H_0\oplus\left[ \bigoplus\limits_{g,h\in G(H)} \left({}^g H_n{}^h\right)^+ \right]$. Thus it is enough to show that for any $g,h\in G(H)$ and any $z\in\left({}^g H_n{}^h\right)^+$, $z^{\left[ d p^{ \lfloor \log_p n \rfloor +1 }\right]}=0=\varepsilon(z)$ holds.

Let $H_{\langle n\rangle}$ be the $n^{th}$ $(g,h)$-extended coalgebra with $z$. Since $H$ is a subcoalgebra of $H_{\langle n\rangle}$, by Lemma \ref{17} (2), we have the following commutative diagram:
$$
{
\xymatrix{
H  \ar@{^{(}->}[d]_-{\iota_{H_{\langle n\rangle}}|_H}  \ar[r]^-\varphi  &  T(H)/({\rm Ker}~\pi_H)     \ar@{^{(}->}[d]^-\phi  \\
T(H_{\langle n\rangle})  \ar[r]_-\theta   &  T(H_{\langle n\rangle})/[T(H_{\langle n\rangle})({\rm Ker}~\pi_H)T(H_{\langle n\rangle})]   }
},
$$
where $\varphi$ is a bialgebra isomorphism, $\phi$ is a bialgebra monomorphism, $\theta$ is a quotient bialgebra map, and $\iota_{H_{\langle n\rangle}}|_H$ is just a coalgebra inclusion. Recall that $\pi_H:T(H)\rightarrow H, h_1\otimes h_2\otimes\cdots\otimes h_l\mapsto h_1h_2\cdots h_l$ in Lemma \ref{17} (1), where $l\in{\mathbb{ Z}}^+$ and $h_1,h_2,\cdots, h_l\in H$.

In order to show $z^{\left[ d p^{ \lfloor \log_p n \rfloor +1 } \right]}=0=\varepsilon(z)=0$ in $H$, we only need to prove $(\phi\circ\varphi)(z^{\left[ d p^{ \lfloor \log_p n \rfloor +1 } \right]})=0$ in $$T(H_{\langle n\rangle})/[T(H_{\langle n\rangle})({\rm Ker}~\pi_H)T(H_{\langle n\rangle})],$$ since $\phi\circ\varphi$ is injective.

On the other hand, we have
$$
(\phi\circ\varphi)(z^{\left[ d p^{ \lfloor \log_p n \rfloor +1 } \right]})
= (\phi\circ\varphi)(z)^{\left[ d p^{ \lfloor \log_p n \rfloor +1 } \right]}
= [\theta\circ\iota_{H_{\langle n\rangle}}(z)]^{\left[ d p^{ \lfloor \log_p n \rfloor +1 } \right]}
,
$$
because $\phi$, $\varphi$ and $\theta$ are bialgebra maps and the diagram above commutes. Note that $\iota_{H_{\langle n\rangle}}(z)$ is a finite sum of elements, each of which is the $(1,n+1)$-entry of a $(g,h)$-upper triangular multiplicative matrix $\mathcal{Z}$ over $T(H_{\langle n\rangle})$. And so is $\theta\circ\iota_{H_{\langle n\rangle}}(z)$, where the corresponding matrix is $\theta(\mathcal{Z})$, since $\theta$ is a coalgebra map.

It remains to show that each $\mathcal{Z}$ has order dividing $d p^{ \lfloor \log_p n \rfloor +1 }$. By Proposition \ref{10}, it is enough to verify that every element on the diagonal of $\theta(\mathcal{Z})$ has order dividing $d$ in $$T(H_{\langle n\rangle})/[T(H_{\langle n\rangle})({\rm Ker}~\pi_H)T(H_{\langle n\rangle})]. $$ This is because for any $g\in G(H)=G(H_{\langle n\rangle})$, we have
$$
(\theta\circ\iota_{H_{\langle n\rangle}})(g)^d=(\phi\circ\varphi)(g)^d=(\phi\circ\varphi)(g^d)=1.
$$
The proof of the theorem is now complete.
\end{proof}

\section*{Funding}

This work was supported by NNSF of China (No. 11331006).

\end{document}